\numberwithin{equation}{section}
\def\today{\ifcase\month\or Jan\or Febr\or  Mar\or  Apr\or May\or Jun\or  Jul\or Aug\or  Sep\or  Oct\or Nov\or  Dec\or\fi \space\number\day, \number\year}
\newcommand{\CC}{\mathbb C}
\newcommand{\GG}{\mathbb G}
\newcommand{\PP}{\mathbb P}
\newcommand{\QQ}{\mathbb Q}
\newcommand{\RR}{\mathbb R}
\newcommand{\ZZ}{\mathbb Z}
\newtheorem{theorem}{Theorem}[section]
\newtheorem{lemma}[theorem]{Lemma}
\newtheorem{proposition}[theorem]{Proposition}
\newtheorem{corollary}[theorem]{Corollary}
\newtheorem{definition-lemma}[theorem]{Definition-Lemma}
\theoremstyle{definition}
\newtheorem{definition}[theorem]{Definition}
\newtheorem{example}[theorem]{Example}
\theoremstyle{remark}
\newtheorem{remark}[theorem]{Remark}
\begin{document}

\title[]{Generators for Modules of Vector-valued \\
Picard Modular Forms }
\author{Fabien Cl\'ery}
\address{Korteweg-de Vries Instituut, Universiteit van
Amsterdam, Postbus 94248,
1090 GE  Amsterdam, The Netherlands.}
\email{f.l.d.clery@uva.nl}

\author{Gerard van der Geer}
\address{Korteweg-de Vries Instituut, Universiteit van
Amsterdam, Postbus 94248,
1090 GE  Amsterdam, The Netherlands.}
\email{geer@science.uva.nl}

\subjclass{14J15, 10D}
\begin{abstract}
We construct generators for modules of vector-valued Picard modular
forms on a unitary group of type (2,1) over the Eisenstein integers.
We also calculate eigenvalues of Hecke operators acting on cusp forms.
\end{abstract}

\maketitle
\begin{section}{Introduction}
Modular forms on unitary groups have been studied intensively
in the theory of automorphic forms, (cf.\ for example \cite{Rog} and
\cite{LR})
but explicit examples have been scarce. 
Shintani considered vector-valued Picard modular forms in an unpublished 
manuscript \cite{Shintani}; in particular he determined a criterion for such a
modular form to be a Hecke eigenform in terms of the Fourier-Jacobi series.
Explicit (scalar-valued) Picard modular forms were considered 
for $F={\QQ}(\sqrt{-1})$
by Resnikoff and Tai (cf.\ \cite{R-T1,R-T2}) and for
$F={\QQ}(\sqrt{-3})$ by Shiga, Holzapfel, Feustel and Finis, cf.\
\cite{Shiga, Holzapfel1, Feustel, Finis1}; in particular for the latter case
Holzapfel and
Feustel determined a presentation of the ring of
scalar-valued Picard modular forms on the congruence subgroup
$$
\Gamma_1[\sqrt{-3}]=\{ g \in \Gamma_1 : g \equiv 1 (\bmod \sqrt{-3})\},
$$
while Finis computed a number of Hecke eigenvalues for low weight ($\leq 12$)
scalar-valued forms. Vector-valued Picard modular forms have not attracted much
attention so far.

There is another approach to calculating Hecke eigenvalues of modular forms
via the
study of the cohomology of local systems
on moduli spaces of algebraic curves. This approach which uses counting of
points over finite fields, was pioneered 
by Faber and the second author in 
\cite{FvdG} and continued with Bergstr\"om in \cite{BFvdG}. It has provided
a lot of explicit examples and gathered information that led to 
conjectures on vector-valued Siegel modular forms of genus $2$ and $3$. 

The Picard modular surface underlying the work of Feustel, Holzapfel and
Finis can be interpreted as a moduli space of degree three Galois 
covers of genus $3$ of the projective line, cf.\  early work
of Shimura \cite{Shimura1}. Using this interpretation Bergstr\"om and
the second author set out to calculate traces of Hecke operators on spaces
of vector-valued modular forms by calculating the trace of Frobenius
on the \'etale cohomology of local systems over finite fields. This
approach is carried out in \cite{B-vdG}. 
It naturally led to the question of 
constructing the corresponding Picard modular forms directly. 
That is the starting point of the present paper.

The purpose of the paper is to construct vector-valued (eigen)forms on the
Picard modular group in question, that is, the congruence subgroup 
$\Gamma_1[\sqrt{-3}]$. The weight of a modular form is a
pair $(j,k)$, with the case $(0,k)$ corresponding to scalar-valued
modular forms of weight $k$. We will denote by
 $M_{j,k}(\Gamma[\sqrt{-3}],{\rm det}^{\ell})$ 
(resp.\ $S_{j,k}(\Gamma[\sqrt{-3}],{\rm det}^{\ell})$ 
the vector space of modular forms (resp.\ cusp forms) of weight $(j,k)$
with character ${\rm det}^{\ell}$ with $0 \leq \ell \leq 2$ 
(see Section \ref{sec:defs} for precise definitions). 
Then 
$${\mathcal M}_j^{\ell}= \oplus_k M_{j,k}(\Gamma[\sqrt{-3}],{\rm det}^{\ell})
\quad{\rm and} \quad
\Sigma_j^{\ell}=\oplus_k S_{j,k}(\Gamma[\sqrt{-3}],{\rm det}^{\ell})
$$ 
are modules over the ring $M={\mathcal M}_0^0= \oplus M_{0,k}(\Gamma[\sqrt{-3}])$ 
of scalar-valued modular forms. 
Generators for the ring $M$ of scalar-valued modular forms are known 
by work of Feustel and Holzapfel, in fact $M={\CC}[\varphi_0,\varphi_1,\varphi_2]$ with $\varphi_i$ of weight $3$. 
We give a presentation for the $M$-modules ${\mathcal M}_j^{\ell}$ 
and $\Sigma_j^{\ell}$ 
for $j=1$ and $2$. We also give the structure for ${\mathcal M}_3^0$.
To illustrate this we give one example.

\begin{theorem}
The $M$-module ${\mathcal M}_1^0$ is generated by three vector-valued
cusp forms $\Phi_0$, $\Phi_1$ and $\Phi_2$ of weight $(1,7)$ 
satisfying a single relation
$\varphi_0\Phi_0+\varphi_1\Phi_1+\varphi_2\Phi_2=0$. 
\end{theorem}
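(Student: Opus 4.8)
The plan is to obtain the three generators by a differentiation (Rankin--Cohen type) construction, to verify the relation as a formal identity, and to pin down the module by a cohomological dimension count on the compactified Picard modular surface. Recall that on $X=\Gamma[\sqrt{-3}]\backslash\mathbb B^2$ a modular form of weight $(j,k)$ is a section of an automorphic vector bundle built from $\mathrm{Sym}^j$ and powers of $\det$ of the rank-two Hodge bundle $\mathcal E$, and that the holomorphic cotangent bundle $\Omega^1_X$ is likewise built from $\mathcal E$ and $\det\mathcal E$; consequently the exterior derivative of a scalar form $\varphi$ of weight $k$ is a (non-modular, "quasimodular") datum of weight-$(1,k+1)$ type. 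As in the classical Rankin--Cohen calculation, for two scalar forms $\varphi,\psi$ of weights $k_\varphi,k_\psi$ the combination $[\varphi,\psi]:=k_\psi\,\varphi\,\partial\psi-k_\varphi\,\psi\,\partial\varphi$ (with $\partial$ a suitably normalised holomorphic differential) is a genuine vector-valued modular form of weight $(1,k_\varphi+k_\psi+1)$, the offending terms cancelling. I would therefore set $\Phi_0:=[\varphi_1,\varphi_2]$, $\Phi_1:=[\varphi_2,\varphi_0]$, $\Phi_2:=[\varphi_0,\varphi_1]$, all of weight $(1,7)$. These are non-zero since $\varphi_j/\varphi_i$ is a non-constant rational function on $X$ (indeed $X$ has Baily--Borel compactification $\mathbb P^2=\mathrm{Proj}\,M$ by Feustel--Holzapfel \cite{Feustel, Holzapfel1}); pulling back to $\mathbb P^2$, $\Phi_i$ becomes $x_{i+1}\,dx_{i+2}-x_{i+2}\,dx_{i+1}$ (indices modulo $3$), and a one-line computation shows $\Phi_0,\Phi_1,\Phi_2$ are $\CC$-linearly independent. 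That they are cusp forms follows either by inspecting the constant Fourier--Jacobi coefficient at each of the finitely many zero-dimensional cusps, using the known behaviour of the $\varphi_i$ there, or from the absence of Eisenstein series of weight $(1,k)$, which gives $M_{1,k}=S_{1,k}$ for all $k$.

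The relation is a formal identity: in $\varphi_0\Phi_0+\varphi_1\Phi_1+\varphi_2\Phi_2$ the coefficient of $\partial\varphi_0$ is $3\varphi_1\varphi_2-3\varphi_2\varphi_1=0$, and similarly for $\partial\varphi_1,\partial\varphi_2$, so $\varphi_0\Phi_0+\varphi_1\Phi_1+\varphi_2\Phi_2=0$. Hence the $M$-linear map $\alpha\colon M(-7)^3\to\mathcal M_1^0$, $e_i\mapsto\Phi_i$, kills $(\varphi_0,\varphi_1,\varphi_2)$ and factors through $C:=\mathrm{coker}\!\big(M(-10)\xrightarrow{(\varphi_0,\varphi_1,\varphi_2)}M(-7)^3\big)$; since $M=\CC[\varphi_0,\varphi_1,\varphi_2]$ is a polynomial ring, $\varphi_0,\varphi_1,\varphi_2$ is a regular sequence, that last map is injective, and $C$ has Hilbert series $\dfrac{3t^7-t^{10}}{(1-t^3)^3}$. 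It now suffices to prove that the induced map $\bar\alpha\colon C\to\mathcal M_1^0$ is an isomorphism, which by a Hilbert-series comparison reduces to (i) knowing $\dim_\CC M_{1,k}(\Gamma[\sqrt{-3}])$ for all $k$ and (ii) knowing that the $\Phi_i$ generate $\mathcal M_1^0$.

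For both inputs the cleanest route is to identify $\mathcal M_1^0$, as a graded $M$-module, with the module of global sections of the appropriate twist of $\Omega^1$ on $X$; after pinning down the behaviour at the cusps (i.e.\ passing to a toroidal resolution and comparing with $\mathbb P^2=\mathrm{Proj}\,M$), the presentation $0\to M(-10)\to M(-7)^3\to\mathcal M_1^0\to 0$ becomes nothing but the Euler/Koszul sequence $0\to\Omega^1_{\mathbb P^2}\to\mathcal O(-1)^3\xrightarrow{(x_0,x_1,x_2)}\mathcal O\to0$ on $\mathbb P^2$, twisted up and summed over degrees, and the vanishing $H^1(\mathbb P^2,\Omega^1_{\mathbb P^2}(d))=0$ for $d\neq0$ gives exactness on global sections directly — yielding simultaneously surjectivity of $\bar\alpha$ and the absence of further relations, and incidentally re-deriving the dimension formula. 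An alternative route is to import the Euler characteristics $e_c(X,\mathbb V_\lambda)$ of local systems on the Picard modular surface computed by Bergstr\"om--van der Geer \cite{B-vdG}, extract $\dim S_{1,k}$ (the Eisenstein and endoscopic terms being inert in this weight), match the generating series with $\dfrac{3t^7-t^{10}}{(1-t^3)^3}$, and then obtain generation from a bounded-degree check: a Castelnuovo--Mumford regularity bound reduces it to verifying, for finitely many $k$, that the products $\varphi^a\Phi_i$ span $M_{1,k}$, which one does with explicit Fourier--Jacobi expansions of the $\Phi_i$.

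The main obstacle is precisely this control of $\dim M_{1,k}(\Gamma[\sqrt{-3}])$ together with the surjectivity of $\bar\alpha$. If one proceeds via Riemann--Roch and Hirzebruch--Mumford proportionality on a smooth toroidal compactification of $X$ for the specific group $\Gamma[\sqrt{-3}]$, one needs the relevant intersection numbers of $\mathcal E$, $\det\mathcal E$ and the boundary divisor, and a Kodaira-type vanishing theorem valid for $k$ large (the finitely many small weights being settled by hand); the boundary contributions at the cusps — the difference between a toroidal compactification and $\mathbb P^2$ — must be pinned down exactly, and this is the delicate point, shared by the sheaf-theoretic route. Everything else — the construction of the $\Phi_i$, the verification of the relation, and the final bookkeeping — is formal. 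A good sanity check on the whole picture is to compare the first Fourier--Jacobi coefficients and Hecke eigenvalues of the $\Phi_i$ against the point-counting data of \cite{B-vdG}.
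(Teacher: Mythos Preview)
Your construction of the $\Phi_i$ via Rankin--Cohen brackets and the formal verification of the relation $\sum\varphi_i\Phi_i=0$ coincide with the paper's (Section~\ref{sec:Rankin-Cohen} and Section~\ref{examples}). Where you diverge is in controlling the module structure. The paper does not appeal to sheaf theory on $\mathbb{P}^2$ or to Castelnuovo--Mumford regularity; instead it computes the scalar-valued form $\Phi_i\wedge\Phi_j\in S_{15}(\Gamma[\sqrt{-3}],\det^2)$ and identifies it as a nonzero multiple of $\zeta^2\varphi_k$ (relation~(7)). This wedge identity is then used to show directly that the kernel of $M(-7)^3\to\Sigma_1^0$ is a cyclic $M$-module: two $M$-independent relations would, after elimination, yield $h_1\Phi_1+h_2\Phi_2=0$ and hence $\Phi_1\wedge\Phi_2=0$, a contradiction. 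Once the kernel is known to be generated by $(\varphi_0,\varphi_1,\varphi_2)$, the dimension formula from \cite{B-vdG} finishes the argument. So the paper trades your surjectivity input (ii) for an elementary injectivity argument based on wedges, and then gets surjectivity from the dimension count---the dual of your strategy. Your Route~1 via the Euler sequence is attractive and, as you note, would in principle recover the dimension formula for free (indeed $\dim H^0(\Omega^1_{\mathbb{P}^2}(k))=k^2-1$ matches formula~(11) exactly); but the identification of $\Sigma_1^0$ with $\bigoplus_k H^0(\mathbb{P}^2,\Omega^1(k))$ across the cusps is precisely the subtle step the paper sidesteps with the wedge calculation. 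Your Route~2 is logically closer to the paper, with the regularity/finite-check replacing the wedge argument; the paper's method has the advantage of being entirely explicit and of simultaneously producing the useful identity~(7), which is reused later (e.g.\ Corollary~\ref{vanishingPhi} and equation~(8)).
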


We can calculate the Fourier-Jacobi expansions of our forms and 
thus can calculate the Hecke eigenvalues of these modular forms. We give
Hecke eigenvalues for a number of generators. The corresponding Galois 
representations are of dimension $1$, $2$ or $3$.
These results agree with the cohomological results of 
Bergstr\"om and van der Geer
\cite{B-vdG}. The results of \cite{B-vdG} on the $S_4$-equivariant
numerical Euler characteristics of the local systems 
predicted where to look for generators of these modules
of modular forms. 
We hope these results will make Picard modular 
forms more tangible than they have been so far
and that the presence of such explicit examples can help discover new phenomena.

The case of Picard modular forms over the Gaussian integers
will be treated in a forthcoming publication;  also we intend to treat
cases of modular forms  on higher rank unitary groups.

\smallskip
\noindent
{\bf Acknowledgement} The results of this paper would not have been possible
without the guidance from the cohomological results 
of Jonas Bergstr\"om and the second author. 
Unfortunately, the preprint \cite{B-vdG} is not yet ready. We thank  Prof.\ H.\ Shiga and A.\ Murase for making Shintani's 
unpublished manuscript available for us.  
We also thank T.\ Finis for sending us his manuscripts on the subject.
The authors thank Jonas Bergstr\"om for some useful remarks.
This work was supported by a grant of NWO.
\end{section}
\begin{section}{The Picard Modular Group}\label{sec:defs}
Let $F$ be an imaginary quadratic field of discriminant $D$ 
with ring of integers $O_F$.
We consider a $3$-dimensional $F$-vector space $V$ that contains an
$O_F$-lattice $L$ with a non-degenerate hermitian form $h$ that is
$O_F$-valued on $L$ and of signature $(2,1)$. This determines an
algebraic group $G$ of unitary similitudes
$$
\{ g \in {\rm GL}(3,F): h(gz,gu)= \eta(g)h(z,u) \}
$$
defined over ${\QQ}$. We have $\eta(g)^3=
N_{F/{\QQ}}(\det(g)) \in {\QQ}_{>0}$ and
$\eta$ defines a homomorphism $\eta: G \to {\GG}_m$, called the multiplyer.  
The kernel $G^0:=\ker{\eta}$ is the usual unitary group and 
$G^0\cap \ker{\det}$ is the special unitary group
of signature $(2,1)$. The base change of $G$ to $F$ is isomorphic to
${\rm GL}(3,F)\times {\GG}_m$, where the latter factor corresponds to $\eta$.

We choose an embedding $\sigma: F \to {\CC}$ and identify $F\otimes_{\QQ} {\RR}$
with ${\CC}$. Then $V'=V\otimes_{\sigma}{\RR}$ becomes 
a $3$-dimensional complex
vector space and we can look at
$$
B:=\{ U \subset V' : \dim(U)=1, h|U < 0\}
\subset {\PP}(V')={\PP}^2,
$$
the set of complex lines on which $h$ is negative definite. The group 
$G^{+}=\{ g \in G({\RR}): \det(g)>0\}$
acts on the Grassmannian ${\rm Gr}(1,V')$ and on $B$. We can identify $B$ 
with the complex $2$-ball in ${\PP}^2$. 

A standard choice for the hermitian form is
$$
h(z,z)= \frac{1}{\sqrt{D}} z_1\bar{z}_3+z_2\bar{z}_2
-\frac{1}{\sqrt{D}} z_3\bar{z}_1
$$
on the lattice $O_F^3 \subset {\CC}^3$. This is a maximal lattice in the
sense of Shimura. Note that $z_1$ and $z_3$ do not vanish in $B$ and
by taking $u=z_1/z_3$ and $v=z_2/z_3$, the set of 
negative complex lines can be identified with the Siegel domain
$$
\{ (u,v) \in {\CC}^2: \frac{2}{\sqrt{|D|}}  {\rm Im}(u) +|v|^2 < 0 \}
$$
embedded in ${\PP}^2({\CC})$ via $(u,v) \mapsto (u:v:1)$.
If we identify $G({\QQ})$ with the matrix group
$$
\{ g \in {\rm GL}(3,F): \bar{g}^t H g = \eta(g) H \}
$$
with $H$ the skew-hermitian matrix
$$
H= \left( 
\begin{matrix} && 1/\sqrt{D} \\ &1 \\ -1/\sqrt{D} \\ 
\end{matrix} \right) \, ,
$$
then the action of $g=(g_{ij}) \in G$ on $B$ is given by
$$
(u,v)\mapsto 
\left(\frac{g_{11}u+g_{12}v+g_{13}}{g_{31}u+g_{32}v+g_{33}},
\frac{g_{21}u+g_{22}v+g_{23}}{g_{31}u+g_{32}v+g_{33}} \right)\, .
$$
The denominator $j_1(g,u,v):=g_{31}u+g_{32}v+g_{33}$ 
defines a factor of automorphy for this action.
The Jacobian $J(g,u,v)$ of the action of $G$ on $B$ defines a second factor
of automorphy:
$$
J(g,u,v)= j_1(g,u,v)^{-2} \left(
\begin{matrix} G_{23}v+G_{22} & G_{13}v+G_{12} \\
-G_{23}u+G_{21} & -G_{13}u+G_{11}\\
 \end{matrix} \right),
$$
where $G_{ij}$ is the minor of $g_{ij}$. One checks that 
$$
\det J(g,u,v)=
j_1(g,u,v)^{-3} \det(g) \eqno(1).
$$
We thus have two factors of automorphy
$$
j_1(g,u,v):=g_{31}u+g_{32}v+g_{33}
$$
and
$$
j_2(g,u,v):= \det(g)^{-1} \, \left( \begin{matrix}
-G_{13} u+G_{11} & -G_{13}v-G_{12} \\
G_{23}u-G_{21} & G_{23} v+G_{22} \\
\end{matrix}
\right) \, .
$$
Note that $\det(j_2(g,u,v))=j_1(g,u,v)/\det(g)$.
See also \cite{Shimura2} for the general case.

\bigskip
{\sl Our normalization.} 
Some authors use different normalizations. Via the coordinate change
$w_1=z_1/\sqrt{D},w_2=z_3,w_3=z_2$ we get the hermitian form
$$
w_1\bar{w}_2+w_2\bar{w}_1+w_3\bar{w}_3
$$ 
used for example by Finis (\cite{Finis1}).
The symmetric domain is given by
$$
B=\{ (u,v) \in {\CC}^2: 2 \, {\rm Re}\,  v+ |u|^2 < 0 \}
$$
with $u=w_3/w_2$, $v=w_1/w_2$ and the
action by
$$
(u,v) \mapsto \left(\frac{g_{31}v+g_{32}+g_{33}u}{g_{21}v+g_{22}+g_{23}u},
\frac{g_{11}v+g_{12}+g_{13}u}{g_{21}v+g_{22}+g_{23}u}\right).
$$
The factors of automorphy are then
$$
j_1(g,u,v)= g_{21}v+g_{22}+g_{23}u
$$
and
$$
j_2(g,u,v):=\det(g)^{-1} \left( \begin{matrix}
G_{32} u+G_{33} & G_{32}v+G_{31} \\
G_{12}u+G_{13} & G_{12} v+G_{11} \\
\end{matrix} \right) \eqno(2)
$$
so that we have
$$
j_2(g,u,v)^{-1}= j_1(g,u,v) \,  (J(g,u,v))^t \, . \eqno(3)
$$
Since we are using some of Finis' calculations  
we shall use this normalization in the sequel.

\bigskip
For a pair $(j,k)$ of integers and $g \in G({\RR})$ 
we define a slash operator on functions $f : B \to {\rm Sym}^j({\CC}^2)$ via
$$
(f|_{j,k} g)(u,v) = j_1(g,u,v)^{-k} {\rm Sym}^j(j_2(g,u,v)^{-1}) 
\, f(g\cdot (u,v))
$$
For a discrete subgroup $\Gamma$ of $G({\RR})$ and a character $\chi$ 
of $\Gamma$ of finite order 
we define the space of modular forms of weight $(j,k)$ and character $\chi$ as
$$
\begin{aligned}
M_{j,k}(\Gamma,\chi):= 
\{ f: B \to {\rm Sym}^j({\CC}^2): & \text{ $f$ holomorphic}, \\
& f|_{j,k}\,  g= \chi(g) \, f  
\text{ for all $g \in \Gamma$ } \} \\
\end{aligned}
$$
It contains a subspace $S_{j,k}(\Gamma,\chi)$ of cusp forms.

We will consider in particular the Picard modular group and the
special Picard modular group and fix $\Gamma$ as
$$
\Gamma= G^0({\ZZ}) \text{ and } \Gamma_1=G^0({\ZZ}) \cap \ker \det .
$$
The quotient group $\Gamma/\Gamma_1$ is isomorphic to the roots of
unity contained in $O_F$. 
Note that $N_{F/{\QQ}}(\det(g))$ is a positive integer and a unit, 
hence $\det(g)$ is a root of unity in $O_F$. As characters $\chi$ we will
consider only powers of $\det (g)$. If $\chi={\rm id}$ then we suppress
the notation $\chi$ and write $M_{j,k}(\Gamma)$ instead.

We thus have the notions of vector-valued 
Picard modular forms with character on the groups $\Gamma$ and $\Gamma_1$.
We can consider the ring of scalar-valued modular forms
$$
{\mathcal M}(\Gamma)= \oplus_k M_{0,k}(\Gamma)
$$
and for fixed $j\geq 0$ the ${\mathcal M}(\Gamma)$-module 
$$
{\mathcal M}_{j}(\Gamma):=\oplus_{k} M_{j,k}(\Gamma)
$$ 
and similarly for other discrete groups.

\end{section}
\begin{section}{The Picard Modular Group for $F={\QQ}(\sqrt{-3})$}\label{ourcase}
We now specialize to the case where $F={\QQ}(\sqrt{-3})={\QQ}(\rho)$ 
with $\rho$ a third root of unity and where $V=F^3$ with hermitian form given by
$$
\left(\begin{matrix} 0 & 1 & 0 \\ 1 & 0 & 0 \\ 0 & 0 & 1 \\
\end{matrix}
\right) .
$$
Besides the arithmetic groups
$$
\Gamma=G^0({\ZZ}) \quad \hbox{\rm and} \quad \Gamma_1=G^0({\ZZ}) \cap \ker \det
$$
we shall consider the two congruence subgroups
$$
\Gamma[\sqrt{-3}]= \{ g \in \Gamma : g \equiv 1 \, (\bmod \, \sqrt{-3}\, )\}
$$
and
$$
\Gamma_1[\sqrt{-3}]
= \{ g \in \Gamma_1 : g \equiv 1 \, (\bmod \, \sqrt{-3} \, )\} \, .
$$
Any congruence subgroup $\Gamma'$ of $\Gamma$ acts properly discontinuously
on $B$ and the quotient is an orbifold, called a Picard modular surface. 
It is not compact, but can 
be compactified by adding finitely many cusps, that is, the orbits of 
$\Gamma'$ on the set $\partial B \cap {\PP}^2(F)$ 
of rational points in $\partial B$. 
It is well-known that the action of $\Gamma$ on $\partial B \cap {\PP}^2(F)$ 
is transitive (since the class number of ${\QQ}(\sqrt{-3})$ is $1$, 
see \cite{Z, Feustel}), 
so in this case there is one cusp. The group 
$\Gamma_1[\sqrt{-3}]$ instead, has four cusps; these are represented by
$(1:0:0)$, $(0:1:0)$, $(\rho:1:1)$ and $(\rho:1:-1)$ in $B \subset {\PP}^2$. 
We have an isomorphism
$$
\Gamma/\Gamma_1[\sqrt{-3}] \cong S_4 \times \mu_6
\quad g \mapsto (\sigma(g),\det (g)),
$$
with $\sigma(g)$ the permutation of the four cusps. Here $S_4$ denotes
the symmetric group on four objects and $\mu_6$ denotes the group
of $6$th roots of unity.

The stabilizer
in $G$ of the cusp $(1:0:0)$ is the parabolic group $P$ consisting
of matrices of the form
$$
\left( \begin{matrix}
t_1 & 0 & 0 \\ 0 & t_2 & 0 \\ 0 & 0 & t_3 \\ 
\end{matrix}\right)
\left( \begin{matrix} 
1 & x & -\bar{y} \\ 0 & 1 & 0 \\ 0 & y & 1 \\
\end{matrix}\right)
$$
with $t_i \in F^*$, $x,y \in F$ satisfying
$$
t_1\bar{t}_2 = t_3\bar{t}_3 , \quad x+\bar{x}= -y \bar{y}\, .
$$
Let $T$ be the corresponding maximal torus and $U$ the unipotent 
radical of $P$. Then $U$ has a filtration
$$
0 \to W \longrightarrow U \longrightarrow \tilde{U} \to 0,
$$
with $\tilde{U}={\rm R}_{F/{\QQ}}({\GG}_a)$ and the projection ${\rm pr}:
U \to \tilde{U}$ defined by 
$$
\left( \begin{matrix} 
1 & x & -\bar{y} \\ 0 & 1 & 0 \\ 0 & y & 1 \\\end{matrix}\right)
\mapsto y \, .
$$
The subgroup $W$ is given by $\{ x\in F: x+\bar{x}=0\}$ and is 
the center of $U$. The action of $U$ on $B$ is by translations
$$
(u,v) \mapsto (u+y,v+x-\bar{y}u).
$$
For the group $\Gamma$ the unipotent radical has $W=\sqrt{-3}\, {\ZZ}$
and $\tilde{U}=O_F$ (in fact, for given $y\in O_F$ take 
$x= \rho \, {\rm N}(y)+m \sqrt{-3}$), 
while the congruence subgroup $\Gamma_1[\sqrt{-3}]$
has the same $W$ and $\tilde{U}=\sqrt{-3}\, O_F$. 

The cusps of $\Gamma_1[\sqrt{-3}]\backslash B$ 
are singular points, but can be resolved by elliptic curves
${\CC}/\sqrt{-3}O_F$, cf.\ \cite{Holzapfel1, Feustel}.
\end{section}
\begin{section}{Fourier-Jacobi expansion of scalar-valued modular forms}
\label{FJ}
A scalar-valued 
modular form on the congruence subgroup $\Gamma_1[\sqrt{-3}]$ of $\Gamma$
is invariant under the translations in the unipotent radical  $\Gamma_1[\sqrt{-3}] \cap U$
of the parabolic subgroup that fixes the cusp $(1:0:0)$;
in particular it is invariant under the translations 
$v \mapsto v + m \sqrt{-3}$ with $m\in {\ZZ}$ of $W$ (cf.\ preceding section),  
thus giving rise to a Fourier-Jacobi expansion
$$
f(u,v)= \sum_{n=0}^{\infty} f_n(u) \, w^n \quad \hbox{\rm with} 
\quad w=e^{2\pi  v/\sqrt{3}}.
$$
Here the function $f_n$ defines 
a section of a line bundle $L^{\otimes n}$ on the elliptic
curve $E={\CC}/\sqrt{-3}O_F$. More precisely,
the function $f_n$ satisfies for all $\xi \in \sqrt{-3}\,  O_F$ the
relation
$$
f_n(u+\xi)= \exp(2\pi  n (\bar{\xi} u - \rho \xi\bar{\xi})/\sqrt{3}) \, f_n(u)\, . 
$$
Let $L$ be the line bundle on the elliptic curve 
$E$ corresponding to this factor of automorphy for $n=1$. 
It is the line bundle defined by the divisor class of degree $3$ on $E$ 
represented by $O_F/\sqrt{-3}\, O_F$ in ${\CC}/\sqrt{-3}\, O_F$. 
We know that $\dim H^0(E,L^{\otimes n})=3n$ 
for $n\geq 1$.  A generator $-\rho^2\in \mu_6$ acts on the
space of sections of $L$ with eigenvalues $-\rho^2, 1, -1$. We choose a basis
of eigenfunctions $X$, $Y+Z$ and $Y-Z$ of 
$\Gamma(E,L)$ for this action of $\mu_6$
with $X,Y,Z$ as in Finis, \cite{Finis1} p.\ 157.
Then the sections $X,Y,Z$ satisfy the equation $X^3=\rho(Y^3-Z^3)$.
We have a standard basis of $H^0(E,L^n)$ 
$$
\{ X^a Y^b Z^c: 0 \leq a \leq 2, \, 0 \leq b \leq n-a, \, a+b+c=n\} .
$$ 
The endomorphism ring $O_F$ of $E$ acts on the space $H^0(E,L^n)$
via the so-called Shintani operators
$$
m_{\alpha} : H^0(E,L^n) \to H^0(E, L^{n{\rm N}(\alpha)}), \quad
s(z) \mapsto s(\alpha \, z) \, .
$$
There are also operators in the other direction
$$
t_{\alpha}: H^0(E, L^{n{\rm N}(\alpha)}) \to  H^0(E,L^n),
$$
given by
$$
s(z) \mapsto \sum_{c} s(\alpha^{-1}(z+c)) e^{2\pi  n (\rho {\rm N}(c) -\bar{c} z)/\sqrt{3})},
$$
where $c$ runs over a complete set of representatives for 
$\sqrt{-3}\, O_F/\alpha \, \sqrt{-3} \, O_F$. We refer to
\cite{Finis1} and the literature given there.

As the isotropy group of a cusp in $S_4$ is isomorphic to $S_3=\{ \sigma \in S_4: \sigma(1)=1\}$ we find
an action of $S_3$ on the Fourier-Jacobi expansion of a Picard modular form.
This action is given by $(X,Y,Z) \mapsto (-X,Z,Y)$ for $R_2\sim (34)$ 
(Finis' notation in \cite{Finis1}, p.\ 153) 
and $(X,Y,Z) \mapsto (X,\rho Y,\rho^2 Z)$ for $R_3\sim (234)$.

\end{section}
\begin{section}{The Fourier-Jacobi expansion for vector-valued modular forms}
\label{FJ-vector}
For a vector-valued modular form on $\Gamma_1[\sqrt{-3}]$
the invariance under the unipotent radical
given in Section \ref{ourcase}
implies that in the Fourier-Jacobi expansion the functions $f_n$ satisfy the
relation
$$
f(u+\xi,v+\eta-\bar{\xi}u)={\rm Sym}^j
\left(\begin{matrix} 1 & \bar{\xi} \\ 0 & 1 \\ \end{matrix} \right)
f(u,v)
$$
and this implies that
$$
f_n(u+\xi)=\exp\left(2\pi  n (\bar{\xi}u-\rho\, \bar{\xi}\xi)/\sqrt{3}\right) \,
{\rm Sym}^j
\left(\begin{matrix} 1 & \bar{\xi} \\ 0 & 1 \\ \end{matrix} \right)
f_n(u) \eqno(4)
$$
The functions $f_n=(f_n^{(1)},\ldots,f_n^{(j+1)})$ 
represent sections of a vector bundle $A_n$ of rank $j+1$
on the elliptic curve. The vector bundle  $A_n$ is a tensor product
$L^{\otimes n} \otimes {\rm Sym}^j A$ with $A$ given on ${\CC}$ by the cocycle
$$
\xi \mapsto \left(\begin{matrix} 1 & \bar{\xi} \\ 0 & 1 \\ \end{matrix} \right)
$$
This vector bundle $A$ is an indecomposable bundle and hence $A_n$
has a filtration with $j+1$ quotients isomorphic to $L^{\otimes n}$. 

\begin{proposition}\label{existenceEisenstein}
Let $f$ be a vector-valued modular form of weight $(j,k)$ and 
character ${\rm det}^{\ell}$ on $\Gamma[\sqrt{-3}]$. 
If $f$ is not zero then $j\equiv k \, (\bmod \, 3)$. Moreover,
$f$ is a cusp form if $\ell \not\equiv j \, (\bmod \, 3)$.
\end{proposition}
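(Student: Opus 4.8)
The plan is to read off both assertions from how $f$ transforms under a few explicit elements of $\Gamma[\sqrt{-3}]$. For the congruence $j\equiv k\,(\bmod\,3)$ I use the scalar matrix $\rho\, I_3$. Since $\rho-1=\sqrt{-3}\,(1+\rho)$ with $1+\rho\in O_F$, we have $\rho\equiv 1\,(\bmod\,\sqrt{-3})$, so $\rho\, I_3\in\Gamma[\sqrt{-3}]$ (it clearly lies in $G^0$). It acts trivially on $B$, and the formulas of Section~\ref{sec:defs} give $j_1(\rho I_3,u,v)=\rho$, $j_2(\rho I_3,u,v)=\rho^{2}I_2$ and $\det(\rho I_3)=1$. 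Hence $f|_{j,k}(\rho I_3)=\det(\rho I_3)^{\ell}f$ becomes $\rho^{-k}\,{\rm Sym}^j(\rho^{-2}I_2)\,f=\rho^{-k-2j}f=f$, so $f\neq 0$ forces $k+2j\equiv 0$, i.e.\ $j\equiv k\,(\bmod\,3)$.

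For the cuspidality statement I first analyse $f$ at the cusp $(1:0:0)$. By Section~\ref{FJ-vector} the constant term $f_0$ of the Fourier--Jacobi expansion is a holomorphic section of ${\rm Sym}^jA$ on $E={\CC}/\sqrt{-3}\,O_F$. Writing $f_0=\sum_{m=0}^{j}\phi_m(u)\,e_1^{j-m}e_2^{m}$ in the monomial basis of ${\rm Sym}^j({\CC}^2)$ and substituting into relation~(4) with $n=0$, a short downward induction on $m$ shows that $\phi_m\equiv 0$ for $m\geq 1$ and that $\phi_0$ is a constant $c_0$ (equivalently, since $A$ is indecomposable, $\dim H^0(E,{\rm Sym}^jA)=1$, spanned by the constant section along the unipotent-fixed line ${\CC}\,e_1^{j}$). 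Next I test $f$ against the torus elements $g={\rm diag}(a,a,c)$ with $a,c\in\mu_3$: each such matrix preserves our hermitian form and is $\equiv I_3\,(\bmod\,\sqrt{-3})$, hence lies in $\Gamma[\sqrt{-3}]$, has $\det(g)=a^{2}c$, fixes the cusp $(1:0:0)$, and acts by $(u,v)\mapsto((c/a)u,v)$ with $j_1(g,u,v)=a$ and $j_2(g,u,v)={\rm diag}(c^{-1},a^{-1})$. Restricting $f|_{j,k}g=\det(g)^{\ell}f$ to the $e_1^{j}$-component of the constant term (on which ${\rm Sym}^j(j_2(g)^{-1})$ acts by the scalar $c^{j}$) gives $a^{-k}c^{j}c_0=(a^{2}c)^{\ell}c_0$. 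If $c_0\neq 0$, this forces $a^{-k-2\ell}c^{\,j-\ell}=1$ for all $a,c\in\mu_3$; taking $a=1$ yields $j\equiv\ell\,(\bmod\,3)$. So if $\ell\not\equiv j\,(\bmod\,3)$ then $c_0=0$, i.e.\ $f$ vanishes at $(1:0:0)$.

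Finally I pass to the remaining cusps. The subgroup $\Gamma[\sqrt{-3}]$ is normal in $\Gamma$, and $\Gamma$ acts transitively on $\partial B\cap{\PP}^2(F)$, hence transitively on the cusps of $\Gamma[\sqrt{-3}]\backslash B$. Given any cusp $\mathfrak c$, choose $\gamma\in\Gamma$ with $\gamma\cdot(1:0:0)=\mathfrak c$; then $f|_{j,k}\gamma$ is again a modular form of weight $(j,k)$ and character $\det^{\ell}$ on $\Gamma[\sqrt{-3}]$ (by normality, and because $\det$ is conjugation-invariant), so the previous paragraph shows it vanishes at $(1:0:0)$, i.e.\ $f$ vanishes at $\mathfrak c$. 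Hence $f$ is a cusp form. The step that needs the most care is the identification of the constant Fourier--Jacobi coefficient with the invariant section of ${\rm Sym}^jA$, together with the bookkeeping of the automorphy factors $j_1,j_2$ (signs of the minors $G_{ij}$, and the ${\rm Sym}^j$-convention) in the normalization of Section~\ref{sec:defs}; once these are pinned down, everything reduces to the short computations above.
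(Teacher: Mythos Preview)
Your argument is correct and follows essentially the same route as the paper: the paper likewise uses the scalar $\rho\,1_3$ for the congruence $j\equiv k\,(\bmod\,3)$, observes from relation~(4) that only the first coordinate of $f_0$ survives, and then applies the diagonal element ${\rm diag}(1,1,\rho)$ (your case $a=1$, $c=\rho$) to force $\ell\equiv j\,(\bmod\,3)$. You have added the explicit downward induction showing $f_0\in{\CC}\,e_1^{\,j}$ and the propagation to all cusps via normality of $\Gamma[\sqrt{-3}]$ in $\Gamma$, both of which the paper leaves implicit.
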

\begin{proof} The first statement follows by looking at the action
of $\rho \, 1_3$. For the second we 
write $f=\sum_n f_n w^n$. The equation (4) implies that in the 
constant vector $f_0$ all but the first coordinate are zero. 
If we apply ${\rm diag}(1,1,\rho)$ we find that
$f_0(u)= f_0(\rho u)=\rho^{\ell}\,  {\rm Sym}^j ({\rm diag}(\rho^2,1)) f_0(u)$ 
implying that if $f_0\neq 0$ we must have $\ell \equiv j \, (\bmod \, 3)$.
\end{proof}
\end{section}
\begin{section}{The Hecke Rings}
Finis analyzed in \cite{Finis1} the Hecke rings for the arithmetic
groups $\Gamma$ and $\Gamma_1[\sqrt{-3}]$. These Hecke rings are the same
outside $3$ and are generated by operators $T(\nu)$, $T(\nu,\nu)$
for $\nu \in O_F$ with ${\rm N}(\nu)=p$, a prime congruent to $1 \, \bmod\, 3$
and operators $T(p)$, $T(p,p)$ for primes $p\equiv 2 \, (\bmod\, 3)$ in $O_F$
and for $\Gamma$ operators $T(\sqrt{-3})$ and $T(\sqrt{-3},\sqrt{-3})$.

If $\Gamma g \Gamma$ is a double coset
that can be written as a finite disjoint union of left cosets $\sum \Gamma g_i$,
then the action of the corresponding operator $T$ 
on modular forms in $M_{j,k}(\Gamma)$ is given by
$$
T\, f = \det(g) \, \eta(g)^{k-3} \sum_i f_{|j,k} \, g_i\, .
$$
There is a Petersson scalar product for pairs $(f,g)$ in $M_{j,k}(\Gamma)$
such that one of them is a cusp form. The Hecke operator $T(\nu)$
(resp.\ $T(\nu,\nu)$ with ${\rm N}(\nu)=p\equiv 1(\bmod \, 3)$ 
is adjoint with $T(\bar{\nu})$ (resp.\
$T(\bar{\nu},\bar{\nu})$) and for $p\equiv 2\, (\bmod \, 3)$
$T(p)$ (resp.\ $T(p,p)$) is self-adjoint. As a result, these Hecke operators
are simultaneously diagonalizable and the eigenvalues $\lambda_{\nu}$ 
of an eigenform are algebraic integers if $k\geq 3$ satisfying
$\lambda_{\bar{\nu}}=\bar{\lambda}_{\nu}$.

For the congruence subgroup $\Gamma_1[\sqrt{-3}]$
we need Hecke operators $T(\nu)$ and $T(p)$
for primes congruent to $1 \, (\bmod \, 3)$.
We shall write $T_{\nu}$ for $T(\nu)$ and $T_{-p}$ for $T(-p)$  
for rational primes $p\equiv 2 \, (\bmod \, 3)$.
For our calculations we now need disjoint left coset decompositions 
of the double cosets representing the Hecke operators 
$T_{\nu}$ for $\nu$ with ${\rm N}(\nu)=p$, a prime
$\equiv 1 (\bmod\, 3)$ and the $T_{-p}$ for the primes $p\equiv 2 \, 
(\bmod \, 3)$.

\begin{lemma} (Finis \cite{Finis1})
Write $\Gamma'=\Gamma_1[\sqrt{-3}]$. Then for $\nu\equiv 1 (\bmod \, 3)$ 
the operator $T_{\nu}$ is represented by
$$
\begin{aligned}
T_{\nu} &= \Gamma' \left(\begin{smallmatrix}
1 & 0 & 0 \\ 0 & p & 0 \\ 0 & 0 & \nu 
\end{smallmatrix}\right) \Gamma' \\ 
&= 
\Gamma' \left(\begin{smallmatrix}
p & 0 & 0 \\ 0 & 1 & 0 \\ 0 & 0 & \nu \end{smallmatrix}\right)
\bigoplus\oplus_{a,c}\Gamma'\left(\begin{smallmatrix}1 &
a & c \\ 0 & p & 0 \\ 0 & -\nu \bar{c} &
\nu \end{smallmatrix}\right)
\bigoplus \oplus_{b}\Gamma'\left(\begin{smallmatrix}
\nu & A(b) & -\bar{b} \\ 0 & \nu & 0 \\ 0 & b &
\bar{\nu}\end{smallmatrix}\right)\, ,
\end{aligned}
$$
where $(a,c)$ runs through the set of pairs $\{ (\rho {\rm N}(c)+\sqrt{-3}n,c):
c \in O_F \bmod\,  (\nu), n \in {\ZZ} \bmod \, p \}$ and $b$ through $O_F \bmod (\nu)$
and the algebraic integer $A(b)\in O_F$ is uniquely determined
$\bmod \, \nu$ by ${\rm Tr}(A(b)\bar{\nu})=-{\rm N}(b)$.

Moreover, for a prime $p\equiv 2 \, (\bmod \, 3) $ the operator $T_{-p}$ is 
represented by
$$
\begin{aligned}
T_{-p}&=
\Gamma' \left(\begin{smallmatrix}1 & 0 & 0 \\ 0 & p^2 & 0 \\
0 & 0 & -p\end{smallmatrix}\right)\Gamma'\\
&=\Gamma' \left(\begin{smallmatrix}p^2 & 0 & 0 \\ 0 & 1 & 0
\\ 0 & 0 & -p\end{smallmatrix}\right)
\bigoplus\oplus_{a,c}\Gamma'\left(\begin{smallmatrix}1 &
a & c \\ 0 & p^2 & 0 \\ 0 & p\bar{c} & -p \end{smallmatrix}\right)
\bigoplus\oplus_{m}\Gamma'\left(\begin{smallmatrix} -p &
\sqrt{-3}m & 0 \\ 0 & -p & 0 \\ 0 & 0 & -p\end{smallmatrix}\right) \, ,
\end{aligned}
$$
where $(a,c)$ runs through the set  $\{ (\rho {\rm N}(c)+\sqrt{-3}n,c): 
c\in O_F \bmod \, (p),\,  c\equiv 0 \, (\bmod \sqrt{-3}), n\in {\ZZ} \bmod \, p^2 \} $
and  $m$ through $({\ZZ}/p \ZZ)^*$.
\end{lemma}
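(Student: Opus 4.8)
The plan is to reduce the assertion to a purely local statement at the prime $p$ and then to an explicit enumeration of lattices, which is the standard way to treat Hecke operators at a place prime to the level. Since $p\neq 3$, the $p$-adic closures of $\Gamma':=\Gamma_1[\sqrt{-3}]$ and of the full group $\Gamma=G^0(\ZZ)$ inside $G(\QQ_p)$ coincide; call this common maximal compact $K_p$. It is isomorphic to $\mathrm{GL}_3(\ZZ_p)$ when $p$ splits in $F$ and is hyperspecial in the quasi-split $U_3$ when $p$ is inert. Because the level $\sqrt{-3}$ is prime to $p$, strong approximation for the simply connected group $SU_3$ — whose real points $SU(2,1)$ are noncompact — together with the fact that the relevant tori have class number one, shows that for $g=\mathrm{diag}(1,p,\nu)$ (resp.\ $g=\mathrm{diag}(1,p^2,-p)$) one may choose representatives $g_i$ for $\Gamma'\backslash\Gamma' g\Gamma'$ that are integral and $\equiv 1$ away from $p$, and that $\Gamma' g_i=\Gamma' g_j$ holds exactly when $g_i$ and $g_j$ lie in the same left $K_p$-coset. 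So it suffices to decompose $K_p\,g\,K_p$ into left $K_p$-cosets and to check that the families listed in the statement give a complete, irredundant set of representatives.

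For the split case $p\equiv 1\,(\bmod\,3)$, $p=\nu\bar\nu$, one has $G\otimes\QQ_p\cong\mathrm{GL}_3(\QQ_p)\times\GG_m$; projecting onto the $\mathrm{GL}_3$-factor attached to $\nu$ turns $g=\mathrm{diag}(1,p,\nu)$ into a matrix with elementary divisors $(1,p,p)$. Hence the left $K_p$-cosets in $K_p\,g\,K_p$ are in bijection with the points of $\PP^2(\FF_p)$, of which there are $1+p+p^2$. I would stratify these points by their position relative to the flag $\langle e_1\rangle\subset\langle e_1\rangle^{\perp}\subset F^3$ fixed by the parabolic $P$ of Section \ref{ourcase}; this gives three strata of sizes $1$, $p$ and $p^2$, which I claim correspond to the three families in the statement. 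For a representative of each stratum one starts from the evident $\mathrm{GL}_3$-matrix and corrects its off-diagonal entries so that the result genuinely lies in $G$, i.e.\ satisfies $\bar g^t H g=\eta(g)H$ with $\eta(g)=p$; this forced correction is exactly what produces the entry $-\nu\bar c$ in the second family and the entry $A(b)$, determined $\bmod\,\nu$ by $\mathrm{Tr}(A(b)\bar\nu)=-\mathrm{N}(b)$, in the third, and it also pins down the ranges of $a$, $c$ and $b$. Pairwise $\Gamma'$-inequivalence is then immediate across strata (distinct points of $\PP^2(\FF_p)$, hence distinct lattices) and reduces to a one-line check within each stratum; as the strata together exhaust all $1+p+p^2$ cosets, the decomposition is complete.

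The inert case $p\equiv 2\,(\bmod\,3)$ runs along the same lines, now with $F_p=F\otimes\QQ_p$ the unramified quadratic extension of $\QQ_p$, $K_p$ the hyperspecial maximal compact of $U_3$, and $g=\mathrm{diag}(1,p^2,-p)$ of $\mathfrak p$-elementary-divisor type $(0,1,2)$ with multiplier $\eta(g)=p^2$. The left $K_p$-cosets now correspond to $O_{F_p}$-lattices of that type that in addition carry the hermitian form rescaled by $p^2$, and it is precisely this self-duality-up-to-$p^2$ requirement, combined with the level condition for $\Gamma'$ (which forces $\tilde U=\sqrt{-3}\,O_F$), that produces the congruence $c\equiv 0\,(\bmod\,\sqrt{-3})$ and the ranges $n\in\ZZ/p^2$, $m\in(\ZZ/p)^{*}$, the third family being the deepest stratum. \textbf{The main obstacle is this inert bookkeeping}: getting the parametrising sets and congruence conditions exactly right, and verifying that the third family contributes cosets genuinely distinct from, and disjoint from, the others. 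Since the statement is due to Finis \cite{Finis1}, a perhaps safer route is to verify it directly: for each listed $g_i$, check that $g_i\in M_3(O_F)$ with $\eta(g_i)$ and $\det(g_i)$ as prescribed and that $\eta(g_i)\,g_i^{-1}\in M_3(O_F)$ (so $g_i\in\Gamma' g\Gamma'$), then count that the number of listed representatives equals the local index $[K_p:K_p\cap gK_pg^{-1}]$, and finally verify $\Gamma' g_i\neq\Gamma' g_j$ for $i\neq j$ — which turns the lemma into a finite, if tedious, check.
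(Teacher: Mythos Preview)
The paper does not prove this lemma at all: it is attributed to Finis and simply quoted from \cite{Finis1}, so there is no proof in the paper to compare against.

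Your outline is the standard and correct route to such a decomposition. The reduction to a local computation at $p$ via strong approximation for $SU_3$ (plus class number one for the torus) is exactly right, and in the split case the identification of left $K_p$-cosets with $\PP^2(\FF_p)$, stratified by the Borel flag into pieces of sizes $1$, $p$, $p^2$, matches the three listed families (of sizes $1$, $p^2$, $p$ in the order given). The forced off-diagonal corrections you describe are precisely what the similitude condition $\bar g^t H g=\eta(g)H$ imposes. One small clarification for the inert case: the condition $c\equiv 0\ (\bmod\ \sqrt{-3})$ is not a constraint on the residue class of $c$ modulo $p$ (since $p$ and $\sqrt{-3}$ are coprime) but a normalisation of the chosen lift, needed so that the representative interacts correctly with the level-$\sqrt{-3}$ structure when one passes from $K_p$-cosets back to $\Gamma'$-cosets; your attribution of it to ``the level condition for $\Gamma'$'' is essentially this, though the phrasing ``combined with the self-duality requirement'' slightly overstates its origin. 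Your fallback --- verify membership in the double coset, count representatives against the local index, and check pairwise inequivalence --- is a perfectly acceptable proof and is likely close to what Finis actually wrote down.
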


Shintani and Finis showed how the Hecke operators act on the Fourier-Jacobi
expansion in the scalar-valued case. 
We quote from Finis. Let $f=\sum_n f_n w^n$ be the Fourier-Jacobi
expansion and let $g=T_{\nu}\, f$ (resp.\ $g=T_{-p}\, f$) have
Fourier-Jacobi expansion $g=\sum_n g_n w^n$, then we have in case $T=T_{\nu}$
for $g_n$ the expression
$$
g_n= \nu p^{k-2} m_{\nu}(f_{n/p})+ 
\nu^{-1} t_{\bar{\nu}}(f_{np})
+\bar{\nu}^{k-2} \nu^{-1} t_{\nu}m_{\bar{\nu}}(f_n)\, ,
$$
while in case $T=T_{-p}$ we have the expression
$$
g_n=(-p)^{2k-3}m_{-p}(f_{n/p^2})+
(-p)^{k-3}(p{\bf 1}_{\ZZ}(n/p)-1)f_n-t_{-p}(f_{np^2})/p\, .
$$
with ${\bf 1}_{\ZZ}$ the characteristic function of ${\ZZ}$ and $f_m=0$
if $m\not\in {\ZZ}$.
Note that $t_{\nu}$ is defined in Section~\ref{FJ}.

We now give a partial analogue for the vector-valued case. We write 
$f= \sum_n f_n w^n$ and $g=T\, f = \sum_n g_n w^n$, where
$$
f_n= \left(\begin{matrix} f_n^{(1)} \\ \vdots \\ f_n^{(j+1)}\\ \end{matrix} 
\right)
\qquad \hbox{\rm and} \qquad 
g_n= \left(\begin{matrix} g_n^{(1)} \\ \vdots \\ g_n^{(j+1)}\\ \end{matrix}
\right)
$$
We give the action on the last coordinate.
\begin{lemma}
We have for $T=T_{\nu}$ with ${\rm N}(\nu)=p\equiv 1 (\bmod \, 3)$ a prime
$$
g_n^{(j+1)}=\nu p^{k-2}\left(
p^j m_{\nu} f_{n/p}^{(j+1)} + p^{1-k} t_{\bar{\nu}}f_{np}^{(j+1)}+
\nu^{j-k} t_{\nu}m_{\bar{\nu}}(f_n^{(j+1)} \right),
$$
where we put $f_{n/p}^{(j+1)}=0$ if $n/p \not\in {\ZZ}$.

For $T_{-p}$ with $p$ a prime $\equiv 2 \, (\bmod \, 3)$ we have for 
$g_n^{(j+1)}$ the expression
$$
(-p)^{k+j-3}\left(p\, {\bf 1}_{\ZZ}(n/p) -1\right) f_n^{(j+1)}
-p^{2j+2k-3}m_{-p}(f_{n/p^2}^{(j+1)})-t_{-p}(f_{np^2}^{(j+1)})/p\, .
$$
\end{lemma}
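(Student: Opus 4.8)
The plan is to substitute the left coset decompositions of the previous Lemma into the identity $T\,f=\det(g)\,\eta(g)^{k-3}\sum_i f|_{j,k}\,g_i$ and to read off the $(j+1)$-st component of both sides. The computation rests on one structural remark: every representative $g_i$ occurring in that Lemma has $g_{21}=g_{23}=g_{31}=0$, so in our normalization $j_1(g_i,u,v)=g_{22}$ is a constant, and a short computation of the minors $G_{k\ell}$ shows that $j_2(g_i,u,v)$, hence also $j_2(g_i,u,v)^{-1}$, is \emph{upper triangular}. As ${\rm Sym}^j$ of an upper triangular matrix $\bigl(\begin{smallmatrix}\alpha & \beta\\ 0 & \delta\end{smallmatrix}\bigr)$ has last row $(0,\dots,0,\delta^j)$, this gives
$$
\bigl(f|_{j,k}\,g_i\bigr)^{(j+1)}(u,v)=g_{22}^{-k}\,\delta_i^{\,j}\,f^{(j+1)}\bigl(g_i\cdot(u,v)\bigr),
$$
where $\delta_i$ is the (constant) lower right entry of $j_2(g_i,u,v)^{-1}$; thus only the scalar function $f^{(j+1)}=\sum_n f_n^{(j+1)}w^n$ enters. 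The same triangularity applied to the cocycle (4) shows that $f_n^{(j+1)}$ transforms exactly by the factor of automorphy of $L^n$ on $E={\CC}/\sqrt{-3}O_F$, i.e.\ $f_n^{(j+1)}\in H^0(E,L^n)$, so that the Shintani operators $m_\alpha$ and $t_\alpha$ of Section~\ref{FJ} apply to it verbatim. Finally, the prefactor $\det(g_i)\eta(g_i)^{k-3}$ is the same for all representatives of a given operator, because $\Gamma_1[\sqrt{-3}]\subset\ker\det\cap\ker\eta$.

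For $T=T_\nu$ I would go through the three families in turn. Each representative has $\det(g_i)=p\nu$ and, since ${\rm N}_{F/\QQ}(p\nu)=p^3$, multiplier $\eta(g_i)=p$, so the common prefactor is $\nu p^{k-2}$. For the first representative one finds $j_1=1$, $\delta_i=p$, and $(u,v)\mapsto(\nu u,pv)$, which on the Fourier--Jacobi side is the Shintani operator $m_\nu$ applied to $f_{n/p}^{(j+1)}$, contributing $\nu p^{k-2}\,p^{\,j}\,m_\nu f_{n/p}^{(j+1)}$. For the second family $j_1=p$, $\delta_i=1$, and $(u,v)\mapsto\bigl(\bar\nu^{-1}(u-\bar c),\,(v+a+cu)/p\bigr)$; expanding $f^{(j+1)}$ in $w$ and summing over $a=\rho{\rm N}(c)+\sqrt{-3}n$ with $n$ mod $p$, the geometric sum in $n$ forces the inner index to be divisible by $p$ and produces a factor $p$, after which the sum over $c$ is exactly the kernel defining $t_{\bar\nu}$, acting on $f_{np}^{(j+1)}$. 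For the third family $j_1=\nu$, $\delta_i=\nu$, and $(u,v)\mapsto\bigl((\bar\nu u+b)/\nu,\ v+(A(b)-\bar bu)/\nu\bigr)$; here the dilation by $\bar\nu$ is $m_{\bar\nu}$, and the remaining sum over $b$, in which one uses ${\rm Tr}(A(b)\bar\nu)=-{\rm N}(b)$ to put the exponential into the shape $\exp\!\bigl(2\pi n(\rho{\rm N}(\cdot)-\overline{(\cdot)}u)/\sqrt3\bigr)$, is the kernel of $t_\nu$, giving $t_\nu m_{\bar\nu}(f_n^{(j+1)})$. Multiplying each contribution by $\nu p^{k-2}\,g_{22}^{-k}\,\delta_i^{\,j}$ and simplifying the powers of $\nu,\bar\nu,p$ by $\nu\bar\nu=p$ gives exactly the three stated terms.

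The case $T=T_{-p}$ is handled the same way, now with $\det(g_i)=-p^3$ and $\eta(g_i)=p^2$ for all representatives, hence common prefactor $-p^{2k-3}$. The first representative ${\rm diag}(p^2,1,-p)$ has $j_1=1$, $\delta_i=p^2$, $(u,v)\mapsto(-pu,p^2v)$, giving $-p^{2j+2k-3}m_{-p}(f_{n/p^2}^{(j+1)})$; the second family has $j_1=p^2$, $\delta_i=1$, and the same geometric-sum mechanism (now over $n$ mod $p^2$) followed by the $c$-sum yields $-t_{-p}(f_{np^2}^{(j+1)})/p$; the family $\bigl(\begin{smallmatrix}-p & \sqrt{-3}m & 0\\ 0 & -p & 0\\ 0 & 0 & -p\end{smallmatrix}\bigr)$ has $j_1=-p$, $\delta_i=-p$, $(u,v)\mapsto(u,v-\sqrt{-3}m/p)$, and $\sum_{m\in(\ZZ/p\ZZ)^\ast}e^{-2\pi i mn/p}=p\,{\bf 1}_{\ZZ}(n/p)-1$ then produces $(-p)^{k+j-3}\bigl(p\,{\bf 1}_{\ZZ}(n/p)-1\bigr)f_n^{(j+1)}$. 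Summing the three contributions gives the asserted expression.

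The conceptual point is the triangularity of $j_2(g_i,u,v)^{-1}$, which both makes the last coordinate close up on itself and realizes $f_n^{(j+1)}$ as a scalar section of $L^n$. The work --- and the place I expect to have to be careful --- is the bookkeeping in the second and third families: matching the automorphy factor in (4) and the translational part of $g_i\cdot(u,v)$ against the precise kernels of $t_\alpha$ and $m_\alpha$, tracking the sets over which $c$, $b$ and $m$ run (via $O_F/\nu O_F\cong\sqrt{-3}O_F/\nu\sqrt{-3}O_F$ and the conjugation isomorphism $O_F/\nu O_F\cong O_F/\bar\nu O_F$), and carrying the powers of $\nu,\bar\nu,p$ correctly. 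Since each step is the ${\rm Sym}^j$-twisted counterpart --- twisted only by the harmless scalar $\delta_i^{\,j}$ --- of the scalar identities of Shintani and Finis recalled above, no essentially new difficulty arises beyond careful computation; the other components $g_n^{(r)}$, $r\le j$, are genuinely more involved, since there $j_2^{-1}$ is triangular but not diagonal, which is why the Lemma records only the last one.
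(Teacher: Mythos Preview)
Your proposal is correct and follows exactly the approach the paper takes: the paper's proof consists of the single observation that the coset representatives act by upper triangular factors of automorphy (your ``structural remark''), plus the sentence ``An explicit calculation gives the result.'' You have simply written out that explicit calculation in detail, correctly identifying the three contributions with $m_\nu$, $t_{\bar\nu}$, and $t_\nu m_{\bar\nu}$ (respectively $m_{-p}$, $t_{-p}$, and the Ramanujan-sum term) via the same bookkeeping Finis uses in the scalar case.
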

\begin{proof}
Since the left coset representatives $g$ of the $T_{\nu}$ and $T_{-p}$
acts by upper triangular factors of automorphy 
we can express $g_n^{(j+1)}$ solely in terms of
the last component of $f_n$. An explicit calculation gives the result.
\end{proof}
\end{section}
\begin{section}{The Ring of Scalar-valued Picard Modular forms}\label{ringofscalarmodforms}
We recall the structure of the rings of modular forms on $\Gamma_1[\sqrt{-3}]$,
$\Gamma[\sqrt{-3}]$ and $\Gamma$ as obtained by Feustel and Holzapfel, 
cf.\ also \cite{Finis1}. 
The ring $M(\Gamma[\sqrt{-3}])$ is polynomial ring
$$
M(\Gamma[\sqrt{-3}])={\CC}[\varphi_0,\varphi_1,\varphi_2]
$$
with $\varphi_i\in M_3(\Gamma[\sqrt{-3}])=M_{0,3}(\Gamma[\sqrt{-3}])$ 
given by their Fourier-Jacobi 
expansions; in fact, $\varphi_{\nu} = \vartheta_{\nu}^3$ for $\nu=0,1,2$ with
$$
\vartheta_{\nu}= \sum_{\xi \in O_F} 
\rho^{-\nu \, {\rm Tr}(\xi)} \, m_{\xi}(Y) \, w^{{\rm N}(\xi)}\, . 
$$
Here $m_{\xi}$ is the endomorphism of $\oplus_n H^0(E,L^{\otimes n})$ defined
in Section \ref{FJ}; 
we have
$$
\begin{aligned}
\varphi_0=
1+  \left(9\, Y+9 \, Z \right) w+  \left(27\, {Y}^{2}+54\,YZ+ 27\, {Z}^{2}
 \right) {w}^{2}+ &\\
\left( 36\,{Y}^{3}+81\,{Y}^{2}Z+81\,Y{Z}^{2}+36\,{Z}
^{3} \right) {w}^{3}+ \ldots & \\
\end{aligned}
$$
The expansions of $\varphi_i$ are obtained by substituting 
$(\rho^i Y, \rho^{2i}Z)$ for $(Y,Z)$, as follows from the definition of 
$\vartheta_{\nu}$.

\smallskip
\noindent
{\sl Notation}
Before we proceed a word about our notation for representations of $S_4$.
The irreducible representations of $S_4$ correspond to the partitions of $4$
and are denoted by $s[4], s[3,1], s[2,2], s[2,1,1]$ and $s[1,1,1,1]$.
They are of dimensions $1,3,2,3,1$. 
Here $s[4]$ is the trivial and $s[1,1,1,1]$ the alternating representation.
The representations $s[3,1]$ is given by the permutation representation
on $\sum_{i=1}^4 x_i=0$ in $(x_1,\ldots,x_4)$-space.

\smallskip
  The group $\Gamma/\Gamma[\sqrt{-3}]
\cong S_4 \times \mu_2$ acts on $M_3(\Gamma[\sqrt{-3}])$; the generator
of $\mu_2$ acts by $-1$ on this space, while the representation of $S_4$
is the irreducible representation $s[2,1,1]$. More precisely, define 
forms $x_1,\ldots, x_4$ in $M_3(\Gamma[\sqrt{-3}])$ by
$$
\varphi_0+\varphi_1+\varphi_2, \quad
-3\varphi_0+\varphi_1+\varphi_2, \quad
\varphi_0-3\varphi_1+\varphi_2, \quad
\varphi_0+\varphi_1-3\varphi_2 . 
$$
In this way we have generators $x_1,\ldots, x_4$ with $\sum x_i=0$ 
and $\sigma \in S_4$
acts by $x_i \mapsto {\rm sign}(\sigma)\, x_{\sigma(i)}$.

   The ring $M(\Gamma_1[\sqrt{-3}])$ of modular forms on $\Gamma_1[\sqrt{-3}]$
is an extension of degree $3$ of $M(\Gamma[\sqrt{-3}])$ by a modular form
$$
\zeta \in S_6(\Gamma[\sqrt{-3}],{\rm det})
$$
satisfying a relation
$$
\zeta^3= \frac{-\rho}{\sqrt{-3} \, 3^7} 
 \, \varphi_0\varphi_1\varphi_2(\varphi_1-\varphi_0)(\varphi_2-\varphi_0)
(\varphi_2-\varphi_1) . \eqno(5)
$$
In fact, $\zeta$ is given by its Fourier-Jacobi expansion
$$
(1/6) \sum_{\xi \in O_F} {\xi}^5 \, m_{\xi}(X) \, w^{{\rm N}(\xi)}.
$$
Concretely,
$$
\begin{aligned}
\zeta = X \, w-27\,XYZ\, {w}^{3}+ \left( 32\,X{Y}^{3}+32\,X{Z}^{3} \right) {w}^{4}+ &\\
 \left( -211\,X{Y}^{6}+136\,X{Y}^{3}{Z}^{3}-211\,X{Z}^{6} \right) {w}^
{7}& +\ldots \\
\end{aligned}
$$
The action of $S_4$ on $\zeta$ is by the sign character.

Since $-1_3$ acts on $M_k(\Gamma_1[\sqrt{-3}])$ by $(-1)^k$ we find the 
decomposition under $\Gamma/\Gamma_1[\sqrt{-3}]$
$$
M_k(\Gamma_1[\sqrt{-3}])=M_k(\Gamma[\sqrt{-3}])\oplus 
M_k(\Gamma[\sqrt{-3}],{\rm det}) \oplus M_k(\Gamma[\sqrt{-3}],{\rm det}^2)
$$
with the recursions
$$
M_k(\Gamma[\sqrt{-3}],{\rm det}) =M_{k-6}(\Gamma[\sqrt{-3}]) \, \zeta , 
$$
and
$$
M_k(\Gamma[\sqrt{-3}],{\rm det}^2) =
M_{k-12}(\Gamma[\sqrt{-3}]) \, \zeta^2\, .
$$
Moreover, we have for $\ell=1,2$ (cf.\ Proposition \ref{existenceEisenstein})
$$
M_k(\Gamma[\sqrt{-3}],{\rm det}^{\ell})=S_k(\Gamma[\sqrt{-3}],{\rm det}^{\ell})\, .
$$
The ring $M(\Gamma)$ equals the ring of invariants
${\CC}[\varphi_0,\varphi_1,\varphi_2]^{S_4\times \mu_2}$
and is a polynomial ring generated by elements $\sigma_2$, $\sigma_4$ 
and $\sigma_3^2$ of weight $6$, $12$ and $18$.

The ring $M(\Gamma_1)$ is the ring of invariants 
${\CC}[\varphi_0,\varphi_1,\varphi_2,\zeta]^{S_4}$
and is the quotient of the ring
${\CC}[\sigma_2,\sigma_4,\sigma_3^2,\zeta \sigma_3,\zeta^2]$ 
by the ideal of relations implied by (5) and the notation (i.e.\
$(\zeta\sigma_3)^2= \zeta^2 \sigma_3^2$).
 
The Satake compactification
$\Gamma[\sqrt{-3}]\backslash B^*$ of the ball quotient
$\Gamma[\sqrt{-3}]\backslash B$
is isomorphic to ${\PP}^2= {\rm Proj}\,  {\CC}[\varphi_0,\varphi_1,\varphi_2]$, 
see \cite{Holzapfel1}.
Viewing ${\PP}^2$ as the hyperplane $\sum_{i=1}^4 x_i=0$ in ${\PP}^3$
we have six lines (viz.\ $x_i=x_j$), that make up the divisor of $\zeta^3$.

The cusps are the four points (in the $\varphi_i$-coordinates)
$$
c_1=(1:1:1), \, c_2=(1:0:0), \, c_3=(0:1:0), \, c_4=(0:0:1)
$$ 
The surface $\Gamma_1[\sqrt{-3}]\backslash B^*$ 
is a degree $3$ cover of $\Gamma[\sqrt{-3}]\backslash B^*$
branched along the union of the six lines $x_i-x_j=0$ with $1\leq i<j \leq 4$.
This surface has three singular points (order three quotient singularities)
corresponding to the
three intersections of these lines outside the four cusps, viz.\
$p_{14,23}=(1:1:0)$, $p_{13,24}=(1:0:1)$ and $p_{12,34}=(0:1:1)$.
In $p_{ij,kl}$ we have $x_i=x_j$ and $x_k=x_l$. 

\begin{remark}
The eigenvalues of the action of $T_{\nu}$ on $\varphi_i$ 
for $\nu$ with $\nu\bar{\nu}=p\equiv 1 \, (\bmod \, 3)$ are 
$(p+1)\nu + \bar{\nu}^2$ and for $T_{-p}$ these are $-1-p^3$,
cf.\ the formulas (9a) and (9b) below.
\end{remark}

\end{section}
\begin{section}{Expansion of Picard modular Forms along a Modular Curve}
\label{sec:restriction}
Picard modular surfaces contain many modular curves. In the following 
we shall need only one curve, namely the one that in the moduli space 
interpretation corresponds to the
degree $3$ covers of ${\PP}^1$ that are hyperelliptic curves of genus $3$.
This curve consists of six irreducible components and is defined as follows.

Let ${\mathcal H} \to B$ be the embedding of the upper half-plane in $B$
given by $\tau \mapsto (0,\sqrt{-3} \, \tau)$. The corresponding embedding
of algebraic groups ${\rm GL}_2 \to G$ is given by
$$
\left(
\begin{matrix}
a & b \\ c & d \\
\end{matrix} \right)
\mapsto
\left(
\begin{matrix}
a & \sqrt{-3} b & 0 \\
c/\sqrt{-3} & d & 0 \\
0 & 0 & ad-bc \\
\end{matrix}
\right)
$$
The image defines an algebraic curve in the Satake compactification of
$\Gamma_1[\sqrt{-3}]\backslash B$ and $\Gamma[\sqrt{-3}]\backslash B$
and it passes through two cusps. Using the action of $S_4$ we get six
curves $C_{ij}$ with $1 \leq i < j \leq 4$ on 
$\Gamma_1[\sqrt{-3}]\backslash B^*$ and six image curves on 
$\Gamma [\sqrt{-3}]\backslash B^*$. On the latter surface these
curves are given by $x_i=x_j$ as the next lemma shows.

\begin{lemma}
The stabilizer in $\Gamma=G^0(O_F)$ of the modular curve $C=C_{34}$ given by
$u=0$ equals
$$
\left\{ 
g = \left( 
\begin{matrix}
a & \sqrt{-3} \, b & 0 \\ c/\sqrt{-3} & d & 0 \\
0 & 0 & \varepsilon \\
\end{matrix} \right) : \left( \begin{matrix} a & b \\ c & d \\ \end{matrix} \right)
\in \Gamma_0(3), \varepsilon \in O_F^* \right\}
$$
\end{lemma}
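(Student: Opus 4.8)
The plan is to pin down a stabilizing matrix $g=(g_{ij})\in\Gamma$ in two moves: first force $g$ to be block diagonal, using the explicit action on $B$ together with the hermitian relation $\bar g^{t}Hg=H$ (with $H=\left(\begin{smallmatrix}0&1&0\\1&0&0\\0&0&1\end{smallmatrix}\right)$), and then identify the resulting $2\times 2$ block as the image of an element of $\Gamma_0(3)$ under the embedding ${\rm GL}_2\to G$ of Section~\ref{sec:restriction}. For the first move I would evaluate $g$ on a point $(0,v)$ of $C$: by the action formula in our normalization (Section~\ref{sec:defs}) the first coordinate of $g\cdot(0,v)$ equals $(g_{31}v+g_{32})/(g_{21}v+g_{22})$, and if $g$ stabilizes $C$ this vanishes for all $v$ with ${\rm Re}(v)<0$, hence identically in $v$, forcing $g_{31}=g_{32}=0$. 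Then $g$ preserves the plane spanned by the first two basis vectors, on which $H$ restricts to the non-degenerate form $J=\left(\begin{smallmatrix}0&1\\1&0\end{smallmatrix}\right)$; since $g$ preserves $H$ it also preserves the orthogonal complement, the third coordinate line. Hence $g=\left(\begin{smallmatrix}M&0\\0&\varepsilon\end{smallmatrix}\right)$ with $M\in{\rm GL}_2(O_F)$ satisfying $\bar M^{t}JM=J$ and $\varepsilon\in O_F$ with $\varepsilon\bar\varepsilon=1$; as the norm-one elements of $O_F$ are exactly its units, $\varepsilon\in O_F^{*}$.

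The reverse inclusion is a direct verification. For $m=\left(\begin{smallmatrix}a&b\\c&d\end{smallmatrix}\right)\in\Gamma_0(3)$, the condition $3\mid c$ makes $\sqrt{-3}\,b$ and $c/\sqrt{-3}$ integral, the relation $ad-bc=1$ gives $\bar M^{t}JM=J$ for the $2\times2$ block, and together with $\varepsilon\bar\varepsilon=1$ this yields $\bar g^{t}Hg=H$, so $g\in\Gamma$; on $C$, parametrized by $\tau\mapsto(0,\sqrt{-3}\,\tau)$, one computes that $g$ acts by $\tau\mapsto(a\tau+b)/(c\tau+d)$, which preserves the upper half-plane. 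As $\Gamma_0(3)$ and $O_F^{*}$ are groups, every such $g$ actually stabilizes $C$.

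For the forward direction it remains to show that every $M=\left(\begin{smallmatrix}\alpha&\beta\\\gamma&\delta\end{smallmatrix}\right)\in{\rm GL}_2(O_F)$ with $\bar M^{t}JM=J$ has the form $\left(\begin{smallmatrix}a&\sqrt{-3}\,b\\c/\sqrt{-3}&d\end{smallmatrix}\right)$ for some $\left(\begin{smallmatrix}a&b\\c&d\end{smallmatrix}\right)\in\Gamma_0(3)$. Rewriting the relation as $M^{-1}=J\bar M^{t}J=\left(\begin{smallmatrix}\bar\delta&\bar\beta\\\bar\gamma&\bar\alpha\end{smallmatrix}\right)$ and comparing with $M^{-1}=(\det M)^{-1}\left(\begin{smallmatrix}\delta&-\beta\\-\gamma&\alpha\end{smallmatrix}\right)$ gives $(\det M)\,\bar\alpha=\alpha$, $(\det M)\,\bar\delta=\delta$, $(\det M)\,\bar\beta=-\beta$ and $(\det M)\,\bar\gamma=-\gamma$. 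Once one knows $\det M=1$, these say $\alpha,\delta\in\{x\in O_F:\bar x=x\}=\ZZ$ and $\beta,\gamma\in\{x\in O_F:\bar x=-x\}=\sqrt{-3}\,\ZZ$; writing $\beta=\sqrt{-3}\,b$, $\gamma=\sqrt{-3}\,\gamma_0$ with $b,\gamma_0\in\ZZ$ and setting $a=\alpha$, $d=\delta$, $c=-3\gamma_0$ gives $\gamma=c/\sqrt{-3}$, $3\mid c$ and $ad-bc=\alpha\delta-\beta\gamma=\det M=1$, so that $\left(\begin{smallmatrix}a&b\\c&d\end{smallmatrix}\right)\in\Gamma_0(3)$ and $M$ is as claimed. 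Assembling the two inclusions proves the lemma.

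The step I expect to be the main obstacle, and the one that really needs care, is exactly the clause ``once one knows $\det M=1$'': a priori $\det M$ is only a unit of $O_F$, i.e.\ a sixth root of unity, and one must show this value is trivial after the harmless central factor $\lambda 1_3$ (with $\lambda\in O_F^{*}$, acting trivially on $B$) is divided out. This is a small but delicate computation in the arithmetic of the isotropic rank-two hermitian lattice $(O_F^{2},J)$: feeding the relations $(\det M)\,\bar\alpha=\alpha$, etc., against the facts that $\sqrt{-3}$ generates the unique prime of $O_F$ above $3$ and that ${\rm N}(\det M)=1$, one pins $\det M$ down. As a sanity check one may note that $C=C_{34}$ passes through two of the four cusps, corresponding to the two isotropic lines of $J$; the stabilizer must preserve that pair, which is consistent with the block form found above.
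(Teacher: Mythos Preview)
The paper states this lemma without proof, so there is no argument to compare against. Your approach---forcing block-diagonal form from the action and the hermitian relation, then analysing the $2\times2$ block via $\bar M^{t}JM=J$---is the natural one and is carried out correctly up to the step you yourself single out. The gap is precisely there, and your proposed fix does not close it.

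Your suggestion is to absorb the unit $\det M$ into a central scalar $\lambda\,1_{3}$. This replaces $\det M$ by $\lambda^{2}\det M$; since the squares in $O_{F}^{*}=\mu_{6}$ form only $\mu_{3}$, this works when $\det M\in\mu_{3}$ but fails when $\det M\in-\mu_{3}$. A concrete witness is
\[
g_{0}=\left(\begin{smallmatrix}0&1&0\\1&0&0\\0&0&1\end{smallmatrix}\right),
\]
which lies in $\Gamma$ (indeed $g_{0}$ equals the Gram matrix, and $\bar g_{0}^{\,t}Hg_{0}=H^{3}=H$), stabilizes $u=0$ via $(0,v)\mapsto(0,1/v)$, and has $2\times2$ block $J$ with $\det J=-1$; no central scalar brings it into the displayed form. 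In the $\tau$-parameter this is the Fricke involution $\tau\mapsto-1/(3\tau)$, which is not in $\Gamma_{0}(3)$. (Likewise the non-real scalars $\rho\,1_{3}$, $\rho^{2}1_{3}$ lie in the stabilizer but not in the displayed set.)

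What your computation actually proves is the corrected statement: after multiplying by a unit to arrange $\det M\in\{\pm1\}$, the relations $(\det M)\bar\alpha=\alpha$, $(\det M)\bar\beta=-\beta$, etc., force either $\alpha,\delta\in\ZZ$ and $\beta,\gamma\in\sqrt{-3}\,\ZZ$ (the $\Gamma_{0}(3)$-case, $\det M=1$) or $\alpha,\delta\in\sqrt{-3}\,\ZZ$ and $\beta,\gamma\in\ZZ$ (the Fricke coset, $\det M=-1$). So the true stabilizer is the displayed group together with the centre and $g_{0}$; equivalently, the induced action on $\mathcal H$ is through $\Gamma_{0}(3)^{+}=\langle\Gamma_{0}(3),w_{3}\rangle$. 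For the use made of the lemma in Section~\ref{sec:restriction} only the easy inclusion is needed, so the imprecision is harmless there.
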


If $f$ is a scalar-valued modular form of weight $(0,k)$ and character ${\rm det}^\ell$ then we can develop $f$ in a Taylor expansion along the curve $u=0$
$$
f(u,\sqrt{-3}\tau )=\sum_{n=0}^{\infty} f_n(\tau) \, u^n \, . \eqno(6)
$$
The functional equation of $f$
implies the following proposition.

\begin{proposition}
The coefficients $f_n(\tau)$ of $f$ in (6) are modular forms of weight $k+n$
on $\Gamma_1(3)$ and cusp forms for $n>0$. Moreover, $f_n=0$ unless
$n \equiv \ell \, (\bmod \, 3)$.
\end{proposition}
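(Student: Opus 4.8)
The plan is to extract all three assertions from the functional equation of $f$ restricted to the subgroup of $\Gamma[\sqrt{-3}]$ that stabilizes the curve $C$. First I would note that the embedding ${\rm GL}_2\to G$ displayed above carries $\Gamma_1(3)$ into $\Gamma[\sqrt{-3}]$ and into the stabilizer of $C$: the preceding lemma gives the $2\times 2$-block $\Gamma_0(3)$ for the larger group $\Gamma$, and the congruence condition $g\equiv 1\,(\bmod\,\sqrt{-3})$ cuts this down to $a\equiv d\equiv 1\,(\bmod\, 3)$ together with the extra entry $\varepsilon\in\mu_3$. For $\gamma=\left(\begin{smallmatrix} a & b\\ c& d\end{smallmatrix}\right)\in\Gamma_1(3)$ a short computation on the locus $u=0$ shows that its image $\tilde\gamma$ acts by $(u,\sqrt{-3}\,\tau)\mapsto\bigl(u/(c\tau+d),\ \sqrt{-3}\,(a\tau+b)/(c\tau+d)\bigr)$, with $j_1(\tilde\gamma,u,\sqrt{-3}\,\tau)=c\tau+d$ and $\det(\tilde\gamma)=1$. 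Substituting the Taylor expansion (6) into $f(\tilde\gamma\cdot(u,v))=j_1(\tilde\gamma,u,v)^{k}\det(\tilde\gamma)^{\ell}f(u,v)$ and comparing coefficients of $u^n$ will give $f_n\bigl((a\tau+b)/(c\tau+d)\bigr)=(c\tau+d)^{k+n}f_n(\tau)$; together with holomorphy at the cusps (treated below) this shows $f_n\in M_{k+n}(\Gamma_1(3))$.

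Next, for the congruence $n\equiv\ell\,(\bmod\,3)$, I would use that ${\rm diag}(1,1,\rho)$ lies in $\Gamma[\sqrt{-3}]$ (since $\rho\equiv 1\,(\bmod\,\sqrt{-3})$) and fixes $C$, acting by $(u,v)\mapsto(\rho\,u,v)$ with $j_1\equiv 1$ and determinant $\rho$. The functional equation then reads $f(\rho\,u,\sqrt{-3}\,\tau)=\rho^{\ell}f(u,\sqrt{-3}\,\tau)$, and comparing coefficients of $u^n$ forces $\rho^n f_n=\rho^{\ell}f_n$, hence $f_n=0$ unless $n\equiv\ell\,(\bmod\,3)$.

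For cuspidality I would argue as follows. The curve $C$ passes through two of the four cusps of the Picard surface, and by the $S_4$-symmetry permuting the curves $C_{ij}$ and the cusps it is enough to treat the cusp $(1:0:0)$, which is the one reached along $C$ as $\tau\to i\infty$. Near it $f$ has the Fourier--Jacobi expansion $f(u,v)=\sum_{m\ge 0}\phi_m(u)\,w^m$, $w=e^{2\pi v/\sqrt 3}$, of Section~\ref{FJ}, written in the same coordinate $u$ as in (6); and on $C$ one has $w=e^{2\pi\sqrt{-3}\,\tau/\sqrt 3}=e^{2\pi i\tau}=q$. Expanding each $\phi_m$ around $u=0$ and matching with (6), the $q$-expansion of $f_n$ has constant term equal to the $n$-th Taylor coefficient of $\phi_0$ at $u=0$. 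But $\phi_0$ is a section of $L^{\otimes 0}={\mathcal O}_E$ --- equivalently, an entire $\sqrt{-3}\,O_F$-periodic function --- hence constant; so that coefficient vanishes for $n>0$, and $f_n$ vanishes at $(1:0:0)$ and, by symmetry, at the other cusp of $C$. Hence $f_n$ is a cusp form for $n>0$.

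I expect the cuspidality step to be the main obstacle: one has to be careful to identify the coordinate $u$ transverse to $C$ with the elliptic-curve coordinate of the Fourier--Jacobi expansion at \emph{each} of the two cusps lying on $C$, and to check that $w$ restricts there to the local nome. Once these coordinate identifications are in place the vanishing is immediate from $\phi_0$ being constant, and parts (1) and (3) are routine coefficient comparisons.
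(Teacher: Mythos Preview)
Your proposal is correct and follows precisely the approach the paper indicates: the paper's entire argument is the sentence ``The functional equation of $f$ implies the following proposition'' (and, for the vector-valued analogue that follows, the additional remark that the congruence on $n$ comes from applying ${\rm diag}(1,1,\rho)$). You have simply written out what that sentence means --- the coefficient comparison under $\Gamma_1(3)\hookrightarrow\Gamma[\sqrt{-3}]$ for the weight, the element ${\rm diag}(1,1,\rho)$ for the congruence $n\equiv\ell\pmod 3$, and the Fourier--Jacobi constant term $\phi_0\in H^0(E,\mathcal O_E)=\mathbb C$ for cuspidality --- so there is no methodological difference to discuss. Your caveat about the second cusp on $C$ is well taken but easily handled: an element of $\Gamma$ representing the transposition $(12)\in S_4$ preserves $C_{34}$ and exchanges its two cusps $c_1,c_2$, and since $\Gamma[\sqrt{-3}]$ is normal in $\Gamma$ the slashed form $f|_{0,k}g$ is again in $M_{0,k}(\Gamma[\sqrt{-3}],\det^{\ell'})$, transporting the vanishing from one cusp to the other.
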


\begin{example} Writing $\varphi_i=\sum \varphi_{i,n} u^n$ we have
$$
\begin{aligned}
\varphi_0 & = \varphi_{0,0}+\varphi_{0,6} u^6+O(u^{12}), \\
\varphi_1 & = \varphi_{1,0}+\varphi_{1,3} u^3+O(u^{6}), \\
\varphi_2 & = \varphi_{1,0}-\varphi_{1,3} u^3+O(u^{6}), \\
\end{aligned}
$$
with 
$$
\begin{aligned}
\varphi_{0,0}=& 1 + 18\, q + 108\, q^2 + 234\, q^3 + 234\, q^4 + O(q^5), \\
\varphi_{1,0}=& \varphi_{2,0}=\sqrt{-3} \, (
1 - 9\, q + 27\, q^2 - 9\, q^3  - 117\, q^4 + O(q^5)).\\
\end{aligned}
$$
For the modular form $\zeta$ of weight $6$ we have 
$$
\zeta(u,\sqrt{-3}\tau)=\zeta_1 \, u+ \zeta_7 \, u^7+ O(u^{13}),
$$
with $\zeta_1 \in S_7(\Gamma_1(3))$ and $\zeta_7 \in S_{13}(\Gamma_1(3))$.
\end{example}

Similarly, we can develop vector-valued modular forms along the curve $C$.
We write such a modular form $F \in M_{j,k}(\Gamma[\sqrt{-3}],{\rm det}^{\ell})$ 
as
$$
F(u,\sqrt{-3}\tau)= \sum_{n=0}^{\infty} \left( \begin{matrix}
F_n^{(1)} \\
\vdots \\
F_n^{(j+1)}\\
\end{matrix} \right) \, u^n \, .
$$
\begin{proposition}\label{expansion2}
The first component $F_n^{(1)}$ is a modular form of weight $k+n$ on 
$\Gamma_1(3)$ and a cusp form if $n>0$. Moreover $F_n^{(m)}$ 
vanishes unless $ n +(j+1-m) \equiv \ell \, (\bmod \, 3)$. The function
$F_0^{(m)}$ is a modular form of weight $k+m-1$ on $\Gamma_1(3)$, while
for $n>0$ the function 
$F_n^{(m)}$ is a quasi-modular form of weight $k+m+n-1$ on $\Gamma_1(3)$.
\end{proposition}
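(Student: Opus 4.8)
The plan is to restrict the functional equation of $F$ to a two–dimensional slice containing the curve $C$ and to read off the transformation behaviour of the individual Taylor coefficients $F_n^{(m)}$. By the lemma above, the stabiliser of $C=C_{34}$ in $\Gamma$ consists of the matrices $g=g(\gamma_0,\varepsilon)=\left(\begin{smallmatrix} a & \sqrt{-3}\,b & 0\\ c/\sqrt{-3} & d & 0\\ 0&0&\varepsilon\end{smallmatrix}\right)$ with $\gamma_0=\left(\begin{smallmatrix}a&b\\c&d\end{smallmatrix}\right)\in\Gamma_0(3)$ and $\varepsilon\in O_F^*$. Intersecting with $\Gamma[\sqrt{-3}]$ and using that $a,d\in\ZZ$ while $\rho\equiv1$ but $-1\not\equiv1\pmod{\sqrt{-3}}$, one finds that the stabiliser of $C$ in $\Gamma[\sqrt{-3}]$ is exactly $\{g(\gamma_0,\varepsilon):\gamma_0\in\Gamma_1(3),\ \varepsilon\in\mu_3\}$. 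A short computation shows that each such $g$ maps the slice $\{(u,\sqrt{-3}\tau)\}$ into itself, acting by $(u,\tau)\mapsto\bigl(\varepsilon u/(c\tau+d),\,\gamma_0\tau\bigr)$; moreover on this slice $j_1(g,u,\sqrt{-3}\tau)=c\tau+d$ and, by formula $(3)$, $j_2(g,u,\sqrt{-3}\tau)=\left(\begin{smallmatrix}\varepsilon^{-1}&0\\ uc/\sqrt{-3}& c\tau+d\end{smallmatrix}\right)$, so that $\mathrm{Sym}^j(j_2)$ is lower triangular with $(m,m')$–entry a nonzero binomial multiple of $\varepsilon^{\,m-j-1}(c/\sqrt{-3})^{\,m-m'}(c\tau+d)^{\,m'-1}u^{\,m-m'}$.

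Substituting $v=\sqrt{-3}\tau$ into $F(g\cdot(u,v))=\det(g)^{\ell}j_1^{k}\,\mathrm{Sym}^j(j_2)\,F(u,v)$, expanding both sides as power series in $u$ and comparing the coefficient of $u^{n}$ yields, for all $\gamma_0\in\Gamma_1(3)$ and $\varepsilon\in\mu_3$, the master identity
$$
F_n^{(m)}(\gamma_0\tau)=\varepsilon^{\,\ell+m-j-1-n}\,(c\tau+d)^{k+n}\sum_{r=0}^{m-1}
c_{m,r}\,\bigl(c/\sqrt{-3}\bigr)^{r}(c\tau+d)^{\,m-1-r}\,F^{(m-r)}_{n-r}(\tau),
$$
with explicit constants $c_{m,r}$, $c_{m,0}=1$, and the convention $F^{(m')}_{n'}=0$ for $n'<0$. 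Taking $\gamma_0=1$, $\varepsilon=\rho$ (i.e.\ evaluating the functional equation at $g=\mathrm{diag}(1,1,\rho)\in\Gamma[\sqrt{-3}]$) only the $r=0$ term survives, giving $F_n^{(m)}=\rho^{\,\ell+m-j-1-n}F_n^{(m)}$, hence $F_n^{(m)}=0$ unless $n+(j+1-m)\equiv\ell\pmod 3$; this is the vanishing assertion. Assuming this congruence and taking $\varepsilon=1$: if $n=0$ all terms with $r\ge1$ drop out and the identity becomes $F_0^{(m)}(\gamma_0\tau)=(c\tau+d)^{k+m-1}F_0^{(m)}(\tau)$, so $F_0^{(m)}$ is modular of weight $k+m-1$ on $\Gamma_1(3)$; and if $m=1$ only the $r=0$ term is ever present, so $F_n^{(1)}(\gamma_0\tau)=(c\tau+d)^{k+n}F_n^{(1)}(\tau)$.

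For the remaining range $m>1$, $n>0$ we induct on $m$. Dividing the master identity by $(c\tau+d)^{k+m+n-1}$ rewrites it as
$$
(c\tau+d)^{-(k+m+n-1)}F_n^{(m)}(\gamma_0\tau)=\sum_{r=0}^{m-1}\widetilde c_{m,r}\Bigl(\tfrac{c}{c\tau+d}\Bigr)^{r}F^{(m-r)}_{n-r}(\tau),\qquad \widetilde c_{m,0}=1 ,
$$
and by the inductive hypothesis each $F^{(m-r)}_{n-r}$ is either zero or a (quasi-)modular form on $\Gamma_1(3)$ of weight $k+(m-r)+(n-r)-1=(k+m+n-1)-2r$. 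Thus the right–hand side displays $F_n^{(m)}$ as a holomorphic function whose automorphy factor relative to weight $k+m+n-1$ is a polynomial of degree $\le m-1$ in $c/(c\tau+d)$ with $\gamma_0$–independent holomorphic coefficients, i.e.\ a quasi-modular form of weight $k+m+n-1$ and depth $\le m-1$ on $\Gamma_1(3)$, as claimed.

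Finally we would check holomorphy at the cusps, which also yields the cuspidality of $F_n^{(1)}$ for $n>0$. The curve $C$ meets the Satake boundary in two cusps of the Picard surface, and the two cusps of $\Gamma_1(3)$ are carried to these. Near such a cusp $F$ has a Fourier–Jacobi expansion $F=\sum_{l\ge0}f_l\,w^{l}$ as in Section~\ref{FJ-vector} with $w\to0$ at the cusp, and $f_0$ is a global section of $\mathrm{Sym}^jA$ on the resolving elliptic curve; since $A$ is indecomposable, $f_0$ is a constant vector supported in its first coordinate. Hence $F_n^{(m)}(\tau)=\tfrac1{n!}\,\partial_u^{\,n}F^{(m)}(u,\sqrt{-3}\tau)\big|_{u=0}$ is a power series in $w$ with nonnegative exponents — so holomorphic at the cusp — whose constant term in $w$ vanishes whenever $n>0$ (and also whenever $m>1$); applying this at both cusps (the second one after moving it to the cusp $(1:0:0)$ by an element of $S_4$) shows in particular that $F_n^{(1)}$ is a cusp form for $n>0$. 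The main obstacle is the inductive bookkeeping — matching the weight $k+m+n-1$ and depth $\le m-1$ of $F_n^{(m)}$ against the weights and depths of the lower coefficients $F^{(m-r)}_{n-r}$ — together with getting the normalisation of $j_2$ on the slice right; the remaining computations are routine.
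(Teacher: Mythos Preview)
Your argument is correct and follows exactly the approach the paper sketches in two lines (``write out the transformation behaviour'' and ``apply $\mathrm{diag}(1,1,\rho)$''); you have simply carried this out in full detail, including the identification of the stabiliser of $C$ in $\Gamma[\sqrt{-3}]$, the explicit form of $j_2$ on the slice, and the Fourier--Jacobi argument for holomorphy and cuspidality at the boundary. One small remark: the induction on $m$ in the quasi-modularity step is not actually needed, since your master identity already exhibits $(c\tau+d)^{-(k+m+n-1)}F_n^{(m)}(\gamma_0\tau)$ as a polynomial in $c/(c\tau+d)$ with holomorphic, $\gamma_0$-independent coefficients $\tilde c_{m,r}F_{n-r}^{(m-r)}(\tau)$ --- this is precisely the Kaneko--Zagier definition of a quasi-modular form of weight $k+m+n-1$ and depth $\le m-1$.
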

\begin{proof}
We refer to \cite{K-Z} for the definition of a quasi-modular form.
The proof of the first statement follows from writing out the 
transformation behavior. 
The second statement follows by applying ${\rm diag}(1,1,\rho)$.
\end{proof}
\end{section}

\begin{section}{Rankin-Cohen Brackets}\label{sec:Rankin-Cohen}
We now construct vector-valued modular forms by a variant of the
Rankin-Cohen brackets. Recall that we have the relation
$$
j_2(g,u,v)^{-1}= j_1(g,u,v) (J(g,u,v))^t,
$$
between the automorphy factors and the Jacobian of the group action on the 
ball $B$. This implies that for a differentiable function $f: B \to {\CC}$
with gradient
$$
\nabla f = \left( \begin{smallmatrix} \frac{\partial f}{\partial u} \\
\\ 
\frac{\partial f}{\partial v}
\end{smallmatrix}\right)
$$
we get using coordinates $b=(u,v)$ on $B$ and writing $g=(g_{ij}) \in G$
$$
\nabla \left( \frac{f(g\cdot b)}{j_1(g,b)^k} \right)= -k 
\frac{f(g\cdot b)}{j_1(g,b)^{k+1}} 
\left(\begin{matrix} g_{23}\\ g_{21}\end{matrix}\right) +
\frac{1}{j_1(g,b)^{k+1}} 
 j_2(g,b)^{-1}  \nabla f(g\cdot b) \, .
$$
We can get rid of the first term on the right hand side 
by using a bracket. 

\begin{definition}\label{crochet}
For $k, l\in {\ZZ}_{\geq 1}$ and $f, h: B\to \CC$
differentiable functions we put
$$
\left[f,h\right]_{k,l}(b)=
\frac{1}{l} f(b)\nabla h(b) - \frac{1}{k} h(b)\nabla f(b)\, .
$$
\end{definition}

A straightforward computation leads to the following proposition.

\begin{proposition}\label{crochetprop}
For every unitary similitude $g$ and functions $f,h: B \to {\CC}$ 
we have
\begin{align*}
\left[f\vert_k g,h\vert_l g\right]_{k,l}(b)=
&j_1(g,b)^{-k-l-1}j_2(g,b)^{-1}\left[f,h\right]_{k,l}(g.b)\\
=&(\left[f,h\right]_{k,l}\vert_{1,k+l+1}\ g)(b).
\end{align*}
\end{proposition}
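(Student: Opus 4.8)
The plan is a direct computation resting on three ingredients already in hand: the gradient identity for a slashed function displayed just before Definition~\ref{crochet}, the relation (3) between the automorphy factors and the Jacobian, and the crucial structural observation that the ``unwanted'' first term in that gradient identity is a scalar multiple of a vector $\left(\begin{smallmatrix} g_{23}\\ g_{21}\end{smallmatrix}\right)$ depending only on $g$ and $b$. Because this vector does not involve the function being slashed, the antisymmetrised combination defining the bracket, weighted by $1/l$ and $1/k$, is tailor-made to cancel it.

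First I would rewrite the gradient identity, using $f(g\cdot b)=j_1(g,b)^k\,(f|_k g)(b)$ in its first term, as
$$
\nabla(f|_k g)(b) = -k\, j_1^{-1}\,(f|_k g)(b)\left(\begin{smallmatrix} g_{23}\\ g_{21}\end{smallmatrix}\right) + j_1^{-k-1}\, j_2^{-1}\,\nabla f(g\cdot b),
$$
where I abbreviate $j_1=j_1(g,b)$, $j_2=j_2(g,b)$ and $f|_k g=f|_{0,k}g$, and similarly for $h$ with $l$ in place of $k$. Substituting these into $[f|_k g,h|_l g]_{k,l}(b)=\tfrac1l (f|_k g)(b)\,\nabla(h|_l g)(b)-\tfrac1k (h|_l g)(b)\,\nabla(f|_k g)(b)$, the two contributions proportional to $\left(\begin{smallmatrix} g_{23}\\ g_{21}\end{smallmatrix}\right)$ are $-j_1^{-1}(f|_k g)(b)(h|_l g)(b)\left(\begin{smallmatrix} g_{23}\\ g_{21}\end{smallmatrix}\right)$ and $+j_1^{-1}(h|_l g)(b)(f|_k g)(b)\left(\begin{smallmatrix} g_{23}\\ g_{21}\end{smallmatrix}\right)$, which cancel. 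What remains is
$$
\tfrac1l (f|_k g)(b)\, j_1^{-l-1} j_2^{-1}\nabla h(g\cdot b) - \tfrac1k (h|_l g)(b)\, j_1^{-k-1} j_2^{-1}\nabla f(g\cdot b),
$$
and inserting $(f|_k g)(b)=j_1^{-k}f(g\cdot b)$ and $(h|_l g)(b)=j_1^{-l}h(g\cdot b)$ and pulling out $j_1^{-k-l-1}j_2^{-1}$ yields exactly $j_1^{-k-l-1}j_2^{-1}[f,h]_{k,l}(g\cdot b)$, the first claimed equality.

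For the second equality I would just unwind the slash operator $|_{1,k+l+1}$ from Section~\ref{sec:defs}: applied to the $\mathrm{Sym}^1(\CC^2)$-valued function $[f,h]_{k,l}$ it reads $([f,h]_{k,l}|_{1,k+l+1}g)(b)=j_1^{-k-l-1}\,\mathrm{Sym}^1(j_2^{-1})\,[f,h]_{k,l}(g\cdot b)$, and $\mathrm{Sym}^1$ of a $2\times 2$ matrix is that matrix, so this equals the right-hand side already computed.

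I do not expect a real obstacle: the content is bookkeeping. The only points requiring care are the exact matching of powers of $j_1$ --- so that the prefactors $-k$, $-l$ of the unwanted terms are killed precisely by the $1/k$, $1/l$ of the bracket --- and the sign conventions in (3), which give $\nabla(f\circ g)(b)=J(g,b)^t\,\nabla f(g\cdot b)=j_1^{-1}j_2^{-1}\,\nabla f(g\cdot b)$ and hence produce the factor $j_2^{-1}$ responsible for the weight-$(1,k+l+1)$ transformation law.
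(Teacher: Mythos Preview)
Your proposal is correct and is exactly the straightforward computation the paper alludes to (the paper gives no details beyond ``A straightforward computation leads to the following proposition''). The cancellation of the $\left(\begin{smallmatrix} g_{23}\\ g_{21}\end{smallmatrix}\right)$-terms via the $1/k$, $1/l$ weighting and the bookkeeping of the $j_1$-powers are precisely the point, and your verification of the second equality via the definition of $|_{1,k+l+1}$ is fine.
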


Let $\Gamma'$ be a finite index subgroup of the Picard modular group and let 
$\chi_1, \chi_2$ be finite order characters. Then we define the bracket
for $f\in M_{k}(\Gamma',\chi_1)$ and $h \in M_{l}(\Gamma',\chi_2)$ by
$$
[f,h]:= [f,h]_{k,l}\, .
$$

\begin{corollary}\label{cons}
Let $f\in M_{k}(\Gamma',\chi_1)$ and $h \in M_{l}(\Gamma',\chi_2)$ 
be scalar-valued Picard modular forms. Then 
$\left[f,h\right]$ is a vector-valued modular cusp form in 
$S_{1,k+l+1}(\Gamma',\chi_1\cdot \chi_2)$.
\end{corollary}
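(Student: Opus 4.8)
The plan is to verify the three conditions making $[f,h] := [f,h]_{k,l}$ an element of $S_{1,k+l+1}(\Gamma',\chi_1\chi_2)$: that it is a holomorphic map $B \to {\rm Sym}^1({\CC}^2)={\CC}^2$, that it transforms correctly under $\Gamma'$, and that it vanishes at every cusp. The first two are formal consequences of what has already been set up; the cusp condition is the one point requiring a genuine argument.

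Holomorphy is immediate: $f$ and $h$ are holomorphic, hence so are $\nabla f$ and $\nabla h$, and each component of $[f,h]_{k,l}=\tfrac1l f\,\nabla h-\tfrac1k h\,\nabla f$ is a holomorphic function on $B$. For the functional equation, observe that $[\,\cdot,\cdot\,]_{k,l}$ is separately linear in each of its arguments (scaling $f$, resp.\ $h$, by a constant scales $[f,h]_{k,l}$ by that constant), so for $g\in\Gamma'$, using $f|_k g=\chi_1(g)f$ and $h|_l g=\chi_2(g)h$ together with Proposition~\ref{crochetprop},
$$
[f,h]_{k,l}\,|_{1,k+l+1}\,g \;=\; [\,f|_k g,\ h|_l g\,]_{k,l} \;=\; \chi_1(g)\chi_2(g)\,[f,h]_{k,l}.
$$
Thus $[f,h]$ satisfies the defining transformation law of weight $(1,k+l+1)$ and character $\chi_1\chi_2$.

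It remains to show vanishing at the cusps. Since $\Gamma$ acts transitively on $\partial B\cap{\PP}^2(F)$, every cusp of $\Gamma'$ is $\Gamma$-equivalent to $(1:0:0)$; choosing $\gamma\in\Gamma$ carrying it there and applying Proposition~\ref{crochetprop} again, $[f,h]\,|_{1,k+l+1}\,\gamma = [\,f|_k\gamma,\ h|_l\gamma\,]_{k,l}$, where $f|_k\gamma$ and $h|_l\gamma$ are again scalar-valued holomorphic modular forms, now on the finite-index subgroup $\gamma^{-1}\Gamma'\gamma$ of $\Gamma$. Hence it suffices to show that for scalar holomorphic modular forms $F,H$ the constant Fourier--Jacobi coefficient of $[F,H]_{k,l}$ at the cusp $(1:0:0)$ is zero. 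Writing $F=\sum_{n\geq 0}F_n(u)\,w^n$ and $H=\sum_{n\geq 0}H_n(u)\,w^n$ with $w=e^{2\pi v/\sqrt3}$ and differentiating term by term, the $w^0$-coefficient of $\partial_v F$ is $0$, while that of $\partial_u F$ is $F_0'(u)$; but $F_0$ is invariant under the rank-two lattice of $u$-translations in the unipotent radical fixing the cusp (cf.\ Sections~\ref{ourcase} and~\ref{FJ}), so it descends to a holomorphic function on the boundary elliptic curve and is therefore constant, whence $F_0'=0$. Consequently the constant Fourier--Jacobi coefficient of $\nabla F$, and likewise of $\nabla H$, vanishes, and since $[F,H]_{k,l}=\tfrac1l F\,\nabla H-\tfrac1k H\,\nabla F$, its $w^0$-coefficient is $\tfrac1l F_0\cdot 0-\tfrac1k H_0\cdot 0=0$. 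As the Fourier--Jacobi expansion of $[F,H]_{k,l}$ has no negative powers of $w$ to begin with, this says exactly that $[f,h]$ is a cusp form, so $[f,h]\in S_{1,k+l+1}(\Gamma',\chi_1\chi_2)$.

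The main obstacle is this last step, and within it the point that the constant Fourier--Jacobi coefficient of a scalar Picard modular form is literally a constant function of $u$. This is where the signature $(2,1)$ structure enters decisively: the rank-two lattice of $u$-translations sits over the rank-one lattice generating the $w$-expansion, and once one knows the resulting elliptic curve forces constancy, the single derivative built into the bracket automatically kills the boundary term. Everything else is a routine unwinding of definitions using Proposition~\ref{crochetprop}.
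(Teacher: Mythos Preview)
Your proof is correct and follows the same route as the paper: the transformation law comes directly from Proposition~\ref{crochetprop}, and the cusp condition is checked via the Fourier--Jacobi expansion. The paper's own proof is much terser, saying only that ``differentiation kills constant terms in the Fourier--Jacobi expansions''; you have supplied the detail the paper leaves implicit, namely that the $w^0$-coefficient $f_0(u)$ of a scalar modular form is a global holomorphic section of the trivial bundle on the boundary elliptic curve and hence a constant in $u$, so that $\partial_u$ as well as $\partial_v$ annihilates it.
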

\begin{proof}
In view of Prop.\ \ref{crochetprop} 
the only thing to check is that we obtain a cusp form, that is, 
the Fourier-Jacobi expansions at the different cusps of the group 
have no constant terms. This is immediate as differentiation
kills constant terms in the Fourier-Jacobi expansions.
\end{proof}

\end{section}

\begin{section}{Modules of Vector-valued Picard Modular Forms}\label{modules}
We denote the vector space of Picard modular modular forms of weight $(j,k)$
on the group 
$\Gamma_1[\sqrt{-3}]$ (resp.\ $\Gamma[\sqrt{-3}]$)
by $M_{j,k}(\Gamma_1[\sqrt{-3}])$ (resp.\ by $M_{j,k}(\Gamma[\sqrt{-3}])$).
Note that $-1_3$ acts by $(-1)^k {\rm Sym}^j(-1_2)$;  we thus
can decompose $M_{j,k}(\Gamma_1[\sqrt{-3}])$ as
$$
M_{j,k}(\Gamma_1[\sqrt{-3}])=M_{j,k}(\Gamma[\sqrt{-3}])
\oplus M_{j,k}(\Gamma[\sqrt{-3}],\det{})
\oplus (\Gamma[\sqrt{-3}],\det{}^2).
$$
The corresponding spaces of cusp forms are denoted by $S_{j,k}$.
We thus have modules 
$$
{\mathcal M}_j={\mathcal M}_j^0\oplus {\mathcal M}_j^1 \oplus {\mathcal M}_j^2
$$ 
with
${\mathcal M}_j^{\ell}= \oplus_k M_{j,k}(\Gamma[\sqrt{-3}],\det{}^{\ell})$
and for the cusp forms
$$
\Sigma_j= \oplus_k S_{j,3k+j}(\Gamma_1[\sqrt{-3}])
=\Sigma_j^0\oplus \Sigma_j^1\oplus \Sigma_j^2 \, .
$$
Note that by Proposition \ref{existenceEisenstein} we have 
${\mathcal M}_j^{\ell}=
\Sigma_j^{\ell}$ if ${\ell}\not\equiv j \, (\bmod \, 3)$.
These are modules over
$M={\mathcal M}_0^0=
\oplus_r M_{3r}(\Gamma[\sqrt{-3}])={\CC}[\varphi_0, \varphi_1,\varphi_2]$.

By using the Hirzebruch-Riemann-Roch theorem one can show that for $j+3k>4$ 
we have
$$
\dim M_{j,j+3k}(\Gamma_1[\sqrt{-3}])= \frac{3\, (k-1) (j+1) (j+k)}{2}  
+ \frac{j(j+1)(j+2)}{3}+c
$$
where $c$ is a constant depending only on congruences for $j$.
We have $c=4,2,2,4$ for $j=0,1,2,3$. 
We refer to \cite{B-vdG} for this. 
In fact, we have by the holomorphic Lefschetz formula for
$j\equiv 2 \, (\bmod \, 3)$ the more precise formula
$$
\dim M_{j,j+3k}(\Gamma[\sqrt{-3}],{\rm det}^{\ell}) = 
\frac{j+1}{2}\, k^2+ \frac{j^2-1}{2} k  +c^{\prime}
$$
while for $j\not\equiv 2 \, (\bmod \, 3)$ we have
$$
\dim M_{j,j+3k}(\Gamma[\sqrt{-3}],{\rm det}^{\ell})
=\frac{j+1}{2}\, k^2+ (\frac{j^2-1}{2}+\varepsilon) \, k + c^{\prime\prime}
$$
with $\varepsilon$ given by
$$
\varepsilon = \begin{cases}
2 & j\equiv \ell \, (\bmod \, 3), \ell\neq 2 \\
0 &  j \not\equiv \ell \, (\bmod \, 3), \ell \neq 2 \\
-2 & \ell=2 \\
\end{cases}
$$
and with $c'$ and $c^{\prime\prime}$ not depending on $k$.
In fact, given such formulas, our cohomological calculations in
\cite{B-vdG} determine the constants $c,c'$ and $c^{\prime \prime}$ for small $j$.
\end{section}

\begin{section}{Examples of Vector-valued Picard Modular Forms}\label{examples}

\begin{subsection}{Forms in $S_{1,7}$}

As a first example we consider the forms
$$
\Phi_0=-\frac{[\varphi_1,\varphi_2]}{6\pi \sqrt{-1}}, 
\quad  \Phi_1=-\frac{[\varphi_2,\varphi_0]}{6\pi \sqrt{-1}}, 
\quad \Phi_2=-\frac{[\varphi_0,\varphi_1]}{6\pi\sqrt{-1}} \, .
$$
By Corollary \ref{cons} 
these forms belong to $S_{1,7}(\Gamma[\sqrt{-3}])$ and they are linearly
independent as one sees by calculating the Fourier-Jacobi expansions,
see below.
Since the $\varphi_i$ generate the $S_4$-representation $s[2,1,1]$ the $\Phi_i$
generate the $S_4$-representation $\wedge^2 s[2,1,1]=s[2,1,1]$. 
To make the action of $S_4$ more transparent we define 
$$
X_1=\Phi_0+\Phi_1+\Phi_2, \quad X_2=-\Phi_0, \quad X_3=-\Phi_1,\quad X_4=-\Phi_2
$$
and observe that $\sum_{i=1}^4 X_i=0$ and 
the action of $\sigma \in S_4$ is by $X_i \mapsto
{\rm sgn}(\sigma) X_{\sigma(i)}$.

We find the Fourier-Jacobi expansions
$$
\left(\begin{matrix}
\Phi_0^{(1)} \\ \Phi_0^{(2)} \\ 
\end{matrix} \right) =
\left(\begin{matrix}
 \frac{\sqrt{3}}{2\pi} 
((Y'-Z')\, w -3(2YY'+ 3Y'Z- 3YZ'-2ZZ')\, w^2 + \ldots
\\
(Y-Z)\, w +(-6\, Y^2+6\, Z^2)\, w^2+ \ldots
\\
\end{matrix}\right)
$$
Here the primes refer to the derivative with respect to $u$.
The corresponding expansions for $\Phi_i$ are obtained from this one 
by substituting $(\rho^i Y,\rho^{2i}Z)$ for $(Y,Z)$.

We determine the expansion of the
$\Phi_i$ along the curve $C_{34}$ given by 
$\{(0,\sqrt{-3}\tau): \tau \in {\mathcal H}\} \subset B$.  We find
$$
\Phi_0(u,\sqrt{-3}v)=\left( \begin{matrix}
g_2 u^2+g_8 u^8 +O(u^{14}) \\
g_3 u^3+O(u^9)\\ \end{matrix} 
\right)
$$
with $g_2=q-15\, q^2+O(q^3) \in S_{9}(\Gamma_1(3))$, $g_8 \in S_{15}(\Gamma_1(3))$ and
$g_3$ a quasi-modular form of weight $11$ on $\Gamma_1(3)$. Moreover,
$$
\Phi_1(u,\sqrt{-3}v)=\left( \begin{matrix}
h_2 u^2 + O(u^5)\\
h_0+h_3u^3+O(u^6)\\
\end{matrix} \right) 
$$
and
$$
\Phi_2(u,\sqrt{-3}v)=\left( \begin{matrix}
h_2 u^2 + O(u^5)\\
-h_0+h_3u^3+O(u^6)\\
\end{matrix} \right) 
$$
with 
$h_0\in S_8^{\rm new}(\Gamma_0(3))$, $h_2 =q+12\, q^2 +O(q^3)
\in S_9(\Gamma_1(3))$ and $h_3$ quasi-modular of weight $11$ for $\Gamma_1(3))$.

By relation (1)  the form $\Phi_1\wedge \Phi_2$ is a scalar-valued 
modular form of weight $15$ and character ${\rm det}^2$.
Using the Fourier-Jacobi expansion we see that 
$$
\Phi_1\wedge \Phi_2= 2\pi \sqrt{-1}  (Y' Z - Z' Y)\, w^2  + \ldots
$$
and up to a factor $1/Z^2$ the coefficient of the
first term is the derivative of $Y/Z$ which
is not constant. 
Since $\Phi_1\wedge \Phi_2 \in S_{15}(\Gamma[\sqrt{-3}],{\rm det}^2)$,
it is divisible by $\zeta^2$; in fact, of the form $f\zeta^2$ with $f$ of weight $3$; using the action of $S_4$ we see that
there is a non-zero constant $c \in {\CC}$ such that
$$
\Phi_1\wedge \Phi_2=c \,  \zeta^2 \, \varphi_0 \, . \eqno(7)
$$
We now draw an important conclusion about the vanishing locus of the forms
$\Phi_i$ (or $X_i$).

\begin{corollary}\label{vanishingPhi}
The forms $X_i$ with $i=1,2,3,4$ do not vanish outside the union of the
modular curves $C_{ij}$. More precisely, the vanishing locus of $X_i$ 
consists of the three curves $C_{jk}$, $C_{jl}$ and $C_{kl}$ 
passing through cusp $c_i$.
\end{corollary}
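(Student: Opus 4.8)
The plan is to prove the two inclusions separately, using three inputs: the explicit expansion of $\Phi_0$ along $C_{34}$ displayed above, the $S_4$-equivariance $X_i\mapsto {\rm sgn}(\sigma)\,X_{\sigma(i)}$ together with $C_{ij}\mapsto C_{\sigma(i)\sigma(j)}$, and the $S_4$-orbit of relation $(7)$. Since $X_i\in S_{1,7}(\Gamma[\sqrt{-3}])$ is a section of a rank-$2$ bundle, its vanishing locus is a priori a union of curves and finitely many isolated points, and the argument must control both.

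For the inclusion $\supseteq$, I would observe that in $\Phi_0(u,\sqrt{-3}\tau)=(g_2u^2+\cdots,\ g_3u^3+\cdots)$ both components vanish at $u=0$ (in fact to order $\geq 2$), so $\Phi_0=-X_2$ vanishes identically along $C_{34}$. The stabilizer of the index $2$ in $S_4$ permutes $\{C_{13},C_{14},C_{34}\}$ transitively and sends $X_2$ to $\pm X_2$, so $X_2$ vanishes on all three curves through $c_2$; applying an element $\sigma$ with $\sigma(2)=i$ then gives that $X_i$ vanishes on the three curves $C_{jk},C_{jl},C_{kl}$ through $c_i$.

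For $\subseteq$ I would first handle points off all six curves. Applying $S_4$ to $(7)$, and using that $\zeta$ vanishes only on $\bigcup_{a<b}C_{ab}$ and that $\varphi_0=\tfrac{1}{4}(x_1-x_2)$ cuts out $C_{12}$, one gets for every pair $\{i,j\}$ with complement $\{k,l\}$ a relation $X_i\wedge X_j=\pm c\,\zeta^2\,\varphi_{kl}$ with $c\neq 0$, where $\varphi_{kl}=\tfrac{1}{4}(x_k-x_l)$ is the weight-$3$ form cutting out $C_{kl}$. If $p\notin\bigcup_{a<b}C_{ab}$ then the right-hand side is nonzero at $p$, whereas $X_i(p)=0$ would force $(X_i\wedge X_j)(p)=0$; hence $X_i$ is nowhere zero outside $\bigcup_{a<b}C_{ab}$, which is already the first assertion of the corollary. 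It then remains to exclude zeros of $X_i$ on the three curves \emph{not} through $c_i$, away from the points these share with the three curves through $c_i$. For $i=2$ the curves in question are $C_{12},C_{23},C_{24}$, and for each one I would pick $\sigma\in S_4$ carrying it to $C_{34}$, hence $X_2$ to $\pm X_3$ or $\pm X_4$. By the expansions quoted above, $\Phi_1=-X_3$ and $\Phi_2=-X_4$ restrict along $C_{34}$ at $u=0$ to $(0,\pm h_0)$ with $0\neq h_0\in S_8^{\rm new}(\Gamma_0(3))$; so neither vanishes identically on $C_{34}$, and a valence-formula count (using $\dim S_8(\Gamma_0(3))=1$, which forces all zeros of $h_0$ into the two cusps and the order-$3$ elliptic point of $\Gamma_0(3)$) shows that $h_0$ vanishes on $C_{34}$ only at the cusps $c_1,c_2$ and the quotient singularity $p_{12,34}$. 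Transporting back by $\sigma$ and checking the incidence of the six lines, four cusps and three quotient singularities (e.g.\ $c_3\in C_{14}$, $c_4\in C_{13}$, $p_{12,34}\in C_{34}$), every zero of $X_2$ on $C_{12}\cup C_{23}\cup C_{24}$ already lies on one of $C_{13},C_{14},C_{34}$. Combining the three steps gives that the vanishing locus of $X_i$ is exactly $C_{jk}\cup C_{jl}\cup C_{kl}$.

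I expect this last step to be the main obstacle: the $\wedge$-product argument degenerates along the six curves, where $\zeta$ vanishes, so one must instead propagate the explicit expansion along $C_{34}$ by the $S_4$-symmetry and then pin down the residual zeros of $h_0$ — and, if the precise vanishing orders are wanted, of $g_2$ and $h_2$ — using the configuration of the six modular curves.
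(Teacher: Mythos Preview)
Your proof is correct and follows the paper's approach: both use the expansions of $\Phi_0,\Phi_1,\Phi_2$ along $C_{34}$ together with the $S_4$-equivariance to see that $X_i$ vanishes on the three curves through $c_i$, and both appeal to the valence formula for the weight-$8$ form $h_0\in S_8(\Gamma_0(3))$ to control the residual zeros on the other three curves. Your write-up is in fact more explicit than the paper's terse argument, since you spell out the wedge-product step from the $S_4$-orbit of relation~$(7)$ that excludes zeros off all six curves---a point the paper's proof passes over.
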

\begin{proof}
From the expansions given above 
 we deduce that $\Phi_0$ vanishes on three
of the six $C_{ij}$. On the other three $C_{ij}$ the first component
vanishes, while second component is a non-zero modular forms of weight $8$
on $\Gamma_0(3)$. Since it vanishes on the intersections of the $C_{ij}$
we see that there cannot be more zeros in view of the formula for the number
of zeros of a modular form on $\Gamma_0(3)$.
\end{proof}

\end{subsection}
\begin{subsection}{Forms in $S_{1,7}(\Gamma[\sqrt{-3}],\det{})$}
\label{sec:det1-case}
Recall that according to \cite{Feustel} the form 
$\zeta$ satisfies the identity
$\zeta=c_{\zeta} \, \prod_{0\leq i \leq 5}
\vartheta_i\in S_{6}(\Gamma[\sqrt{-3}],{\rm{det}})$ 
with $c_{\zeta} \in {\CC}^*$
and $\varphi_k=\vartheta_k^3$ for $0\leq k \leq 2$.
We form the bracket with one of the forms 
$\varphi_k$ with $k=0,1,2$:
\begin{align*}
[\zeta,\varphi_k]&=c_{\zeta} 
[\prod_{0\leq i \leq 5}\vartheta_i,\vartheta_k^3]
=\frac{c_{\zeta}}{3}(\prod_{0\leq i \leq 5}\vartheta_i)\nabla \vartheta_k^3
-\frac{1}{6}\vartheta_k^3(\nabla (\prod_{0\leq i \leq 5}\vartheta_i))\\
&=c_{\zeta}\vartheta_k^3\Big((\prod_{\substack{0\leq i \leq 5\\ i\neq k}}\vartheta_i)\nabla\vartheta_k
-\frac{1}{6}\nabla (\prod_{0\leq i \leq 5}\vartheta_i)\Big).
\end{align*}
So we can divide by $\vartheta_k^3=\varphi_k$ to obtain
$[\zeta,\varphi_k]/\varphi_k \in S_{1,7}(\Gamma[\sqrt{-3}],{\rm{det}})$.
More generally we put
$$
\gamma_{ij}= \frac{1}{x_i-x_j}\, [\zeta,x_i-x_j] \qquad (\{ i,j,k,l\}=\{ 1,2,3,4\})
$$
and obtain thus six elements in $S_{1,7}(\Gamma[\sqrt{-3}],{\rm det})$
satisfying the relation
$$
\sum_{1\leq i < j \leq 4} \gamma_{ij}=0.
$$
These $\gamma_{ij}$ generate a $5$-dimensional space which decomposes as
$s[2,2]\oplus s[2,1,1]$ as $S_4$-representation. The $s[2,1,1]$-space is generated by
the four elements 
$a_i=\gamma_{jk}+\gamma_{jl}+\gamma_{kl}$ with $\sum a_i=0$, while the
$s[2,2]$-space is generated by the three 
$b_{ij,kl}=\gamma_{ij}+\gamma_{kl}$ satisfying $\sum b_{ij,kl}=0$.

The Fourier-Jacobi expansion of the second component of $6 \gamma_{12}$
is
$$
\begin{aligned}
&-Xw+18X(Y+Z) {w}^{2}- 27X( 2Y^{2}+YZ+2Z^{2} ) w^{3}+ \\
& 88X(Y^{3}+Z^{3}) {w}^{4} 
-18X ( 11Y^{4}-Y^3Z-YZ^3+11Z^{4} ) {w}^{5}+O \left( {w}^{6}  \right)&,\\
\end{aligned}
$$
and the expansion of $\gamma_{13}^{(2)}$ (resp.\ $\gamma_{14}^{(2)}$) is 
obtained by substituting $(\rho Y,\rho^2 Z)$ (resp.\ $(\rho^2 Y,\rho Z)$)
for $(Y,Z)$;
the expansion for $6 \gamma_{34}^{(2)}$
is
$$
Xw-6X(Y+Z)w^2-9X(2Y^2-3YZ+2Z^2)w^3 + O(w^4)
$$
and then $\gamma_{23}^{(2)}$ and $\gamma_{24}^{(2)}$ 
are obtained by subsituting
$(\rho^2 Y,\rho Z)$ (resp.\ $(\rho Y, \rho^2 Z)$).

\smallskip
The relation between the $\gamma_{ij}$ and the $\Phi_i$ is as follows.
\begin{lemma}\label{Phi-G}
We have 
$$
\frac{\zeta}{\varphi_1\varphi_2}\, \Phi_0 = 
-\frac{16\, \zeta X_2}{(x_3-x_1)(x_4-x_1)}=
\frac{1}{3\sqrt{-3}} \, (\gamma_{13}-\gamma_{14}) \, .
$$
\end{lemma}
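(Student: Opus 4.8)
The plan is to verify both equalities directly from the definitions; no analytic input beyond what is already available is required.

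The first equality is pure bookkeeping. From the expressions for $x_1,\dots ,x_4$ in terms of $\varphi_0,\varphi_1,\varphi_2$ one reads off $x_3-x_1=-4\varphi_1$ and $x_4-x_1=-4\varphi_2$, so that $(x_3-x_1)(x_4-x_1)=16\,\varphi_1\varphi_2$; together with $X_2=-\Phi_0$ this gives
$$
-\,\frac{16\,\zeta X_2}{(x_3-x_1)(x_4-x_1)}=-\,\frac{16\,\zeta(-\Phi_0)}{16\,\varphi_1\varphi_2}=\frac{\zeta}{\varphi_1\varphi_2}\,\Phi_0 .
$$

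For the second equality I would use $x_1-x_3=4\varphi_1$ and $x_1-x_4=4\varphi_2$ together with bilinearity of the bracket to rewrite $\gamma_{13}=\frac{1}{x_1-x_3}[\zeta,x_1-x_3]=[\zeta,\varphi_1]/\varphi_1$ and, in the same way, $\gamma_{14}=[\zeta,\varphi_2]/\varphi_2$; that these quotients are holomorphic modular forms is precisely the point established through the $\vartheta$-factorization of $\zeta$ in Section~\ref{sec:det1-case}. Writing out $[\zeta,\varphi_k]=\tfrac13\zeta\nabla\varphi_k-\tfrac16\varphi_k\nabla\zeta$ and subtracting, the two $\nabla\zeta$-terms cancel and one is left with $\tfrac13\zeta\bigl(\nabla\varphi_1/\varphi_1-\nabla\varphi_2/\varphi_2\bigr)$, which by Definition~\ref{crochet} is a constant multiple of $\zeta\,[\varphi_1,\varphi_2]/(\varphi_1\varphi_2)$. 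Substituting $[\varphi_1,\varphi_2]=-6\pi\sqrt{-1}\,\Phi_0$, the definition of $\Phi_0$, then exhibits $\gamma_{13}-\gamma_{14}$ as a nonzero scalar times $\zeta\Phi_0/(\varphi_1\varphi_2)$.

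The only genuine work is pinning that scalar down to $3\sqrt{-3}$. This amounts to combining the weight constants $\tfrac13$ and $\tfrac16$ carried by the bracket, the factor $-6\pi\sqrt{-1}$ in the normalization of $\Phi_0$, and the normalization of $\nabla$ relative to the Fourier--Jacobi variable $w=e^{2\pi v/\sqrt3}$. I expect this constant-chasing to be the main---essentially the only---obstacle. It can be confirmed independently: multiply out the Fourier--Jacobi expansions of $\gamma_{13}-\gamma_{14}$ (obtained from the second-component expansions of the $6\gamma_{1j}$ recorded above) and of $\zeta\,\Phi_0/(\varphi_1\varphi_2)$, and compare the leading coefficients; since both sides lie in the finite-dimensional space $S_{1,7}(\Gamma[\sqrt{-3}],\det)$, agreement to finite order already forces the identity.
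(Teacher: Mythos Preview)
Your proposal is correct and is precisely the ``computation'' the paper alludes to: the first equality is the bookkeeping you wrote out, and for the second you correctly reduce $\gamma_{13}-\gamma_{14}$ to $-\zeta[\varphi_1,\varphi_2]/(\varphi_1\varphi_2)$ via the cancellation of the $\nabla\zeta$-terms, leaving only the scalar to be pinned down (which, as you suggest, is most safely done by matching leading Fourier--Jacobi coefficients). The paper gives no further detail beyond this.
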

\begin{proof}
The proof is just a computation.
\end{proof}
\end{subsection}

\begin{subsection}{Forms in $S_{1,7}(\Gamma[\sqrt{-3}],{\rm det}^2)$ 
and $S_{1,10}(\Gamma[\sqrt{-3}],{\rm det}^2)$}\label{sec:det2-case}

We start by defining a form $\Psi_1$ in $S_{1,7}(\Gamma[\sqrt{-3}],{\rm det}^2)$.
The form $\Psi_1$ is defined as the quotient of the projection of
$\varphi_0\varphi_1\Phi_2$ to the $s[1,1,1,1]$-space in 
$S_{1,13}(\Gamma[\sqrt{-3}])$ divided by $\zeta$:
$$
\Psi_1= \frac{
\varphi_0(\varphi_1-\varphi_0) \, \Phi_0 - 
\varphi_2 (\varphi_2-\varphi_1)\, \Phi_2}{\zeta}\, .
$$
This form behaves in the right way; the only thing to check is that it is
holomorphic and a cusp form. 
Since the divisor of zeta consists of the six curves $C_{ij}$
we have to check holomorphicity along these curves. But $\Psi_1$ is
$S_4$-invariant, hence it suffices to check this along one of the $C_{ij}$.
This can be read off from the Taylor expansion.
The forms $\Psi_1$ vanishes at the cusp~$\infty$. 

The Fourier-Jacobi expansion
of the second component of $\Psi_1$ is up to a non-zero factor
$$
X^2 \, w^2 -24\, X^2 YZ\, w^4 + 34\, X^2(Y^3+Z^3) \, w^5 -81 \, X^2Y^2Z^2 \, w^6+\ldots 
$$

The form $\Psi_2$ in $S_{1,10}(\Gamma[\sqrt{-3}],{\rm det}^2)$
is defined as $F/\zeta$ with $F$ given by
$$
\varphi_0 (\varphi_0-\varphi_1)(\varphi_0+\varphi_1-3\varphi_2)\, \Phi_0
-\varphi_2(\varphi_1-\varphi_2)(\varphi_1+\varphi_2-3\varphi_0)\, \Phi_2\, .
$$
and the second component of 
$\Psi_2$ has Fourier-Jacobi expansion (up to a non-zero factor)
$$
X^2 \, w^2 -6\, X^2YZ \, w^4 + 70 \, X^2(Y^3+Z^3)\, w^5 - 405\, X^2Y^2Z^2\, w^6+\ldots
$$
The group $S_4$ acts on $\Psi_2$ by the sign character.

We finish by calculating some wedge products.
The form $\Psi_1\wedge \Psi_2$ is an $S_4$-anti-invariant scalar-valued
modular form of weight $18$ with trivial character; in fact,
$$
\Psi_1\wedge \Psi_2= 2^2 3^7(\rho-1) c  \, \zeta^3  \eqno(8)
$$
with $c\in {\CC}^*$ given in (7).
One can also calculate the wedge of $\Psi_1$ with 
the space $S_{1,7}(\Gamma[\sqrt{-3}],\det{})$: we have
$$
\Psi_1 \wedge \gamma_{1j}= -\frac{c}{6\sqrt{-3}} 
\, \zeta^2(\varphi_0+\varphi_1+\varphi_2-2\varphi_{j+1})
\quad \hbox{\rm for $j=2,3,4$}.
$$
This shows, for example, 
that wedging by $\Psi_1$ annihilates the $s[2,2]$-subspace of
$S_{1,7}(\Gamma[\sqrt{-3}],{\rm det})$.
\end{subsection}
\end{section}
\begin{section}{Low Weight Eisenstein Series}\label{eisenstein}
In this section we construct Eisenstein series of low weight for our
Picard modular group $\Gamma[\sqrt{-3}]$. Note that by Proposition
\ref{existenceEisenstein} the weight of a non-trivial Eisenstein series
in $M_{j,k}(\Gamma[\sqrt{-3}],{\rm det}^{\ell})$ 
satisfies $j\equiv \ell \, (\bmod \, 3)$.
Eisenstein series exist if $j+k>4$, cf.\ e.g.\ \cite{Shimura2}. 

\begin{proposition} 
For $j+k>4$ and $j\equiv \ell \, (\bmod \, 3)$ the space of Eisenstein series in
$M_{j,k}(\Gamma[\sqrt{-3}]),\det{}^{\ell})$ has dimension $4$ 
and as a $S_4$-representation it is of the form 
$(s[4]\oplus s[3,1])\otimes s[1,1,1,1]^{\otimes k}$.
\end{proposition}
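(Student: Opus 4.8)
The plan is to compute the space of Eisenstein series cusp by cusp and then reassemble it as an $S_4$-module. Recall from Section~\ref{ourcase} that $\Gamma_1[\sqrt{-3}]$ has four cusps, permuted by $S_4$, and that $\Gamma[\sqrt{-3}]\backslash B^*$ has a single cusp (the class number is one), but the degree-three cover $\Gamma_1[\sqrt{-3}]\backslash B^*$ genuinely has four. Since we are working with $\Gamma[\sqrt{-3}]$ with character $\det{}^{\ell}$, the relevant ``cusps'' for counting boundary data are still the four $S_4$-orbit points $c_1,\dots,c_4$, because the Eisenstein subspace is by definition the orthogonal complement of the cusp forms and is detected by the constant terms of the Fourier--Jacobi expansions at these four cusps. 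First I would recall that at each cusp the constant term $f_0$ of the Fourier--Jacobi expansion of a form of weight $(j,k)$ is, by equation~(4) of Section~\ref{FJ-vector}, a vector all of whose entries vanish except the first, and this first entry is a \emph{constant} (it is a section of $L^{\otimes 0}\otimes\mathrm{Sym}^j A$ invariant under the unipotent radical, hence scalar). So the constant term at each cusp contributes at most a one-dimensional space of data, and the total constant-term map lands in a $4$-dimensional space $\bigoplus_{i=1}^4 \CC\cdot(\text{cusp }c_i)$.

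Next I would show this constant-term map is surjective for $j+k>4$ and injective on the Eisenstein subspace. Injectivity is essentially the definition: a modular form all of whose Fourier--Jacobi constant terms vanish is a cusp form, and the Eisenstein subspace meets the cusp forms trivially. Surjectivity is the substantive analytic input, and it is exactly the classical statement that Eisenstein series attached to the (finitely many) cusps converge and are linearly independent in the range $j+k>4$; this is the content of the citation to \cite{Shimura2} already invoked before the proposition. One subtlety to address: the compatibility constraint $j\equiv\ell\ (\bmod\ 3)$ from Proposition~\ref{existenceEisenstein} is precisely what guarantees the constant term is allowed to be nonzero at all --- if $j\not\equiv\ell$, every Eisenstein series would be forced to be cuspidal, hence zero, and the dimension would drop. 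So the hypothesis $j\equiv\ell\ (\bmod\ 3)$ is used here to ensure all four cusps genuinely contribute. This gives $\dim = 4$.

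For the $S_4$-module structure, I would argue as follows. The constant-term map is $S_4$-equivariant, so the Eisenstein space is isomorphic as an $S_4$-module to the space spanned by the four ``cusp functionals,'' twisted by the character through which $S_4$ acts on the one-dimensional constant-term datum at a single cusp. The permutation action of $S_4$ on the four cusps $\{c_1,c_2,c_3,c_4\}$ gives the module $s[4]\oplus s[3,1]$. It then remains to identify the one-dimensional twist: an element $\sigma\in S_4$ stabilizing a cusp acts on that cusp's constant term, and the slash action in weight $(j,k)$ with character $\det^{\ell}$ contributes a sign. The key computation --- and the place where I expect the real bookkeeping to live --- is to check that the stabilizer of a cusp (isomorphic to $S_3$, cf.\ Section~\ref{FJ}) acts on the constant-term line by $\mathrm{sgn}^{k}$ (equivalently by the restriction of $s[1,1,1,1]^{\otimes k}$), using the explicit generators $(X,Y,Z)\mapsto(-X,Z,Y)$ and $(X,Y,Z)\mapsto(X,\rho Y,\rho^2 Z)$ recorded in Section~\ref{FJ} together with the $\mu_6$-eigenvalue normalization there. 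Granting that, the Eisenstein module is the permutation module on the cusps tensored with $s[1,1,1,1]^{\otimes k}$, i.e.\ $(s[4]\oplus s[3,1])\otimes s[1,1,1,1]^{\otimes k}$, which is the claim.

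The main obstacle is the last step: pinning down the one-dimensional character by which the cusp stabilizer acts, since this requires being careful about how the automorphy factor $j_2$ in weight $(j,k)$ restricts to the torus and Weyl-type elements of the parabolic $P$, and matching the sign conventions with Finis' normalization of $X,Y,Z$. The dimension count itself is routine given \cite{Shimura2} and Proposition~\ref{existenceEisenstein}.
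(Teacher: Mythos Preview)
Your proposal is correct and follows essentially the same approach as the paper: identify the Eisenstein space with the $4$-dimensional space of constant terms at the cusps, note that $S_4$ permutes the cusps, and determine the twist by computing how a cusp stabilizer acts on the constant-term line. The paper's execution is slightly more streamlined than what you outline: rather than checking the full $S_3$ stabilizer via the $(X,Y,Z)$ generators, it observes that the only two possibilities for a $4$-dimensional $S_4$-module arising from cusp data are the permutation module $s[4]\oplus s[3,1]$ and its sign twist $s[2,1,1]\oplus s[1,1,1,1]$, and then distinguishes them by computing the action of the single transposition $(34)=R_2=\mathrm{diag}(-1,-1,1)$ directly from the automorphy factor $j_2$, which acts on the first entry of the constant-term vector by $(-1)^k$.
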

\begin{proof}
Since the group $S_4$ permutes the cusps it follows that the representation
is either $s[4]\oplus s[3,1]$ or $s[2,1,1] \oplus s[1,1,1,1]$. 
Let now $E\in M_{j,k}(\Gamma[\sqrt{-3}]),\det{}^{\ell})$ be an invariant 
or anti-invariant element in the space of
Eisenstein series under the action of $S_4$.
Then the matrix ${\rm diag}(-1,-1,1)=R_2$ 
corresponds to the transposition $(34)$ and acts
on an Eisenstein series $E$ by 
$$
{\rm diag} ((-1)^k,(-1)^{k+1},\ldots, (-1)^{k+j}).
$$ 
From the transformation rule (4) it follows that the constant term is a vector
$(c^{(1)},\ldots, c^{(j+1})^t$ with zero entries $c^{(m)}$ on places $m>1$,
cf.\ the proof of Proposition \ref{existenceEisenstein}.
Therefore the action on $E$ is by $(-1)^k$ and this proves the proposition. 
\end{proof}
In general the eigenvalue for $T_{\nu}$ with ${\rm N}(\nu)=p\equiv 1(\bmod \, 3)$
of an Eisenstein series of weight $(j,k)$ is
$$
(p^{k-2}+1) \nu^{j+1}+\bar{\nu}^{j+k-1}. \eqno(9a)
$$
and for $T_{-p}$ with $p$ a prime $\equiv 2 \, (\bmod \, 3)$
$$
(-1)^{j+1}\left( p^{2 k+j-3}+p^{j+1}+(-1)^k(p-1)p^{k+j-3}\right) \, . \eqno(9b)
$$

Now we look at the remaining cases with $j+k\leq 4$.

\begin{proposition} The non-zero Eisenstein spaces for $j+k\leq 4$ are given in the following table as representations of $S_4$.

\bigskip
\vbox{
\centerline{\def\quad{\hskip 0.3em\relax}
\vbox{\offinterlineskip
\hrule
\halign{&\vrule#& \quad \hfil#\hfil \strut \quad  \cr
height2pt&\omit&&\omit&&\omit&&\omit&&\omit& \cr
& $(j,k,\ell)$ && $(0,0,0)$ && $(0,3,0)$ && $(1,1,1)$ && $(2,2,2)$ & \cr
height2pt&\omit&&\omit&&\omit&&\omit&&\omit& \cr
\noalign{\hrule} 
height2pt&\omit&&\omit&&\omit&&\omit&&\omit& \cr
& ${\rm rep}$ && $s[4]$ && $s[2,1,1]$ && $s[1,1,1,1]$ && $s[4]$ &\cr 
height2pt&\omit&&\omit&&\omit&&\omit&&\omit& \cr
} \hrule}
}}

\end{proposition}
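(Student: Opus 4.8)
The plan is to treat each of the four cases $(j,k,\ell)\in\{(0,0,0),(0,3,0),(1,1,1),(2,2,2)\}$ separately, the common principle being that since $j+k\le 4$ the Hirzebruch–Riemann–Roch / Lefschetz dimension formulas from Section~\ref{modules} are either not applicable or give very small numbers, so the Eisenstein part either fills the whole space or is cut out by an explicit constant-term analysis at the four cusps. For $(0,0,0)$ the space $M_{0,0}(\Gamma[\sqrt{-3}])$ is just the constants $\CC$, on which $S_4$ acts trivially, giving $s[4]$; this case needs only the remark that a holomorphic modular form of weight $0$ on a normal projective variety is constant. For $(0,3,0)$ the space is $M_3(\Gamma[\sqrt{-3}])=\CC\varphi_0\oplus\CC\varphi_1\oplus\CC\varphi_2$, already identified in Section~\ref{ringofscalarmodforms} as carrying the representation $s[2,1,1]$ (via the coordinates $x_1,\dots,x_4$ with $\sum x_i=0$ and $\sigma\cdot x_i=\mathrm{sgn}(\sigma)x_{\sigma(i)}$); here one must additionally check there are no cusp forms of weight $3$, which follows from $\dim M_3=3$ together with the recursion/dimension count (cusp forms would need weight $\ge 6$, cf.\ the appearance of $\zeta$ in weight $6$), so $M_3$ is entirely Eisenstein.

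The two genuinely vector-valued cases $(1,1,1)$ and $(2,2,2)$ are where the constant-term argument does the work, exactly as in the proof of the preceding proposition. First I would invoke Proposition~\ref{existenceEisenstein}: since $\ell\equiv j\,(\mathrm{mod}\,3)$ these spaces can contain non-cuspidal forms, and an Eisenstein series is determined by its constant terms at the four cusps, each of which (by equation~(4) and the argument in Proposition~\ref{existenceEisenstein}) is a vector $(c^{(1)},0,\dots,0)^t$ — i.e.\ a single scalar per cusp, so the Eisenstein space injects into a $4$-dimensional space on which $S_4$ acts by permuting cusps, twisted by the character through which $S_4$ acts on the first coordinate of $\mathrm{Sym}^j$ at a fixed cusp. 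As in the proof of the previous proposition, applying $R_2=\mathrm{diag}(-1,-1,1)\sim(34)$ and $R_3=\mathrm{diag}(1,1,\rho)\sim(234)$ pins down this twist: on the first coordinate $\mathrm{Sym}^j(j_2)$ contributes the appropriate power. For $(1,1,1)$: $k=1$ is odd and the $\mathrm{Sym}^1$-twist on the top coordinate is by $-1$, so the twisting character is $\mathrm{sgn}$; combined with the requirement $j+k=2\le 4$ forcing the Eisenstein space to be at most $4$-dimensional but the cohomological input of \cite{B-vdG} forcing it to be exactly $1$-dimensional (only the cusp $\infty$-type survives), one gets the single alternating copy $s[1,1,1,1]$. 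For $(2,2,2)$: $k=2$ is even, $\mathrm{Sym}^2(-1_2)=1$ so the twist is trivial, $S_4$ acts by honest cusp-permutation on a space which again the dimension/cohomology data pin to $1$-dimensional, namely the trivial summand $s[4]$.

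The main obstacle is the \emph{dimension count} that distinguishes "the Eisenstein space is all $4$ cusp-coordinates" (as in the $j+k>4$ proposition) from "only a $1$- or $3$-dimensional subspace is realized" in these small-weight cases. The pure representation-theoretic argument via $R_2,R_3$ only shows the Eisenstein space is a \emph{subrepresentation} of the appropriate twist of the permutation representation $s[4]\oplus s[3,1]$ or $s[2,1,1]\oplus s[1,1,1,1]$; to see that for $(0,3,0)$ it is the $3$-dimensional $s[2,1,1]$ and for $(1,1,1),(2,2,2)$ it collapses to the $1$-dimensional piece, I would either exhibit the forms explicitly (for $(0,3,0)$, the $\varphi_i$ themselves; for $(2,2,2)$, note the wedge computations of Section~\ref{sec:det2-case} and that $\Psi_1,\Psi_2\in S_{1,7}$ and $S_{1,10}$ are \emph{cusp} forms, so the relevant Eisenstein generator is a new weight-$(2,2)$ object detected by its nonzero constant term at $\infty$ only) or, more cleanly, quote the numerical Euler-characteristic computations of \cite{B-vdG} which give the exact dimensions of these spaces and their $S_4$-equivariant structure. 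With the dimension known, the representation is forced among the subrepresentations compatible with the cusp-permutation action computed above, and the table follows. I expect the bulk of the actual work to be the (routine) verification that in each of the three nontrivial cases there is no cuspidal contribution of that weight — which for $j\le 2$ and these small $k$ follows from the dimension formulas of Section~\ref{modules} and the fact that the smallest-weight cusp forms appearing in our module generators are the weight-$(1,7)$ forms $\Phi_i$ and weight-$6$ form $\zeta$.
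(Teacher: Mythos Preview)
Your plan is viable but takes a different route from the paper. You set things up abstractly: bound the Eisenstein space from above as a subrepresentation of the cusp-permutation representation (twisted by $\mathrm{sgn}^k$), and then appeal to the dimension data of \cite{B-vdG} to pin down which irreducible piece survives. This works, provided those external dimensions are available, but it leaves the actual Eisenstein series as ghosts whose existence is inferred rather than exhibited.

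The paper instead \emph{constructs} the forms directly. For $(1,1,1)$ it defines $E_{1,1}=\Psi_1/\zeta$, where $\Psi_1$ is the $S_4$-invariant generator of $S_{1,7}(\Gamma[\sqrt{-3}],\det^2)$ and $\zeta\in S_6(\Gamma[\sqrt{-3}],\det)$ is the $S_4$-anti-invariant form; holomorphicity reduces (via the non-vanishing of $\Psi_1$ off the $C_{ij}$, equation~(8)) to checking divisibility along a single curve $C_{34}$. The $S_4$-type $s[1,1,1,1]$ is then immediate from $\mathrm{invariant}/\mathrm{anti\text{-}invariant}$. For $(2,2,2)$ the paper simply takes $K_2=\mathrm{Sym}^2(E_{1,1})$, which is automatically $S_4$-invariant, giving $s[4]$. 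This construction simultaneously proves existence, gives the $S_4$-type, and produces explicit objects that are reused later (in the structure theorems for ${\mathcal M}_1^1$ and ${\mathcal M}_2^2$). Your approach would still need something like this to establish the lower bound $\dim\ge 1$ without leaning on \cite{B-vdG}; the quotient-by-$\zeta$ trick is the key idea you are missing.
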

\begin{proof}
The cases with $(j,k)=(0,0)$ and $(0,3)$ are well-known, 
cf.\ Section~\ref{ringofscalarmodforms}.
The space $M_{1,1}(\Gamma[\sqrt{-3}],{\rm det})$ is generated by the form
$$
E_{1,1}= \Psi_1/\zeta
$$
with $\Psi_1$ the $S_4$-invariant form that 
generates $S_{1,7}(\Gamma[\sqrt{-3}],\det{}^2)$. 
In fact, after multiplication by $\zeta$ our Eisenstein series yields a cusp form 
of weight $(1,7)$ and character ${\rm det}^2$.
Since in view of (8) the form 
$\Psi_1$ does not vanish outside the curves $C_{ij}$ 
it suffices to check the divisiblity along the curve $C=C_{34}$. 
This form does not vanish at every cusp.  
Similarly, the space $M_{1,4}(\Gamma[\sqrt{-3}],\det{})$ is generated
by $\varphi_i E_{1,1}$ and by the invariant form $\Psi_2/\zeta$. Finally,
the space $M_{2,2}(\Gamma[\sqrt{-3}],\det{}^2)$ is generated by a form $K_2=
{\rm Sym}^2(E_{1,1})$.
\end{proof} 

\begin{remark} The eigenvalues of $E_{1,1}$ for the Hecke operators are given by
formula (9a) and (9b). Note that these eigenvalues are not integral.
\end{remark}
\end{section}
\begin{section}{The Structure of the Module ${\mathcal M}_1$ of Vector-valued Modular Forms}\label{structure}

We shall determine the structure of the $M$-module ${\mathcal M}_1$
of Picard modular forms.
We shall construct generators for the ${\mathcal M}_1^{\ell}$ and 
$\Sigma_1^{\ell}$. In order to see that these
generators exhaust $\Sigma_1$ we need the following dimension formula:
$$
\dim S_{1,3k+1}(\Gamma_1[\sqrt{-3}])= 3k^2-3 \qquad \hbox {\rm for $k\geq 1$}
\eqno(10)
$$
given in Section  \ref{modules}.
In fact, we have the more precise formula for $k\geq 1$ 
$$
\dim S_{1,3k+1}(\Gamma[\sqrt{-3}],{\rm det}^{\ell})=
\begin{cases}
k^2-1 & \ell =0 \\
(k+1)^2-4 & \ell =1 \\
(k-1)^2 & \ell =2 \\
\end{cases}  \eqno(11)
$$
but assuming (10) it will follow from our proof.
In fact, we will show the existence of a submodule of $\Sigma_1$
whose graded part of degree $k$ has dimension $3k^2-3$.
Then (10) shows that this exhausts all of $\Sigma_1$, hence we have
exhausted $\Sigma_1^i$ for $i=0,1,2$ as well. Thus (10)
will imply (11).

\begin{subsection}{The module ${\mathcal M}_{1}=\Sigma_1^0$}
We give a presentation of this module.

\begin{theorem}
There exist cusp forms $\Phi_i$ for $i=0,1,2$ of weight $(1,7)$ 
spanning an irreducible $S_4$-representation of type $s[2,1,1]$
that generate $\Sigma_1^0$
as $M$-module and the module of relations is generated over $M$
by the $S_4$-invariant relation
$$
\sum_{i=0}^2 \varphi_i \Phi_i =0 \, .
$$
\end{theorem}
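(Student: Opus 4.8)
The plan is to prove the theorem in three stages: first, that the $\Phi_i$ satisfy the relation $\sum \varphi_i\Phi_i=0$; second, that they generate a submodule $N\subseteq\Sigma_1^0$ that is exactly the cokernel-free image of $M^3$ modulo that one relation; and third, a dimension count showing $N=\Sigma_1^0$ in each degree, using formula (10) (equivalently (11) for $\ell=0$). The relation itself should fall out of the Rankin--Cohen construction: with $\Phi_0=-[\varphi_1,\varphi_2]/(6\pi\sqrt{-1})$ and cyclic permutations, the combination $\varphi_0[\varphi_1,\varphi_2]+\varphi_1[\varphi_2,\varphi_0]+\varphi_2[\varphi_0,\varphi_1]$ expands, by Definition~\ref{crochet}, into a sum of terms $\varphi_a\varphi_b\nabla\varphi_c$ with coefficients $\pm\tfrac13$ that telescope to zero. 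So $\sum_{i=0}^2\varphi_i\Phi_i=0$ is a formal identity, and by Corollary~\ref{cons} each $\Phi_i$ lies in $S_{1,7}(\Gamma[\sqrt{-3}])$; their linear independence and the fact that they span $s[2,1,1]=\wedge^2 s[2,1,1]$ is recorded in Section~\ref{examples} via the Fourier--Jacobi expansions.

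Next I would set up the candidate presentation. Let $N$ be the $M$-submodule of $\Sigma_1^0$ generated by $\Phi_0,\Phi_1,\Phi_2$, and consider the surjection $\pi\colon M^3\to N$, $(f_0,f_1,f_2)\mapsto\sum f_i\Phi_i$. The element $(\varphi_0,\varphi_1,\varphi_2)$ lies in $\ker\pi$; I claim it generates $\ker\pi$ as an $M$-module. The cleanest way to see this is via the geometry of the vanishing loci established in Corollary~\ref{vanishingPhi}: the forms $X_i$ (hence the $\Phi_i$) vanish precisely on the three modular curves through the cusp $c_i$, and the six curves $C_{ij}$ correspond on $\Gamma[\sqrt{-3}]\backslash B^*\cong\mathbb{P}^2$ to the six lines $x_i=x_j$. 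A relation $\sum f_i\Phi_i=0$ with $f_i\in M$, i.e.\ $-\sum_{i\geq 2} f_i X_i + f_1\cdot(X_1) $ after rewriting in the $X_i$-basis, must have its coefficients controlled by these divisors: evaluating the relation on each curve $C_{jk}$, where only one $X_i$ is nonzero, forces $f_i$ (the coefficient of the nonvanishing $X_i$) to vanish on $C_{jk}$, hence to be divisible by the corresponding linear form. Carrying this through for all six curves and using that $M=\mathbb{C}[\varphi_0,\varphi_1,\varphi_2]$ is a UFD (a polynomial ring), one deduces that the relation is an $M$-multiple of $(\varphi_0,\varphi_1,\varphi_2)$, so $\ker\pi=M\cdot(\varphi_0,\varphi_1,\varphi_2)$ and $N\cong M^3/M\cdot(\varphi_0,\varphi_1,\varphi_2)$.

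Finally, I would compute the Hilbert function of this presented module and match it against (10). The Koszul-type resolution $0\to M(-1)\xrightarrow{(\varphi_0,\varphi_1,\varphi_2)} M(0)^3\to N\to 0$ (with the grading shift recording that the $\varphi_i$ have weight $3$ and the $\Phi_i$ weight $7$, so in the index $k$ with weight $3k+1$ a generator sits in degree $k=2$ and the relation in degree $k=3$) gives $\dim N_{3k+1}=3\dim M_{3(k-2)}-\dim M_{3(k-3)}$; since $\dim M_{3r}=\binom{r+2}{2}$ for $r\geq 0$ and $0$ otherwise, this evaluates to $3k^2-3$ for $k\geq 1$. Comparing with formula (10), $\dim S_{1,3k+1}(\Gamma_1[\sqrt{-3}])=3k^2-3$, we get $N=\Sigma_1$ in every degree; decomposing under $\Gamma/\Gamma_1[\sqrt{-3}]$ then gives $N\cap\Sigma_1^0=\Sigma_1^0$, and since the $\Phi_i$ lie in the $\ell=0$ component we conclude $N=\Sigma_1^0$, finishing the proof (and, as promised, establishing (11) for $\ell=0$).

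The main obstacle is the second stage — proving that $(\varphi_0,\varphi_1,\varphi_2)$ generates \emph{all} the relations, not just some. The divisorial argument via Corollary~\ref{vanishingPhi} needs care: one must track multiplicities of vanishing along each $C_{ij}$ (the expansions in Section~\ref{examples} show $\Phi_0$ vanishes to order $2$ in the first component and order $3$ in the second along certain curves), make sure the geometric vanishing statement on $\mathbb{P}^2$ really forces polynomial divisibility in $M$, and handle the bookkeeping of which $f_i$ is constrained by which line. An alternative route that sidesteps some of this: prove $\dim N_{3k+1}\geq 3k^2-3$ directly from the resolution only after checking that $\pi$ restricted to $M\cdot(\varphi_0,\varphi_1,\varphi_2)$ is zero and that any \emph{further} relation would drop $\dim N$ below $\dim\Sigma_1^0$, contradicting $N\subseteq\Sigma_1^0$ once the reverse inequality $\dim N_{3k+1}\le 3k^2-3$ is known — but the reverse inequality is itself the content of "no extra relations," so one cannot fully escape the divisor computation. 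I would present the divisorial argument as the core of the proof.
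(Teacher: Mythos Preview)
Your overall strategy and stage~1 are fine, but stages~2 and~3 both have gaps. For stage~2, the paper takes a different and much shorter route: it uses the wedge identity $\Phi_1\wedge\Phi_2 = c\,\zeta^2\varphi_0 \neq 0$ (equation~(7)). Given any relation $M$-independent from $(\varphi_0,\varphi_1,\varphi_2)$, one combines the two over the fraction field to eliminate $\Phi_0$, clears denominators, and obtains a nontrivial two-term relation $h_1\Phi_1+h_2\Phi_2=0$; wedging with $\Phi_2$ then forces $h_1(\Phi_1\wedge\Phi_2)=0$, a contradiction. Your divisorial approach via Corollary~\ref{vanishingPhi} does extract the conditions $\varphi_i\mid f_i$ and $(\varphi_i-\varphi_j)\mid(f_i-f_j)$, but these six divisibilities do \emph{not} characterize $M\cdot(\varphi_0,\varphi_1,\varphi_2)$: for instance the triple $(\varphi_0^2,\varphi_1^2,\varphi_2^2)$ satisfies all of them without lying in that submodule. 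So the implication you need fails, and the argument does not close without eventually invoking the wedge anyway.

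For stage~3 there is both an arithmetic and a conceptual error. The resolution you write down gives
\[
\dim N_{3k+1}=3\binom{k}{2}-\binom{k-1}{2}=(k-1)(k+1)=k^2-1,
\]
not $3k^2-3$; and since $N\subseteq\Sigma_1^0\subsetneq\Sigma_1$, comparing with formula~(10) (which concerns $\Gamma_1[\sqrt{-3}]$ and gives $3k^2-3$) cannot possibly yield $N=\Sigma_1$. The paper's logic is global: it constructs submodules in each $\Sigma_1^\ell$ for $\ell=0,1,2$, sums their Hilbert functions to $3k^2-3$, and only then invokes~(10) to conclude that all three inclusions are equalities simultaneously --- this is exactly how~(11) is derived from~(10). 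If you want to handle $\ell=0$ in isolation, you must either take~(11) for $\ell=0$ as input or carry out the companion constructions for $\ell=1,2$ as well.
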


In particular, we see that as a $S_4$-representation we have for $k\geq 1$ 
$$
S_{1,3k+7}(\Gamma[\sqrt{-3}])=s[2,1,1] \otimes {\rm Sym}^k(s[2,1,1])-
{\rm Sym}^{k-1}(s[2,1,1]).
$$
Here is a table\footnote{The meaning of the colors is: red indicates a form whose eigenvalues are given in the tables at the end, blue are lifts, green are Eisenstein series.} with the multiplicities of the irreducible representations
of $S_4$ in $S_{1,1+3k}(\Gamma[\sqrt{-3}])$.

\bigskip
\vbox{
\centerline{\def\quad{\hskip 0.3em\relax}
\vbox{\offinterlineskip
\hrule
\halign{&\vrule#& \quad \hfil#\hfil \strut \quad  \cr
height2pt&\omit&&\omit&&\omit&&\omit&&\omit&&\omit& \cr
& $k$ && $s[4]$ && $s[3,1]$ && $s[2,2]$ && $s[2,1,1]$ && s[1,1,1,1]& \cr
height2pt&\omit&&\omit&&\omit&&\omit&&\omit&&\omit& \cr
\noalign{\hrule}
height2pt&\omit&&\omit&&\omit&&\omit&&\omit&&\omit& \cr
& $0$ && $0$ && $0$ && $0$ && $0$ && $0$ & \cr
& $1$ && $0$ && $0$ && $0$ && $0$ && $0$ & \cr
& $2$ && $0$ && $0$ && $0$ && {\color{red}$1$} && $0$ & \cr
& $3$ && $0$ && {\color{red}$1$} && {\color{red}$1$} && {\color{red}$1$} && $0$ & \cr
& $4$ && $0$ && $2$ && $1$ && $2$ && $1$ & \cr
& $5$ && $1$ && $3$ && $2$ && $3$ && $1$ & \cr
height2pt&\omit&&\omit&&\omit&&\omit&&\omit&&\omit& \cr
} \hrule}
}}

\begin{proof}
The forms $\Phi_i$ defined in Section \ref{examples}
belong to $S_{1,7}(\Gamma[\sqrt{-3}])$ and are linearly 
independent.
As $M_3(\Gamma[\sqrt{-3}])$ is the $S_4$-representation $s[2,1,1]$
the $\Phi_i$  generate the irreducible representation $s[2,1,1]=
\wedge^2 s[2,1,1]$. 
We consider the kernel of the map of $M$-modules 
$M \otimes \langle \Phi_0, \Phi_1,\Phi_2\rangle \to \Sigma_1^0$
given by $f \otimes \Phi_i \mapsto f \, \Phi_i$. Take an irreducible
$S_4$-representation in the kernel. Suppose that it has dimension $\geq 2$.
Then we have two independent relations
$\sum_{i=0}^2 f_i \Phi_i=0$ and $\sum_{i=0}^2 g_i \Phi_i=0$. By suitably
subtracting and multiplying with elements of $M$ and using the action of $S_4$
we obtain a non-trivial relation $h_1\Phi_1+h_2 \Phi_2=0$ which implies that 
$\Phi_1\wedge\Phi_2$ is zero, contradicting relation (6).
Hence any non-trivial relation corresponds to a $1$-dimensional subspace. 
But this implies that the relation is essentialy unique: if we had two such 
relations that are independent over $M$ we would obtain by the same argument 
$\Phi_1 \wedge \Phi_2=0$. 
One can then check that  (for $k\geq 1$) the dimension of
$\dim S_{1,3k+1}(\Gamma[\sqrt{-3}])$ indeed equals 
$3 \dim M_{0,3k-6}(\Gamma[\sqrt{-3]})-\dim M_{0,3k-9}(\Gamma[\sqrt{-3}])= k^2-1$ and the result follows.
\end{proof}

\end{subsection}
\begin{subsection}{The modules ${\mathcal M}_1^1$ and  $\Sigma_1^1$}

\begin{theorem}
The $M$-module ${\mathcal M}_1^1$ is generated by an $S_4$-anti-invariant 
Eisenstein series of weight $(1,1)$ and an $S_4$-invariant Eisenstein series
of weight $(1,4)$.
\end{theorem}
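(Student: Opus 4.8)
The plan is to identify the two Eisenstein series explicitly, then show they generate all of $\mathcal{M}_1^1$ by a dimension count. Let $E_{1,1}$ denote the $S_4$-anti-invariant Eisenstein series of weight $(1,1)$ constructed in Section \ref{eisenstein} (the generator of $M_{1,1}(\Gamma[\sqrt{-3}],\det)$, equal to $\Psi_1/\zeta$), and let $E_{1,4}$ denote the $S_4$-invariant Eisenstein series of weight $(1,4)$ (equal to $\Psi_2/\zeta$). First I would recall from Section \ref{eisenstein} that these span the Eisenstein subspaces in weights $(1,1)$ and $(1,4)$ respectively, and that by Proposition \ref{existenceEisenstein} every form in $\mathcal{M}_1^1$ in weight $(1,3k+1)$ is either Eisenstein or cuspidal, with the Eisenstein part being $4$-dimensional of type $(s[4]\oplus s[3,1])\otimes s[1,1,1,1]^{\otimes (3k+1)} = s[1,1,1,1]\oplus s[2,1,1]$ once $3k+1>4$, i.e.\ $k\geq 2$.

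The core of the argument is to show that multiplication by the scalar forms $\varphi_0,\varphi_1,\varphi_2$ carries $M\cdot E_{1,1} + M\cdot E_{1,4}$ onto all of $\mathcal{M}_1^1$. The strategy I would use: decompose $\mathcal{M}_1^1 = \Sigma_1^1 \oplus \mathcal{E}_1^1$ into cuspidal and Eisenstein parts (this is a direct sum of $M$-modules since the Eisenstein condition is preserved under multiplication by $M$, and cuspidality likewise). For the Eisenstein part, the four-dimensional Eisenstein space in weight $(1,3k+1)$ is spanned by the products of $E_{1,1}$ and $E_{1,4}$ with scalar forms of the appropriate weight: in weight $(1,3k+1)$ with $k\geq 1$ the candidates are $M_{0,3k}\cdot E_{1,1}$ and $M_{0,3k-3}\cdot E_{1,4}$, and one checks these fill out the $4$-dimensional Eisenstein space by comparing $S_4$-types — $M_{0,3k}(\Gamma[\sqrt{-3}])$ as an $S_4$-module is $\mathrm{Sym}^k(s[2,1,1])\otimes s[1,1,1,1]^{\otimes 3k}$ (up to the sign twist) which contains the needed $s[4]$ and $s[3,1]$ summands for $k\geq 2$, while $E_{1,4}$ supplies what is missing in low weight. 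For the cuspidal part $\Sigma_1^1$, I would use Theorem on $\Sigma_1^0$ (generated by the $\Phi_i$) together with Lemma \ref{Phi-G}, which expresses $(\zeta/\varphi_1\varphi_2)\Phi_0$ in terms of the $\gamma_{ij}\in S_{1,7}(\Gamma[\sqrt{-3}],\det)$; since $\gamma_{ij} = [\zeta,x_i-x_j]/(x_i-x_j)$ and $E_{1,1}=\Psi_1/\zeta$, one gets relations of the form $\gamma_{ij} = (\text{scalar form})\cdot E_{1,1}$ using identity (7) and the wedge-product formulas in Section \ref{sec:det2-case}.

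The decisive dimension input is formula (11): $\dim S_{1,3k+1}(\Gamma[\sqrt{-3}],\det) = (k+1)^2 - 4$ for $k\geq 1$. Granting this (which the excerpt says follows from the proof of the $\Sigma_1$ structure, given (10)), I would verify that $\dim\big(M_{0,3(k-2)}\cdot (\zeta/\varphi_j)E_{1,1}\text{-type forms}\big)$ plus the Eisenstein contribution accounts for $\dim M_{1,3k+1}(\Gamma[\sqrt{-3}],\det)$. Concretely: the submodule generated by $E_{1,1}$ and $E_{1,4}$ has graded piece in weight $(1,3k+1)$ of dimension at most $\dim M_{0,3k} + \dim M_{0,3k-3}$, minus the dimension of the relations; I would pin down the relation module (expected to be a single $S_4$-equivariant relation coming from $\varphi_0\varphi_1\varphi_2$ times $E_{1,1}$ versus a multiple of $E_{1,4}$, or similar, traceable through identity (5) and (8)) and check the resulting count matches $\binom{k+3}{3}_{\text{scalar}}$-type formula for $\dim M_{1,3k+1}$.

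The hard part will be controlling the module of relations between the two generators and verifying that the submodule they generate is saturated, i.e.\ genuinely equals $\mathcal{M}_1^1$ rather than a proper submodule of the same dimension in each graded piece only asymptotically. I expect the cleanest route is: (a) establish holomorphicity once and for all via the Taylor expansions along $C_{34}$ as in Section \ref{sec:restriction} (the same mechanism already used for $\Psi_1,\Psi_2$), so that $M_{0,*}\cdot E_{1,1}$ and $M_{0,*}\cdot E_{1,4}$ genuinely land in $\mathcal{M}_1^1$; (b) exhibit enough explicit members of $\mathcal{M}_1^1$ in low weights $k=0,1,2,3$ by hand (using $E_{1,1}$, $E_{1,4}$, the $\gamma_{ij}$, and Lemma \ref{Phi-G}) to see surjectivity there; and (c) run the induction: any $F\in M_{1,3k+1}(\Gamma[\sqrt{-3}],\det)$ with $k$ large, after subtracting its Eisenstein part (handled above), is cuspidal, and via Lemma \ref{Phi-G} and the structure of $\Sigma_1^0$ can be written as $\sum_i g_i\gamma_{1i}$ with $g_i\in M$ of lower weight, which by induction lies in $M\cdot E_{1,1}$. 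The genuine obstacle is making step (c) precise — ensuring the expression via $\gamma_{ij}$ exists with the right $S_4$-equivariance and no denominators — and this is exactly where (11) and the wedge-product identities of Section \ref{sec:det2-case} do the work, so I would reduce as much as possible to those already-proven facts rather than re-deriving Fourier–Jacobi expansions.
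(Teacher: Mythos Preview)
Your proposal contains a genuine misconception that makes the argument far more complicated than it needs to be, and introduces an error along the way.

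The key point you miss is that the $M$-module generated by $E_{1,1}$ and $E_{1,4}$ is \emph{free}: there are no relations at all. This follows immediately from the wedge identity (8), namely $\Psi_1\wedge\Psi_2 = c'\,\zeta^3$ with $c'\neq 0$. If $\alpha E_{1,1}+\beta E_{1,4}=0$ for scalar forms $\alpha,\beta\in M$, then multiplying by $\zeta$ gives $\alpha\Psi_1+\beta\Psi_2=0$, whence $\Psi_1\wedge\Psi_2=0$, a contradiction. So the graded piece of the submodule in weight $(1,3k+1)$ has dimension exactly $\dim M_{0,3k}+\dim M_{0,3k-3}=(k+1)^2$, which matches the dimension formula for $M_{1,3k+1}(\Gamma[\sqrt{-3}],\det)$. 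That is the whole proof in the paper. Your expectation of ``a single $S_4$-equivariant relation coming from $\varphi_0\varphi_1\varphi_2$ times $E_{1,1}$ versus a multiple of $E_{1,4}$'' is simply wrong, and chasing it would lead you in circles.

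Two further issues with the route you sketch. First, the decomposition $\mathcal{M}_1^1=\Sigma_1^1\oplus\mathcal{E}_1^1$ is \emph{not} a direct sum of $M$-modules: cuspidality is preserved under multiplication by $M$, but the Eisenstein property is not (multiplying an Eisenstein series by a scalar cusp form gives a cusp form). So your strategy of handling the two summands separately as $M$-modules cannot be carried out as stated. Second, your plan to express an arbitrary cusp form in $\Sigma_1^1$ through the $\gamma_{ij}$ and Lemma~\ref{Phi-G} reverses the logical order of the paper: the structure of $\Sigma_1^1$ (the theorem immediately following) is \emph{deduced from} the present theorem, not the other way round.
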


This results in the following table for the irreducible
representations contained in $M_{1,1+3k}(\Gamma[\sqrt{-3}],{\rm det})$.

\bigskip
\vbox{
\centerline{\def\quad{\hskip 0.3em\relax}
\vbox{\offinterlineskip
\hrule
\halign{&\vrule#& \quad \hfil#\hfil \strut \quad  \cr
height2pt&\omit&&\omit&&\omit&&\omit&&\omit&&\omit& \cr
& $k$ && $s[4]$ && $s[3,1]$ && $s[2,2]$ && $s[2,1,1]$ && s[1,1,1,1]& \cr
height2pt&\omit&&\omit&&\omit&&\omit&&\omit&&\omit& \cr
\noalign{\hrule}
height2pt&\omit&&\omit&&\omit&&\omit&&\omit&&\omit& \cr
& $0$ && $0$ && $0$ && $0$ && $0$ && {\color{green}$1$} & \cr
& $1$ && {\color{green}$1$} && {\color{green}$1$} && $0$ && $0$ && $0$ & \cr
& $2$ && $0$ && $0$ && {\color{blue}$1$} && ${\color{red}1}+{\color{green}1}$ && {\color{green}$1$} & \cr
& $3$ && ${\color{green}1}+{\color{blue}1}$ && ${\color{green}1}+2$ && {\color{blue}$1$} && {\color{red}$1$} && $0$ & \cr
& $4$ && $0$ && $2$ && $2$ && $4$ && $3$ & \cr
& $5$ && $3$ && $6$ && $3$ && $3$ && $0$ & \cr
height2pt&\omit&&\omit&&\omit&&\omit&&\omit&&\omit& \cr
} \hrule}
}}

\begin{proof}
Let $E_{1,1}=\Psi_1/\zeta$ and $E_{1,4}=\Psi_2/\zeta$  
be the two Eisenstein series. Since $\Psi_1\wedge \Psi_2$ does not vanish we 
cannot have relations of the form $\alpha E_{1,1}+\beta E_{1,4}=0$ with 
$\alpha$ and $\beta$ scalar-valued modular forms. Therefore these forms
generate a submodule with graded piece of degree $1+3k$ of dimension 
equal to $\dim M_{3k}^0+\dim M_{3k-3}^0= (k+1)^2$.
\end{proof}

From the structure of the module ${\mathcal M}_1^1$ one can deduce the
structure of the $M$-module $\Sigma_1^1$. Recall that we constructed forms
$a_i$ and $b_{ij,kl}$ in Section \ref{sec:det1-case}.
We summarize our results.
\begin{theorem}
There exists three modular forms $A_1, A_2, A_3$ spanning the $s[2,1,1]$-space 
of $S_{1,7}(\Gamma[\sqrt{-3}],\det{})$ and two modular forms $B_1, B_2$ spanning the $s[2,2]$-space of $S_{1,7}(\Gamma[\sqrt{-3}],\det{})$ that generate
$\Sigma_1^1$ over $M$ and the module of relations is generated over $M$ 
by three relations spanning a representation of type $s[2,1,1]$.
\end{theorem}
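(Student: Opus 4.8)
The plan is to deduce the structure of $\Sigma_1^1$ from the structure of ${\mathcal M}_1^1$ already established, since $\Sigma_1^1$ is a submodule of ${\mathcal M}_1^1$ of finite colength. First I would set up the exact sequence of $M$-modules
$$
0 \to \Sigma_1^1 \to {\mathcal M}_1^1 \to {\mathcal M}_1^1/\Sigma_1^1 \to 0,
$$
and identify the quotient: by Proposition~\ref{existenceEisenstein} a form in ${\mathcal M}_1^1$ fails to be a cusp form precisely when its constant term at some cusp is nonzero, and since $1\equiv 1\,(\bmod\,3)$ the Eisenstein part is nontrivial. From the previous theorem, ${\mathcal M}_1^1$ is freely generated (no relations, as $\Psi_1\wedge\Psi_2\neq 0$) by $E_{1,1}$ of weight $(1,1)$, transforming by the sign character $s[1,1,1,1]$, and $E_{1,4}$ of weight $(1,4)$, transforming by $s[4]$. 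The Eisenstein subspace in each weight $(1,3k+1)$ is $4$-dimensional and, by the proposition in Section~\ref{eisenstein}, equals $(s[4]\oplus s[3,1])\otimes s[1,1,1,1]^{\otimes(3k+1)} = (s[4]\oplus s[3,1])\otimes s[1,1,1,1]$ when $k$ is such that the weight is odd, i.e.\ $s[1,1,1,1]\oplus s[2,1,1]$. So the cuspidal condition cuts out a codimension-$4$ subspace in each graded piece, and the quotient ${\mathcal M}_1^1/\Sigma_1^1$ is a module supported at the cusps.

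Next I would produce explicit generators of $\Sigma_1^1$ inside ${\mathcal M}_1^1$. The forms $\gamma_{ij}=\frac{1}{x_i-x_j}[\zeta,x_i-x_j]$ constructed in Section~\ref{sec:det1-case} lie in $S_{1,7}(\Gamma[\sqrt{-3}],\det{})$ and span a $5$-dimensional space $s[2,2]\oplus s[2,1,1]$; call $A_1,A_2,A_3$ a basis of the $s[2,1,1]$-part (the $a_i$ with $\sum a_i=0$) and $B_1,B_2$ a basis of the $s[2,2]$-part (the $b_{ij,kl}$ with $\sum b_{ij,kl}=0$). I would check these are cusp forms (differentiation kills constant terms, as in Corollary~\ref{cons}) and that together with $\varphi_i\cdot E_{1,1}\in S_{1,4}(\Gamma[\sqrt{-3}],\det)$ — wait, $E_{1,1}$ has weight $(1,1)$ but is not cuspidal — so rather: $\zeta E_{1,1}=\Psi_1$ and more relevantly the $\gamma_{ij}$ already live in weight $7$. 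The key computational input is Lemma~\ref{Phi-G}, which expresses $\frac{\zeta}{\varphi_1\varphi_2}\Phi_0$ in terms of $\gamma_{13}-\gamma_{14}$; using this and the $S_4$-action I would show that $M\cdot\{A_i,B_j\}$ together with the products $\varphi_k\cdot(\text{something of weight }4)$ generate $\Sigma_1^1$. More efficiently: since $\Sigma_1^1 = \zeta\cdot{\mathcal M}_1^{-?}$ — actually the cleanest route is to note $\Sigma_1^1$ contains the submodule generated by the $\gamma_{ij}$, compute its Hilbert function as a quotient of $M\otimes(s[2,2]\oplus s[2,1,1])$ by the relations $\sum a_i=0$, $\sum b_{ij,kl}=0$ plus whatever $M$-relations appear, and compare with the known $\dim S_{1,3k+1}(\Gamma[\sqrt{-3}],\det) = (k+1)^2-4$ from (11), which itself follows from (10).

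For the relations: a relation among the five generators $A_1,A_2,A_3,B_1,B_2$ is a relation among the six $\gamma_{ij}$ that is not a multiple of $\sum_{i<j}\gamma_{ij}=0$. I would argue exactly as in the proof for $\Sigma_1^0$: if an irreducible $S_4$-subspace of the module of relations had dimension $\geq 2$, then by subtracting, multiplying by elements of $M$, and using the $S_4$-action one extracts a relation involving only two of the $\gamma_{ij}$, forcing a wedge product like $\gamma_{13}\wedge\gamma_{14}$ (equivalently, via Lemma~\ref{Phi-G}, a multiple of $\Phi_0\wedge\Phi_0$ or a comparison using (7)) to vanish, which contradicts the nonvanishing computations of Section~\ref{examples}. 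So each irreducible component of the relation module is one-dimensional in the $S_4$-sense, hence essentially unique in its isotypic component; a dimension count against (11) then pins the relations down to a single copy of $s[2,1,1]$ (three relations). The main obstacle will be this last step — verifying that the relation module is exactly $s[2,1,1]$ and not, say, $s[2,1,1]$ plus something one-dimensional: this requires either a careful Hilbert-series bookkeeping (matching $5\cdot(\text{generators' Hilbert series}) - (\text{relations}) = \sum_k((k+1)^2-4)t^k$ after accounting for the ambient $\sum a_i=\sum b_{ij,kl}=0$) or an explicit low-weight computation in weight $(1,10)$ and $(1,13)$ where the first $M$-relations among the $A_i,B_j$ appear, using the Fourier-Jacobi expansions listed in Section~\ref{sec:det1-case} to see that the rank of the relation space in degree $3$ is exactly $3$ and that these generate everything higher.
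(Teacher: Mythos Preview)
Your approach is the same as the paper's: the paper gives no proof at all beyond the sentence ``From the structure of the module ${\mathcal M}_1^1$ one can deduce the structure of the $M$-module $\Sigma_1^1$,'' and then states the theorem as a summary. So deducing $\Sigma_1^1$ as a submodule of the free module ${\mathcal M}_1^1 = M\,E_{1,1}\oplus M\,E_{1,4}$, using the $a_i$ and $b_{ij,kl}$ as the generators $A_i$, $B_j$, is exactly what is intended.

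One correction: the wedge argument you import from the proof for $\Sigma_1^0$ does not transfer cleanly. With three generators, two independent relations let you eliminate one and force a wedge of the remaining two to vanish. With five generators this elimination does not terminate in a two-term relation, so you cannot conclude anything from a vanishing $\gamma_{ij}\wedge\gamma_{kl}$ that way. The route that actually works is the one you sketch at the end: express each $A_i,B_j$ in the free basis $E_{1,1},E_{1,4}$ (so relations among the $A_i,B_j$ become syzygies among scalar forms in $M$), or equivalently do the $S_4$-equivariant Hilbert comparison. Concretely, $M_3\otimes(s[2,1,1]\oplus s[2,2])$ decomposes as $s[4]\oplus 2\,s[3,1]\oplus s[2,2]\oplus 2\,s[2,1,1]$ (dimension $15$), while $S_{1,10}(\Gamma[\sqrt{-3}],\det)$ is $s[4]\oplus 2\,s[3,1]\oplus s[2,2]\oplus s[2,1,1]$ (dimension $12$) by the table; once surjectivity of the multiplication map in weight $(1,10)$ is checked (this follows from the free description of ${\mathcal M}_1^1$ together with the cuspidal conditions at the four cusps, or from a direct Fourier--Jacobi computation), the kernel is forced to be a single $s[2,1,1]$, and the full Hilbert-series match $5\binom{k}{2}-3\binom{k-1}{2}=(k+1)^2-4$ shows there are no further relations.
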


\end{subsection}
\begin{subsection}{The module ${\mathcal M}_1^2=\Sigma_1^2$}

\begin{theorem}
There exists an $S_4$-invariant modular cusp 
form $\Psi_1$ in $S_{1,7}(\Gamma[\sqrt{-3}],{\rm det}^2)$, 
and  a $S_4$-anti-invariant form $\Psi_2$ in
 $S_{1,10}(\Gamma[\sqrt{-3}],{\rm det}^2)$
that generate $\Sigma_1^2$ as a free $M$-module: 
for $k\geq 1$ we have
$$
S_{1,3k+4}(\Gamma[\sqrt{-3}],{\rm det}^2)
= M_{3k-3}\, \Psi_1 \oplus M_{3k-6} \Psi_2 \, .
$$
\end{theorem}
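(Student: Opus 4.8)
The plan is to show that the two forms $\Psi_1 \in S_{1,7}(\Gamma[\sqrt{-3}],\det{}^2)$ and $\Psi_2 \in S_{1,10}(\Gamma[\sqrt{-3}],\det{}^2)$ generate a free $M$-submodule of $\Sigma_1^2$ whose graded piece in degree $3k+4$ has dimension $\dim M_{3k-3} + \dim M_{3k-6}$, and then to invoke the dimension formula (11) (which, as noted in Section~\ref{structure}, follows from (10)) to conclude that this submodule is everything. First I would recall from Section~\ref{sec:det2-case} that $\Psi_1$ is $S_4$-invariant of weight $(1,7)$ and $\Psi_2$ is $S_4$-anti-invariant of weight $(1,10)$, and that their wedge product satisfies relation (8), namely $\Psi_1 \wedge \Psi_2 = 2^2 3^7(\rho-1) c \,\zeta^3$ with $c \in \CC^*$. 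This is the crucial non-vanishing input.

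The freeness is then immediate: a relation $\alpha\, \Psi_1 + \beta\, \Psi_2 = 0$ with $\alpha \in M_{3k-3}$, $\beta \in M_{3k-6}$ scalar-valued would, upon wedging with $\Psi_1$ on one side and $\Psi_2$ on the other, force $\beta\,(\Psi_2 \wedge \Psi_1) = 0$ and $\alpha\,(\Psi_1 \wedge \Psi_2) = 0$; since $\Psi_1 \wedge \Psi_2$ is a non-zero scalar-valued form and $M = \CC[\varphi_0,\varphi_1,\varphi_2]$ is an integral domain, this gives $\alpha = \beta = 0$. (Alternatively one reads off non-vanishing directly from the Fourier-Jacobi expansions of the second components of $\Psi_1$ and $\Psi_2$ displayed in Section~\ref{sec:det2-case}, which begin with $X^2 w^2$ times manifestly distinct Jacobi forms.) Hence $M_{3k-3}\,\Psi_1 \oplus M_{3k-6}\,\Psi_2$ is a free $M$-module, and its degree-$(3k+4)$ graded piece has dimension $\dim M_{0,3k-3}(\Gamma[\sqrt{-3}]) + \dim M_{0,3k-6}(\Gamma[\sqrt{-3}])$, which for $k\geq 1$ equals $\binom{k+1}{2} + \binom{k}{2} = k^2$; this is precisely the value of $\dim S_{1,3k+4}(\Gamma[\sqrt{-3}],\det{}^2)$ predicted by (11) (the case $\ell=2$, shifting $k \mapsto k+1$).

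It remains to check that $M_{3k-3}\,\Psi_1 \oplus M_{3k-6}\,\Psi_2 \subseteq S_{1,3k+4}(\Gamma[\sqrt{-3}],\det{}^2)$, i.e. that $\Psi_1,\Psi_2$ are genuinely holomorphic cusp forms (not just meromorphic along the curves $C_{ij}$, since each was defined as a quotient by $\zeta$); but this holomorphicity is exactly what was verified in Section~\ref{sec:det2-case} using $S_4$-invariance to reduce to a single curve $C = C_{34}$ together with the Taylor expansion there, and Proposition~\ref{existenceEisenstein} guarantees that forms with $\ell = 2 \not\equiv 1 = j \pmod 3$ are automatically cusp forms, so $M_{j}^2 = \Sigma_j^2$. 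Combining the inclusion with the matching of dimensions forces equality degree by degree, giving $S_{1,3k+4}(\Gamma[\sqrt{-3}],\det{}^2) = M_{3k-3}\,\Psi_1 \oplus M_{3k-6}\,\Psi_2$ for all $k\geq 1$, as claimed. The only delicate point, and the one I would expect to require genuine care rather than bookkeeping, is the verification that $\Psi_2$ — defined by dividing the specific combination $\varphi_0(\varphi_0-\varphi_1)(\varphi_0+\varphi_1-3\varphi_2)\Phi_0 - \varphi_2(\varphi_1-\varphi_2)(\varphi_1+\varphi_2-3\varphi_0)\Phi_2$ by $\zeta$ — is holomorphic along all six $C_{ij}$; everything else is a consequence of the non-vanishing of $\Psi_1\wedge\Psi_2$ and the dimension count.
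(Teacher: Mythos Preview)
Your proof is correct and follows essentially the same approach as the paper's own argument: use the non-vanishing of $\Psi_1\wedge\Psi_2$ from relation (8) to exclude any $M$-linear relation between $\Psi_1$ and $\Psi_2$, then compare the resulting dimension $\dim M_{3k-3}+\dim M_{3k-6}$ of the free submodule with the dimension formula to force equality. The paper's proof is simply a terse two-sentence version of what you have written out; your additional remarks on holomorphicity and on the logical dependence of (11) on (10) are accurate elaborations rather than a different method.
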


This results in the following table for the irreducible
representations contained in $S_{1,1+3k}(\Gamma[\sqrt{-3}],{\rm \det}^{2})$.

\bigskip
\vbox{
\centerline{\def\quad{\hskip 0.3em\relax}
\vbox{\offinterlineskip
\hrule
\halign{&\vrule#& \quad \hfil#\hfil \strut \quad  \cr
height2pt&\omit&&\omit&&\omit&&\omit&&\omit&&\omit& \cr
& $k$ && $s[4]$ && $s[3,1]$ && $s[2,2]$ && $s[2,1,1]$ && s[1,1,1,1]& \cr
height2pt&\omit&&\omit&&\omit&&\omit&&\omit&&\omit& \cr
\noalign{\hrule}
height2pt&\omit&&\omit&&\omit&&\omit&&\omit&&\omit& \cr
& $0$ && $0$ && $0$ && $0$ && $0$ && $0$ & \cr
& $1$ && $0$ && $0$ && $0$ && $0$ && $0$ & \cr
& $2$ && {\color{blue}$1$} && $0$ && $0$ && $0$ && $0$ & \cr
& $3$ && $0$ && $0$ && $0$ && {\color{red}$1$} && {\color{red}$1$} & \cr
& $4$ && $1$ && $2$ && $1$ && $0$ && $0$ & \cr
& $5$ && $0$ && $1$ && $1$ && $3$ && $2$ & \cr
& $6$ && $3$ && $4$ && $2$ && $2$ && $0$ & \cr
height2pt&\omit&&\omit&&\omit&&\omit&&\omit&&\omit& \cr
} \hrule}
}}

\begin{proof}
Note that by (8) we have $\Psi_1\wedge \Psi_2\neq 0$, so there are
no relations between $\Psi_1$ and $\Psi_2$. 
This together with the dimension formula proves
our claim. 
\end{proof}

\begin{remark}
The form $\Psi_1$ is a lift of
$$
f_{\pm}=\sum a(n)q^n= q \pm 6\sqrt{10}\,  q^2+232\,  q^4+260\, q^7 + \ldots 
$$
in $S_8(\Gamma_0(9))$ with $f_{+}+f_{-}$ lying in the so-called `plus space';
$\Psi_1$ has eigenvalues $\lambda_{\nu}= a(p)+ \nu^2\bar{\nu}^5$ for
$\nu$ with ${\rm N}(\nu)=p\equiv 1 \, (\bmod \, 3)$.
\end{remark}
\end{subsection}

\end{section}
\begin{section}{The structure of ${\mathcal M}_2$}

\begin{subsection}{The module ${\mathcal M}_2^0=\Sigma_2^0$}
We begin by constructing some modular forms.
\begin{lemma}\label{sym2div}
The form ${\rm Sym}^2(\Phi_0)/\varphi_1\varphi_2(\varphi_2-\varphi_1)$
lies in $S_{2,5}(\Gamma[\sqrt{-3}])$.
\end{lemma}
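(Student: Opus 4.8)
The plan is to show that the vector-valued form $\mathrm{Sym}^2(\Phi_0)$, which a priori lies in $S_{2,14}(\Gamma[\sqrt{-3}])$ by Corollary~\ref{cons} applied twice (more precisely, $\Phi_0 \in S_{1,7}$ so $\mathrm{Sym}^2(\Phi_0) \in M_{2,14}$, and it is a cusp form since each entry is a product of Fourier--Jacobi coefficients with no constant term), is divisible by the scalar form $\varphi_1\varphi_2(\varphi_2-\varphi_1)$ of weight $9$, and that the quotient is holomorphic. Since $\varphi_1\varphi_2(\varphi_2-\varphi_1)$ is a product of six distinct irreducible factors vanishing exactly along modular curves (each $\varphi_i = \vartheta_i^3$ vanishes along the three curves $C_{jk}$ through cusp $c_{i+1}$, and $\varphi_2-\varphi_1$ factors through the $\vartheta$'s as in Section~\ref{sec:det1-case}), it suffices to check that $\mathrm{Sym}^2(\Phi_0)$ vanishes to the appropriate order along each of the relevant curves $C_{ij}$, and then invoke that $\Gamma[\sqrt{-3}]\backslash B^* \cong \mathbb{P}^2$ is smooth so a holomorphic section vanishing on the full divisor of a section of the relevant line bundle gives a holomorphic quotient.

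First I would record from Corollary~\ref{vanishingPhi} exactly which curves $\Phi_0$ vanishes on and to what order: from the Taylor expansion along $C_{34}$ given in Section~\ref{examples} (``Forms in $S_{1,7}$''), $\Phi_0$ has expansion $\binom{g_2 u^2 + \cdots}{g_3 u^3 + \cdots}$ along three of the six $C_{ij}$ — the ones through cusp $c_1$ — meaning $\Phi_0$ vanishes to order $\geq 2$ there componentwise (since $u$ is the transverse coordinate), and along the other three $C_{ij}$ (those through $c_2,c_3,c_4$ by $S_4$-equivariance) the first component vanishes but the second component has a nonzero value $h_0 \in S_8^{\mathrm{new}}(\Gamma_0(3))$. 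Since $\Phi_0 = -X_2$ in the notation $X_i$, Corollary~\ref{vanishingPhi} tells us the vanishing locus of $X_2$ is precisely $C_{13}\cup C_{14}\cup C_{34}$ — the three curves through $c_2$. Translating back: $\Phi_0$ vanishes (as a section) along $C_{13}, C_{14}, C_{34}$, and $\mathrm{Sym}^2(\Phi_0)$ therefore vanishes to order $2$ along each of these three curves.

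Next I would match this against the divisor of $\varphi_1\varphi_2(\varphi_2-\varphi_1)$. On $\mathbb{P}^2$ with coordinates $x_1,\dots,x_4$ summing to zero, $\varphi_1 = 0$ is (up to the $S_4$ and $\mu_2$ bookkeeping) supported on the three curves through $c_3 = (0:1:0)$, namely the lines $x_i = x_j$ with $\{i,j\}$ disjoint from... — here I would carefully use the dictionary $c_1 = (1:1:1)$, $c_2=(1:0:0)$, etc., and the fact that $C_{ij}$ maps to $\{x_i = x_j\}$ from Section~\ref{sec:restriction}, to identify that $\varphi_1\varphi_2(\varphi_2-\varphi_1)$ is a weight-$9$ form whose zero divisor is a sum of (simple) modular curves $C_{ij}$; the key point is that the three curves $C_{13},C_{14},C_{34}$ through $c_2$ each appear with multiplicity $1$ in this divisor, so $\mathrm{Sym}^2(\Phi_0)$, vanishing to order $2$ on each, and being a section of a vector bundle twisted by the line bundle $\mathcal{O}(k/3)$ of the appropriate weight, has its full vanishing accounted for. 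Concretely: $\mathrm{Sym}^2(\Phi_0)/\varphi_1\varphi_2(\varphi_2-\varphi_1)$ transforms with weight $(2, 14-9) = (2,5)$ and trivial character ($\mathrm{det}$-character of $\mathrm{Sym}^2(\Phi_0)$ is $\mathrm{det}^0$, and of the denominator is $\mathrm{det}^{?}$ — I would check this is also trivial, as $\varphi_i$ have trivial character on $\Gamma[\sqrt{-3}]$), and it is holomorphic off the $C_{ij}$ trivially, while along each $C_{ij}$ the local computation above shows no pole.

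The main obstacle will be the bookkeeping of \emph{orders of vanishing and which curves}: I must be certain that $\mathrm{Sym}^2(\Phi_0)$ vanishes to order exactly $\geq 2$ (not just $\geq 1$) along precisely $C_{13}, C_{14}, C_{34}$ and to no order along the other three $C_{ij}$, and that these three curves each occur with multiplicity exactly $1$ in the divisor of $\varphi_1\varphi_2(\varphi_2-\varphi_1)$ — so that the division is exact with no leftover pole and no wasted vanishing forcing the quotient into a space of cusp forms of too-low dimension. A subtlety is that $\mathrm{Sym}^2$ of a vector bundle section does not vanish componentwise in a naive way along curves where only one component of $\Phi_0$ vanishes: along the three ``bad'' curves where $\Phi_0^{(1)}$ vanishes but $\Phi_0^{(2)} = h_0 \neq 0$, the section $\mathrm{Sym}^2(\Phi_0) = (\Phi_0^{(1)2}, \Phi_0^{(1)}\Phi_0^{(2)}, \Phi_0^{(2)2})$ has a nonvanishing last entry, so $\mathrm{Sym}^2(\Phi_0)$ does \emph{not} vanish there — which is exactly consistent with those three curves \emph{not} appearing in the denominator's divisor. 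Once this matching is verified (most cleanly by comparing Taylor expansions along $C_{34}$, using the explicit $q$-expansions of $\varphi_{1,0}, \varphi_{1,3}$ and $g_2, g_3$ from the Examples section, and then propagating by the $S_4$-action), holomorphicity of the quotient and hence membership in $S_{2,5}(\Gamma[\sqrt{-3}])$ follows; the cusp-form property is inherited since $\mathrm{Sym}^2(\Phi_0)$ is a cusp form and division by a non-vanishing-at-cusps scalar form (one checks $\varphi_1\varphi_2(\varphi_2-\varphi_1)$ does not vanish identically at any cusp, using the cusp values $c_i$) preserves this.
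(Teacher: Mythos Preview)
Your geometric strategy differs from the paper's and, while salvageable, has two concrete errors. The paper argues algebraically: from $\varphi_i=\vartheta_i^3$ one has $[\varphi_1,\varphi_2]=(\vartheta_1\vartheta_2)^2[\vartheta_1,\vartheta_2]$, so $\mathrm{Sym}^2(\Phi_0)$ is visibly $(\vartheta_1\vartheta_2)^4\,\mathrm{Sym}^2([\vartheta_1,\vartheta_2])$ and hence divisible by $\varphi_1\varphi_2=(\vartheta_1\vartheta_2)^3$; divisibility by $\varphi_2-\varphi_1$ then follows because the $S_3$ stabilizing $\langle\Phi_0\rangle$ permutes $\{\varphi_1,-\varphi_2,\varphi_2-\varphi_1\}$. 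This bypasses all order-of-vanishing bookkeeping.

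In your approach the bookkeeping is off. First, $\varphi_1\varphi_2(\varphi_2-\varphi_1)$ vanishes on \emph{three} lines, not six: $\varphi_1$ cuts out only $C_{13}$ (it is $(x_1-x_3)/4$), $\varphi_2$ only $C_{14}$, and $\varphi_2-\varphi_1$ only $C_{34}$. Second, the orders in the transverse coordinate $u$ are not what you claim: by the paper's own expansions $\varphi_2-\varphi_1=-2\varphi_{1,3}u^3+O(u^6)$ has order~$3$ along $C_{34}$ (the element $\mathrm{diag}(1,1,\rho)\in\Gamma[\sqrt{-3}]$ forces all $u$-orders to be multiples of~$3$ for scalar forms), while $\mathrm{Sym}^2(\Phi_0)$ has component orders $(4,5,6)$ in $u$; so the quotient is holomorphic, but with orders $4\geq 3$, not your $2\geq 1$. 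Third, and most seriously, your cuspidality argument fails: $\varphi_1\varphi_2(\varphi_2-\varphi_1)$ \emph{does} vanish at every cusp (each $c_i$ lies on at least one of $C_{13},C_{14},C_{34}$; equivalently the Fourier--Jacobi constant term of $\varphi_2-\varphi_1$ at the standard cusp is $1-1=0$). Division by a form vanishing at the cusps can destroy cuspidality, so you cannot conclude the quotient is a cusp form this way. The paper checks this separately via the Fourier--Jacobi expansion of the quotient and the $S_4$-action on the cusps.
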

\begin{proof}
Since $[\varphi_1,\varphi_2]= (\vartheta_1\vartheta_2)^2 [\vartheta_1,\vartheta_2]$
we have
$$
-36 \pi^2 \, {\rm Sym}^2(\Phi_0)
=(\vartheta_1\vartheta_2)^4 {\rm Sym}^2([\vartheta_1,\vartheta_2])
$$
from which it is obvious that we can divide by $\varphi_1\varphi_2$.
The stabilizer in $S_3$ of the space spanned by 
$\Phi_0 \in S_{1,7}(\Gamma[\sqrt{-3}])$ 
is isomorphic 
to $S_3$ and the orbit of $\varphi_1$ is 
$\{ \varphi_1, -\varphi_2, \varphi_2-\varphi_1\}$;
we thus 
see that we can divide by $\varphi_2-\varphi_1$ and obtain a modular form in
$M_{2,5}(\Gamma[\sqrt{-3}])$. 

Using the explicit formulae of the functions $\vartheta_1$ and $\vartheta_2$, we
see that the Fourier-Jacobi expansion of the form
$$
-36 \pi^2 
{\rm Sym}^2(\Phi_0)/\varphi_2\varphi_1(\varphi_2-\varphi_1)=
\vartheta_2\vartheta_1{\rm Sym}^2([\vartheta_1,\vartheta_2])/
(\varphi_2-\varphi_1)
$$
at the cusp $(1:0:0)$ has no constant term and 
using the action of the group $S_4$ on the cusps,
we see the same at the other cusps.
\end{proof}
We now put
$$
D_0= 
9\sqrt{-3} \frac{{\rm Sym}^2(\Phi_0)}{\varphi_1\varphi_2(\varphi_1-\varphi_2)}
 \quad \in S_{2,5}(\Gamma[\sqrt{-3}])
$$
with Fourier-Jacobi expansion of the last component of $D_0$
$$
(Y-Z) w+9( Y^3-3Y^2Z+3YZ^2-Z^3) w^3+ 8(Y^4-7Y^3Z+7YZ^3-Z^4) w^4+\dots
$$
and by the action of $R_3$ we get forms $D_1$ and $D_2$ whose Fourier-Jacobi
expansion is obtained by substituting $(\rho^iY,\rho^{2i}Z)$ ($i=1,2$) 
for $(Y,Z)$. These are linearly independent and generate a $S_4$-representation
of type $s[3,1]$.

We have $\Phi_0\wedge\Phi_1=c \zeta^2 \varphi_2$,  
$\Phi_0\wedge\Phi_2=-c \zeta^2 \varphi_1$ and
$\Phi_1\wedge\Phi_2=c \zeta^2 \varphi_0$ and from this we get
$$
{\rm Sym}^2(\Phi_0)\wedge {\rm Sym}^2(\Phi_1)\wedge {\rm Sym}^2(\Phi_2)=
-c^3 \zeta^6 \, \varphi_0\varphi_1\varphi_2.
$$
We conclude:

\begin{lemma}\label{Dwedges}
We have the identity
$$
D_0 \wedge D_1 \wedge D_2= -\rho c^3 \, \zeta^3\, .
$$
\end{lemma}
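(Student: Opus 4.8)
The plan is to reduce the triple wedge to the symmetric-power identity recorded just before the lemma together with relation $(5)$, and then chase constants; the computation is entirely formal. First I would make $D_1$ and $D_2$ explicit. Applying $R_3$ to the defining formula for $D_0$, and using that $R_3$ permutes $\varphi_0\mapsto\varphi_1\mapsto\varphi_2\mapsto\varphi_0$ and $\Phi_0\mapsto\Phi_1\mapsto\Phi_2\mapsto\Phi_0$, gives
$$
D_1=9\sqrt{-3}\,\frac{{\rm Sym}^2(\Phi_1)}{\varphi_2\varphi_0(\varphi_2-\varphi_0)},\qquad
D_2=9\sqrt{-3}\,\frac{{\rm Sym}^2(\Phi_2)}{\varphi_0\varphi_1(\varphi_0-\varphi_1)}.
$$
Since the triple wedge is multilinear, the scalar denominators pull out; the product of the $\varphi$-factors in the denominators is $\varphi_0^2\varphi_1^2\varphi_2^2$, so
$$
D_0\wedge D_1\wedge D_2
=\frac{(9\sqrt{-3})^3}{\varphi_0^2\varphi_1^2\varphi_2^2\,(\varphi_1-\varphi_2)(\varphi_2-\varphi_0)(\varphi_0-\varphi_1)}\;
{\rm Sym}^2(\Phi_0)\wedge{\rm Sym}^2(\Phi_1)\wedge{\rm Sym}^2(\Phi_2).
$$

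Next I would substitute the identity displayed just above the lemma, ${\rm Sym}^2(\Phi_0)\wedge{\rm Sym}^2(\Phi_1)\wedge{\rm Sym}^2(\Phi_2)=-c^3\zeta^6\,\varphi_0\varphi_1\varphi_2$ (itself a consequence of $\Phi_0\wedge\Phi_1=c\zeta^2\varphi_2$, $\Phi_0\wedge\Phi_2=-c\zeta^2\varphi_1$, $\Phi_1\wedge\Phi_2=c\zeta^2\varphi_0$ and the elementary fact that $v_0,v_1,v_2\in\CC^2$ satisfy ${\rm Sym}^2(v_0)\wedge{\rm Sym}^2(v_1)\wedge{\rm Sym}^2(v_2)=(v_0\wedge v_1)(v_0\wedge v_2)(v_1\wedge v_2)$). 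This cancels one copy of $\varphi_0\varphi_1\varphi_2$, leaving
$$
D_0\wedge D_1\wedge D_2
=\frac{-(9\sqrt{-3})^3\,c^3\,\zeta^6}{\varphi_0\varphi_1\varphi_2\,(\varphi_1-\varphi_2)(\varphi_2-\varphi_0)(\varphi_0-\varphi_1)}.
$$
Finally I would invoke relation $(5)$: the three difference factors here are, up to sign, $\varphi_1-\varphi_0$, $\varphi_2-\varphi_0$, $\varphi_2-\varphi_1$, and two sign changes cancel, so the denominator equals $-\sqrt{-3}\,3^7\zeta^3/\rho$; the expression then collapses to $(9\sqrt{-3})^3c^3\rho\,\zeta^3/(\sqrt{-3}\,3^7)$, and since $(9\sqrt{-3})^3=-3^7\sqrt{-3}$ this is exactly $-\rho\,c^3\,\zeta^3$.

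There is no genuine obstacle here; the two points deserving care are that the differences $(\varphi_i-\varphi_j)$ occurring above match, up to an even number of sign flips, those in $(5)$, and that the normalization constant $(9\sqrt{-3})^3=-3^7\sqrt{-3}$ is precisely what clears the factor $3^7$ in $(5)$. As an independent check one can compare the leading Fourier--Jacobi coefficient of the last component of $D_0\wedge D_1\wedge D_2$, computed from the expansions of the $D_i$ recorded above, with that of $\zeta^3$ obtained by cubing the expansion of $\zeta$; alternatively, one may note that $D_0\wedge D_1\wedge D_2$ lies in the one-dimensional $S_4$-anti-invariant part of $M_{18}(\Gamma[\sqrt{-3}])$, which is spanned by $\zeta^3$ (the divisor of $\zeta^3$ being the six lines $x_i=x_j$), so only the scalar needs to be pinned down — which the computation above does.
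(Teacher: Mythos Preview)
Your proof is correct and follows exactly the route the paper takes: use the identity ${\rm Sym}^2(\Phi_0)\wedge{\rm Sym}^2(\Phi_1)\wedge{\rm Sym}^2(\Phi_2)=-c^3\zeta^6\varphi_0\varphi_1\varphi_2$ (recorded immediately before the lemma), pull out the scalar denominators from the definitions of the $D_i$, and cancel against relation~$(5)$. The paper simply writes ``We conclude'' after the ${\rm Sym}^2$-wedge identity; you have carried out the constant-tracking that the paper leaves implicit, and your arithmetic with $(9\sqrt{-3})^3=-3^7\sqrt{-3}$ and the sign-matching against $(5)$ is correct.
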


We now have the structure of $\Sigma_2^0$:
\begin{theorem}
The $M$-module $\Sigma_2^{0}$ is generated by the three modular forms 
$D_0$, $D_1$ and $D_3$ of weight $(2,5)$ that generate a $S_4$-representation
of type $s[3,1]$: for $k\geq 0$ we have
$$
S_{2,3k+5}(\Gamma[\sqrt{-3}]) = M_{3k}(\Gamma[\sqrt{-3}])
\otimes  \langle D_0, D_1, D_2 \rangle
$$
\end{theorem}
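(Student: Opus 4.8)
The plan is to mimic exactly the structure of the proof for $\Sigma_1^0$ in Section~\ref{structure}. First I would observe that by Lemma~\ref{sym2div} and the action of $R_3$ the three forms $D_0,D_1,D_2$ lie in $S_{2,5}(\Gamma[\sqrt{-3}])$ and are linearly independent (from the Fourier--Jacobi expansions given), spanning the $S_4$-representation $s[3,1]$; since they are cusp forms and $\Sigma_2^0=\mathcal M_2^0$ by Proposition~\ref{existenceEisenstein} (as $\ell=0\not\equiv 2\pmod 3$), multiplication gives a map of $M$-modules
$$
M\otimes\langle D_0,D_1,D_2\rangle\longrightarrow \Sigma_2^0,\qquad f\otimes D_i\mapsto f\,D_i.
$$
I would then show this map is an \emph{isomorphism}: surjectivity will follow from a dimension count, and injectivity (freeness) will follow from Lemma~\ref{Dwedges}.

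For injectivity: suppose $\sum_{i=0}^2 f_i D_i=0$ is a nontrivial relation with $f_i\in M$. Using the $S_4$-action and taking $M$-linear combinations as in the $\Sigma_1^0$ proof, I would reduce to a relation involving only two of the $D_i$, say $g_1 D_1+g_2 D_2=0$ with $(g_1,g_2)\neq(0,0)$. Because $D_0,D_1,D_2$ are sections of the rank-$3$ bundle $\mathrm{Sym}^2 A_?\otimes L^{?}$ (more concretely: they are $\mathrm{Sym}^2(\mathbb C^2)$-valued, a $3$-dimensional target), a relation $g_1D_1+g_2D_2=0$ does not immediately force a wedge to vanish the way it did in the vector-valued weight-$1$ case. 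Instead I would argue that $D_1$ and $D_2$ are pointwise linearly independent on a Zariski-dense set — this can be read off either from Lemma~\ref{Dwedges} (the triple wedge $D_0\wedge D_1\wedge D_2=-\rho c^3\zeta^3$ is not identically zero, so the three are generically independent, hence any two are generically independent) — so $g_1D_1+g_2D_2=0$ forces $g_1=g_2=0$ on a dense set, hence $g_1=g_2=0$. Tracing back, the original relation was trivial, so the module is free.

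For surjectivity: the freeness just established shows that the image submodule has graded piece in degree $3k+5$ of dimension $3\dim M_{3k}(\Gamma[\sqrt{-3}])=3\binom{k+2}{2}$. I would compare this with the dimension formula for $\dim S_{2,3k+5}(\Gamma[\sqrt{-3}])$ from Section~\ref{modules} (the $j\equiv 2\pmod 3$, $\ell=0$ case: $\dim M_{2,2+3(k+1)}(\Gamma[\sqrt{-3}])=\tfrac{3}{2}(k+1)^2+\tfrac{3}{2}(k+1)+c'$ with the constant pinned down by $\mathcal M_2^0$ being all cusp forms); checking that $3\binom{k+2}{2}$ matches this for all $k\ge 0$ gives surjectivity, and hence the claimed equality
$$
S_{2,3k+5}(\Gamma[\sqrt{-3}])=M_{3k}(\Gamma[\sqrt{-3}])\otimes\langle D_0,D_1,D_2\rangle.
$$

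The main obstacle is the reduction step in the injectivity argument: in the weight-$(1,k)$ case a two-term relation collapsed cleanly to a scalar wedge identity contradicting $(7)$, whereas here the target $\mathrm{Sym}^2(\mathbb C^2)$ is three-dimensional, so a two-term relation $g_1D_1+g_2D_2=0$ must be ruled out by a genericity/independence argument rather than a single algebraic identity. The clean way to do this is precisely Lemma~\ref{Dwedges}: the non-vanishing of $D_0\wedge D_1\wedge D_2$ guarantees the $D_i$ are generically a frame, which kills all relations. One must also double-check that the dimension constant in the formula of Section~\ref{modules} is correctly computed (using that $\mathcal M_2^0$ consists entirely of cusp forms), since the entire surjectivity half rests on the numerical coincidence $\dim S_{2,3k+5}=3\dim M_{0,3k}$.
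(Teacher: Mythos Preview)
Your proposal is correct and lands on the same two ingredients the paper uses: Lemma~\ref{Dwedges} for injectivity (freeness) and the dimension formula $\dim S_{2,3k+2}(\Gamma[\sqrt{-3}])=3k(k+1)/2$ for surjectivity. The only difference is that your detour through reducing to a two-term relation (imitating the $\Sigma_1^0$ argument) is unnecessary here and creates the very obstacle you worry about: the paper simply observes that any nontrivial relation $\sum f_i D_i=0$ would make $D_0,D_1,D_2$ pointwise dependent on the (dense) locus where some $f_i\neq 0$, forcing $D_0\wedge D_1\wedge D_2\equiv 0$ and contradicting Lemma~\ref{Dwedges} directly --- no $S_4$-reduction or two-term step is needed, since the target $\mathrm{Sym}^2(\CC^2)$ has exactly rank~$3$ and the triple wedge is already the right scalar invariant.
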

\begin{proof}
Suppose that there is a relation $\sum_{i=1}^3 f_i D_i$ with $f_i$
modular forms of some weight. Then the wedge $D_0\wedge D_1 \wedge D_2$
would vanish identically and this contradicts Lemma \ref{Dwedges}.  
The dimension formula from \cite{B-vdG} we need is
$$
\dim S_{2,3k+2}(\Gamma[\sqrt{-3}])= 3 \, k\, (k+1)/2 \qquad
\hbox{\rm for $k\geq 1$}
$$
The dimension argument is the same as in the beginning of the preceding section.
It suffices to have the formula for $\dim S_{2,3k+2}(\Gamma_1[\sqrt{-3}])$.
This finishes the proof.
\end{proof}

We finish by giving a table for the irreducible representations 
contained in $S_{2,2+3k}(\Gamma[\sqrt{-3}])$.

\bigskip
\vbox{
\centerline{\def\quad{\hskip 0.3em\relax}
\vbox{\offinterlineskip
\hrule
\halign{&\vrule#& \quad \hfil#\hfil \strut \quad  \cr
height2pt&\omit&&\omit&&\omit&&\omit&&\omit&&\omit& \cr
& $k$ && $s[4]$ && $s[3,1]$ && $s[2,2]$ && $s[2,1,1]$ && s[1,1,1,1]& \cr
height2pt&\omit&&\omit&&\omit&&\omit&&\omit&&\omit& \cr
\noalign{\hrule}
height2pt&\omit&&\omit&&\omit&&\omit&&\omit&&\omit& \cr
& $0$ && $0$ && $0$ && $0$ && $0$ && $0$ & \cr
& $1$ && $0$ && {\color{red}$1$} && $0$ && $0$ && $0$ & \cr
& $2$ && $0$ && $1$ && $1$ && $1$ && $1$ & \cr
& $3$ && $1$ && $3$ && $1$ && $2$ && $0$ & \cr
height2pt&\omit&&\omit&&\omit&&\omit&&\omit&&\omit& \cr
} \hrule}
}}

\end{subsection}
\begin{subsection}{The module ${\mathcal M}_2^1=\Sigma_2^1$}
We begin by constructing the modular forms in 
$S_{2,5}(\Gamma[\sqrt{-3}],\det{})$
by considering
$$
D_0'=\frac{\zeta (D_1+D_2)}{\varphi_0(\varphi_1-\varphi_2)}
$$
and observing that these are holomorphic using the expansion along
the $C_{ij}$.
The action of $S_4$ thus gives rise to forms $D_i'$ for $i=0,1,2$ 
in $S_{2,5}(\Gamma[\sqrt{-3}],\det{})$.
The $D_i'$ generate a representation of type $s[3,1]$.

\begin{theorem} The $M$-module $\Sigma_2^1$ is generated by the three
modular forms $D_0'$, $D_1'$ and $D_2'$  of weight $(2,5)$ that generate
a $S_4$-representation of type $s[3,1]$: for $k\geq 0$ we have
$$
S_{2,3k+5}(\Gamma[\sqrt{-3}],\det)=M_{3k}(\Gamma[\sqrt{-3}])\otimes \langle
D_0',D_1',D_2'\rangle \, .
$$
\end{theorem}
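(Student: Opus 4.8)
The plan is to follow the template of the proof for $\Sigma_2^0$: first show that $D_0'$, $D_1'$, $D_2'$ satisfy no relation over $M$, so that they span a free $M$-submodule of $\Sigma_2^1$, and then match the graded dimensions of this submodule with those of $\Sigma_2^1$ coming from \cite{B-vdG}. Note at the outset that $2\not\equiv 1\,(\bmod\,3)$, so Proposition \ref{existenceEisenstein} gives $\mathcal M_2^1=\Sigma_2^1$ and it suffices to work with $\Sigma_2^1$; the holomorphicity of the $D_i'$ and the fact that they are cusp forms of weight $(2,5)$ with character $\det$ spanning an $S_4$-representation of type $s[3,1]$ have already been recorded above.

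For freeness I would compute the scalar cusp form $D_0'\wedge D_1'\wedge D_2'$ of weight $18$ (the wedge of three elements of the three-dimensional space ${\rm Sym}^2(\CC^2)$ is scalar-valued, exactly as in Lemma \ref{Dwedges}). Applying the order-three substitution $(Y,Z)\mapsto(\rho Y,\rho^2 Z)$, which cyclically permutes $(\varphi_0,\varphi_1,\varphi_2)$ and $(D_0,D_1,D_2)$ and fixes $\zeta$, to $D_0'=\zeta(D_1+D_2)/(\varphi_0(\varphi_1-\varphi_2))$ yields $D_1'=\zeta(D_2+D_0)/(\varphi_1(\varphi_2-\varphi_0))$ and $D_2'=\zeta(D_0+D_1)/(\varphi_2(\varphi_0-\varphi_1))$. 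The three scalar denominators multiply to $\varphi_0\varphi_1\varphi_2(\varphi_1-\varphi_0)(\varphi_2-\varphi_0)(\varphi_2-\varphi_1)$, which by relation $(5)$ equals $(\sqrt{-3}\,3^7/(-\rho))\,\zeta^3$. On the numerator side a direct expansion in the one-dimensional space $\wedge^3{\rm Sym}^2(\CC^2)$ gives $(D_1+D_2)\wedge(D_2+D_0)\wedge(D_0+D_1)=2\,D_0\wedge D_1\wedge D_2$, which by Lemma \ref{Dwedges} equals $-2\rho c^3\,\zeta^3$ with $c\in\CC^*$. Hence
$$
D_0'\wedge D_1'\wedge D_2'=\frac{2\rho^2 c^3}{\sqrt{-3}\,3^7}\,\zeta^3\neq 0 .
$$
Consequently, if $f_0 D_0'+f_1 D_1'+f_2 D_2'=0$ with $f_i\in M$, then wedging with $D_1'\wedge D_2'$ gives $f_0\,(D_0'\wedge D_1'\wedge D_2')=0$, so $f_0=0$ since $\zeta^3\neq 0$, and likewise $f_1=f_2=0$. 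Thus $M\otimes\langle D_0',D_1',D_2'\rangle$ embeds into $\Sigma_2^1$ as a free submodule.

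It remains to count dimensions, just as for $\Sigma_2^0$. The degree-$k$ graded piece of this free submodule lies in $S_{2,3k+5}(\Gamma[\sqrt{-3}],\det)$ and has dimension $3\dim M_{0,3k}(\Gamma[\sqrt{-3}])=3(k+1)(k+2)/2$. The holomorphic Lefschetz formula recorded in Section \ref{modules} (applied with $j=2$, $\ell=1$ and weight $2+3\kappa$, so $\kappa=k+1$), together with the cohomological computations of \cite{B-vdG} that fix the constant term, give $\dim S_{2,3k+5}(\Gamma[\sqrt{-3}],\det)=3(k+1)(k+2)/2$ for $k\geq 0$. Since the two dimensions agree in every weight, the free submodule equals $\Sigma_2^1$, which proves the theorem. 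The step I expect to be the main obstacle is the wedge-product identity: one must pin down precisely which $S_4$-translates of $D_0'$ are taken as $D_1'$ and $D_2'$ so that the product of the three scalar denominators is exactly the discriminant-type expression appearing in relation $(5)$, and check that the vector wedge collapses to $2\,D_0\wedge D_1\wedge D_2$; after that it reduces to Lemma \ref{Dwedges}. A secondary input, common to all the module theorems here, is the dimension formula for $S_{2,3k+5}(\Gamma[\sqrt{-3}],\det)$ imported from \cite{B-vdG}.
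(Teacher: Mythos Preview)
Your proof is correct and follows essentially the same approach as the paper's: show the wedge $D_0'\wedge D_1'\wedge D_2'$ is a nonzero multiple of $\zeta^3$ (hence there are no $M$-relations among the $D_i'$), and then invoke the dimension formula from \cite{B-vdG}. Your treatment is in fact more detailed than the paper's two-line proof---you explicitly identify $D_1'$ and $D_2'$ via the cyclic action of $R_3$, verify the numerator wedge $(D_1+D_2)\wedge(D_2+D_0)\wedge(D_0+D_1)=2\,D_0\wedge D_1\wedge D_2$, match the denominator product against relation~(5), and spell out the dimension count; the paper simply asserts that the wedge is a nonzero multiple of $\zeta^3$ and that the dimension formula finishes.
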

\begin{proof}
The wedge $D_0'\wedge D_1'\wedge D_2'$ is a non-zero multiple of $\zeta^3$,
and this shows that there can be no relations between these generators.
The dimension formula now implies the result.
\end{proof}

The eigenvalues of Hecke eigenforms in $S_{2,5}(\Gamma[\sqrt{-3}],\det{})$ 
are the $F$-conjugates of the corresponding ones in 
$S_{2,5}(\Gamma[\sqrt{-3}])$ as follows from cohomological arguments,
cf.\ \cite{B-vdG}.
By the preceding two theorems the spaces $S_{2,2+3k}(\Gamma[\sqrt{-3}])$ and 
$S_{2,2+3k}(\Gamma[\sqrt{-3}],\det)$ 
are isomorphic as $S_4$-representations.
\end{subsection}
\begin{subsection}{The module ${\mathcal M}_2^2$}

\begin{theorem}
The $M$-module ${\mathcal M}_2^2$ is freely 
generated by an $S_4$-invariant form
$K_2$ of weight $(2,2)$, an $S_4$-anti-invariant form
$K_5$ of weight $(2,5)$ and an $S_4$-invariant form
$K_8$ of weight $(2,8)$:
$$
M_{2,2+3k}(\Gamma[\sqrt{-3}],{\rm det}^2)=
M_{0,3k}\,  K_2 \oplus M_{0,3k-3}\, K_5 \oplus M_{0,3k-6} \, K_8 \, .
$$
\end{theorem}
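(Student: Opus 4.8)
The plan is to follow the pattern already used for the modules $\Sigma_2^0$, $\Sigma_2^1$ and ${\mathcal M}_1^1$: exhibit three explicit generators in the claimed weights, use a wedge-product identity to see that they satisfy no $M$-linear relation (so they generate a \emph{free} submodule of rank three), and then compare Hilbert series with the dimension formula of Section \ref{modules} to conclude that this submodule is all of ${\mathcal M}_2^2$.

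First I would build the generators from the Eisenstein generators $E_{1,1}$ and $E_{1,4}$ of ${\mathcal M}_1^1$. Put $K_2={\rm Sym}^2(E_{1,1})$ (already introduced in Section \ref{eisenstein}), $K_8={\rm Sym}^2(E_{1,4})$, and let $K_5=E_{1,1}\cdot E_{1,4}$ be the image of $E_{1,1}\otimes E_{1,4}$ under the ${\rm GL}_2$-equivariant multiplication ${\rm Sym}^1\otimes{\rm Sym}^1\to{\rm Sym}^2$. No division by $\zeta$ is involved, so these are automatically holomorphic vector-valued modular forms; their weights are $(2,2)$, $(2,5)$, $(2,8)$ and their character is $\det{}^2$. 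Since $E_{1,1}$ is $S_4$-anti-invariant and $E_{1,4}$ is $S_4$-invariant (Section \ref{eisenstein}), the form $K_5$ is $S_4$-anti-invariant and $K_2$, $K_8$ are $S_4$-invariant, exactly as the statement requires.

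Next I would compute the triple wedge $K_2\wedge K_5\wedge K_8$, i.e.\ the determinant of the $3\times 3$ matrix whose columns are the component vectors of the three forms. Using $\det(j_2)=j_1/\det(g)$ and $\det{}^3=1$ on $\Gamma[\sqrt{-3}]$ one finds that this wedge is a scalar-valued modular form in $M_{0,18}(\Gamma[\sqrt{-3}])$ on which $S_4$ acts by the sign character. Because $\wedge^3{\rm Sym}^2V\cong(\wedge^2V)^{\otimes 3}$ for a rank-two bundle $V$, a short direct computation on the components identifies this wedge, up to a non-zero universal constant, with $(E_{1,1}\wedge E_{1,4})^{3}=(\Psi_1\wedge\Psi_2)^{3}/\zeta^{6}$, which by (8) is a non-zero multiple of $\zeta^{3}$ (consistent with the fact that the sign-isotypic part of $M_{0,18}(\Gamma[\sqrt{-3}])$ is the one-dimensional space spanned by $\zeta^3$, cf.\ (5)). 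Hence $K_2\wedge K_5\wedge K_8\neq 0$. If a non-trivial relation $f_2K_2+f_5K_5+f_8K_8=0$ with $f_i\in M$ existed, then on the Zariski-dense locus where some $f_i$ is non-zero the corresponding $K_i$ would be a ${\rm Frac}(M)$-combination of the other two and the wedge would vanish identically; so no such relation exists, and $MK_2\oplus MK_5\oplus MK_8$ is a free $M$-module of rank three.

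Finally, surjectivity is a dimension count. The graded piece of $MK_2\oplus MK_5\oplus MK_8$ in weight $2+3k$ has dimension $\dim M_{0,3k}+\dim M_{0,3k-3}+\dim M_{0,3k-6}=\tfrac{3}{2}k^{2}+\tfrac{3}{2}k+1$ for every $k\geq 0$ (the generators sit in degrees $0,1,2$ and each $\varphi_i$ raises the degree by one). On the other hand, the holomorphic Lefschetz formula of Section \ref{modules} gives $\dim M_{2,3k+2}(\Gamma[\sqrt{-3}],\det{}^2)=\tfrac{3}{2}k^{2}+\tfrac{3}{2}k+c'$, and the constant is $c'=1$: this is the cohomological input of \cite{B-vdG}, and it can also be recovered by subtracting from $\dim M_{2,3k+2}(\Gamma_1[\sqrt{-3}])$ the contributions of the characters $\det{}^{0}$ and $\det{}^{1}$, which by the theorems on $\Sigma_2^{0}$ and $\Sigma_2^{1}$ (these spaces consist entirely of cusp forms, because $2\not\equiv\ell\ (\bmod\, 3)$) each equal $3\binom{k+1}{2}$. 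Since the graded dimensions agree in every degree, $MK_2\oplus MK_5\oplus MK_8={\mathcal M}_2^2$, which is the theorem. The only genuinely non-formal ingredient is this dimension formula from \cite{B-vdG}; the wedge non-vanishing, by contrast, reduces immediately to the already-established identity (8), so the main obstacle is ensuring that the imported cohomological dimension count is exactly the Hilbert series of a free module with generators in weights $(2,2)$, $(2,5)$ and $(2,8)$.
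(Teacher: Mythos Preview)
Your argument is correct and follows the paper's template: construct three generators in the stated weights, show their triple wedge is non-zero so the submodule they generate is free of rank three, then match Hilbert series against the dimension formula of Section~\ref{modules}. The one substantive difference is your construction of $K_5$. You take the symmetric product $E_{1,1}\cdot E_{1,4}$ under ${\rm Sym}^1\otimes{\rm Sym}^1\to{\rm Sym}^2$, whereas the paper defines $K_5$ as an explicit combination $(\sum_i c_i(\varphi)\,D_i)/\zeta$ built from the generators $D_i$ of $\Sigma_2^0$, and then argues that $K_2\wedge K_5\wedge K_8$ is a non-zero rational multiple of $D_0\wedge D_1\wedge D_2$, invoking Lemma~\ref{Dwedges}. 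Your route is more self-contained: the isomorphism $\wedge^3{\rm Sym}^2V\cong(\wedge^2V)^{\otimes 3}$ yields $K_2\wedge K_5\wedge K_8=c\,(E_{1,1}\wedge E_{1,4})^3$ on the nose, and equation~(8) finishes the job without any reference to the $D_i$ or to the structure of $\Sigma_2^0$. Once either proof is complete, the $S_4$-anti-invariant part of $M_{2,5}(\Gamma[\sqrt{-3}],\det{}^2)$ is one-dimensional, so the two candidate forms $K_5$ coincide up to a non-zero scalar and the resulting sets of generators are the same.
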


We give the irreducible representations in $M_{2,2+3k}(\Gamma[\sqrt{-3}],\det{}^2)$ for a few values of $k$.

\bigskip
\vbox{
\centerline{\def\quad{\hskip 0.3em\relax}
\vbox{\offinterlineskip
\hrule
\halign{&\vrule#& \quad \hfil#\hfil \strut \quad  \cr
height2pt&\omit&&\omit&&\omit&&\omit&&\omit&&\omit& \cr
& $k$ && $s[4]$ && $s[3,1]$ && $s[2,2]$ && $s[2,1,1]$ && s[1,1,1,1]& \cr
height2pt&\omit&&\omit&&\omit&&\omit&&\omit&&\omit& \cr
\noalign{\hrule}
height2pt&\omit&&\omit&&\omit&&\omit&&\omit&&\omit& \cr
& $0$ && ${\color{green}1}$ && $0$ && $0$ && $0$ && $0$ & \cr
& $1$ && $0$ && $0$ && $0$ && ${\color{green}1}$ && ${\color{green}1}$ & \cr
& $2$ && ${\color{green}1}+{\color{blue}1}$ && ${\color{green}1}+{\color{red}1}$ && ${\color{blue}1}$ && $0$ && $0$ & \cr
& $3$ && $0$ && $1$ && $1$ && $4$ && $2$ & \cr
& $4$ && $4$ && $5$ && $3$ && $2$ && $0$ & \cr
height2pt&\omit&&\omit&&\omit&&\omit&&\omit&&\omit& \cr
} \hrule}
}}

\begin{proof}
The form $K_2={\rm Sym}^2(E_{1,1})$; it has an alternative description as
$(\varphi_0D_0+\varphi_1D_1+\varphi_2D_2)/\zeta$. The form
$K_5$ is the form
$$
(\varphi_0(\varphi_0-\varphi_1-\varphi_2)\, D_0 
+(\varphi_1(-\varphi_0+\varphi_1-\varphi_2)\, D_1 +
\varphi_2 (-\varphi_0-\varphi_1+\varphi_2)\, D_2)/\zeta
$$
which is $S_4$-anti-invariant, while $K_8=
{\rm Sym}^2(E_{1,4})$. The wedge $K_2\wedge K_5\wedge K_8$ is the product of
$D_0\wedge D_1\wedge D_2$ with a non-zero rational function, 
hence by Lemma \ref{Dwedges} does not vanish. 
This implies that there can be no relations 
over $M$ between these generators. Therefore these elements generate
a $M$-module with graded piece of dimension $(3\, k^2+3\, k+2)/2$. 
By the
dimension formula (see Section \ref{modules}) these generators 
thus exhaust the whole module ${\mathcal M}_2^2$.

\end{proof}

\end{subsection}

\end{section}
\begin{section}{The structure of ${\mathcal M}_3^0$}
In this section we give the structure of the $M$-modules
${\mathcal M}_3^0$ and $\Sigma_3^0$. 
We start by constructing Eisenstein series in weight $(3,3)$. 
For this we define
$$
E_i= \frac{{\rm Sym}^3(\Phi_i)}{(\varphi_{i+1}\varphi_{i+2} 
(\varphi_{i+1}-\varphi_{i+2}))^2}
$$
for $i=0,1,2$ (taken $\bmod \, 3)$ and
$$
E_3=\frac{{\rm Sym}^3(\Phi_0+\Phi_1+\Phi_2)}{((\varphi_0-\varphi_1)
(\varphi_0-\varphi_2)(\varphi_1-\varphi_2))^2}.
$$
\begin{lemma}\label{Eis33}
The forms $E_0, E_1, E_2$ and $E_3$ are modular forms  of weight $(3,3)$
on $\Gamma[\sqrt{-3}]$ and at each of the four cusps exactly one of these
is non-zero. They generate a $S_4$-representation $s[2,1,1]\oplus s[1,1,1,1]$
and we have
$$
E_0\wedge E_1 \wedge E_2 \wedge E_3= c_2 \, \zeta^3\, \quad 
\hbox{\rm with $c_2 \in {\CC}^*$}
$$
\end{lemma}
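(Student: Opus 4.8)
The plan is to verify in turn the weight, the holomorphy, the behaviour at the cusps, the $S_{4}$-type, and the wedge identity; the organising device is to rewrite everything in terms of the forms $x_{1},\dots,x_{4}$ of Section~\ref{ringofscalarmodforms} and $X_{1},\dots,X_{4}$ of Section~\ref{examples}. Using $\varphi_{0}=(x_{1}-x_{2})/4$, $\varphi_{1}=(x_{1}-x_{3})/4$, $\varphi_{2}=(x_{1}-x_{4})/4$ together with $\Phi_{0}=-X_{2}$, $\Phi_{1}=-X_{3}$, $\Phi_{2}=-X_{4}$ and $\Phi_{0}+\Phi_{1}+\Phi_{2}=X_{1}$, a short computation gives
$$
E_{i}=\varepsilon_{i}\,4096\,\widetilde{E}_{\tau(i)},\qquad
\widetilde{E}_{m}:=\frac{{\rm Sym}^{3}(X_{m})}{\Delta_{m}^{\,2}},\qquad
\Delta_{m}:=\!\!\prod_{\substack{1\le j<k\le 4\\ j,k\ne m}}\!\!(x_{j}-x_{k}),
$$
where $\tau(0)=2$, $\tau(1)=3$, $\tau(2)=4$, $\tau(3)=1$ and $\varepsilon_{3}=1$, $\varepsilon_{0}=\varepsilon_{1}=\varepsilon_{2}=-1$. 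That the $E_{i}$ have weight $(3,3)$ and trivial character is then immediate (numerators of weight $(3,21)$, scalar denominators of weight $18$). Since $X_{m}\mapsto{\rm sgn}(\sigma)X_{\sigma(m)}$ and $x_{j}-x_{k}\mapsto{\rm sgn}(\sigma)(x_{\sigma(j)}-x_{\sigma(k)})$ for $\sigma\in S_{4}$, the sign ambiguities in $\Delta_{m}$ square away and $\widetilde{E}_{m}\mapsto{\rm sgn}(\sigma)\widetilde{E}_{\sigma(m)}$; hence, granting holomorphy and linear independence, the $E_{i}$ span the permutation representation of $S_{4}$ on four letters tensored with the sign character, which is $s[2,1,1]\oplus s[1,1,1,1]$.

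For holomorphy: $S_{4}$ acts transitively up to sign on $\{E_{0},E_{1},E_{2},E_{3}\}$, and for a fixed $i$ the stabiliser permutes transitively the three curves $C_{jk}$ occurring in the polar locus of $E_{i}$ (they pass through one common cusp, on which $X_{\tau(i)}$ vanishes by Corollary~\ref{vanishingPhi}); so it is enough to show that $E_{0}={\rm Sym}^{3}(\Phi_{0})/(\varphi_{1}\varphi_{2}(\varphi_{1}-\varphi_{2}))^{2}$ is holomorphic along $C=C_{34}=\{u=0\}$. One reads this off the Taylor expansion along $C$ of Section~\ref{sec:restriction}: there $\Phi_{0}=(g_{2}u^{2}+\cdots,\ g_{3}u^{3}+\cdots)$ with $g_{2}\ne 0$, so every component of ${\rm Sym}^{3}(\Phi_{0})$ is divisible by $u^{6}$, whereas $\varphi_{1},\varphi_{2}$ restrict to $\varphi_{1,0}\ne 0$ and $\varphi_{1}-\varphi_{2}=2\varphi_{1,3}u^{3}+\cdots$ with $\varphi_{1,3}\ne 0$ (compare $u^{3}$-coefficients in relation (5)), so the denominator vanishes to order exactly $6$ along $C$. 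Hence $E_{0}$ is holomorphic and non-zero at the generic point of $C$, and since $\Gamma[\sqrt{-3}]\backslash B^{*}\cong{\PP}^{2}$ is smooth this gives holomorphy everywhere; the same works for $E_{3}$ via $X_{1}=\bigl((g_{2}+2h_{2})u^{2}+\cdots,\ (g_{3}+2h_{3})u^{3}+\cdots\bigr)$ along $C$. (For $E_{0}$ one may instead argue as in Lemma~\ref{sym2div}: ${\rm Sym}^{3}(\Phi_{0})$ is a constant times $(\varphi_{1}\varphi_{2})^{2}\,{\rm Sym}^{3}([\vartheta_{1},\vartheta_{2}])$, and applying the order-$3$ group fixing $\langle\Phi_{0}\rangle$, under which ${\rm Sym}^{3}(\Phi_{0})$ transforms by the sign character and $\varphi_{1}$ has orbit $\{\varphi_{1},-\varphi_{2},\varphi_{2}-\varphi_{1}\}$, forces divisibility by the least common multiple $(\varphi_{1}\varphi_{2}(\varphi_{1}-\varphi_{2}))^{2}$.)

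For the cusp behaviour one computes, from the Fourier--Jacobi expansions of the $\Phi_{i}$ and $\varphi_{i}$ recalled in Sections~\ref{examples} and \ref{ringofscalarmodforms}, the Fourier--Jacobi expansion of each $E_{i}$ at the cusp of Section~\ref{FJ}: comparing the orders in $w$ of numerator and denominator shows that exactly one of $E_{0},\dots,E_{3}$ has a non-zero constant term there, the numerators of the other three vanishing to strictly higher order. As $S_{4}$ acts transitively on the four cusps and permutes the $E_{i}$ up to signs, exactly one $E_{i}$ is non-zero at each cusp; in particular the $E_{i}$ are $\CC$-linearly independent, completing the claim that they span $s[2,1,1]\oplus s[1,1,1,1]$.

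Finally the wedge. From $E_{i}=\varepsilon_{i}\,4096\,\widetilde{E}_{\tau(i)}$ one gets (the signs $\varepsilon_i$ and the reordering cancelling) $E_{0}\wedge E_{1}\wedge E_{2}\wedge E_{3}=4096^{4}\,\widetilde{E}_{1}\wedge\widetilde{E}_{2}\wedge\widetilde{E}_{3}\wedge\widetilde{E}_{4}=4096^{4}\,\bigl({\rm Sym}^{3}(X_{1})\wedge\cdots\wedge{\rm Sym}^{3}(X_{4})\bigr)\big/(\Delta_{1}\Delta_{2}\Delta_{3}\Delta_{4})^{2}$. For binary vectors one has the generalised Vandermonde identity ${\rm Sym}^{3}(v_{1})\wedge{\rm Sym}^{3}(v_{2})\wedge{\rm Sym}^{3}(v_{3})\wedge{\rm Sym}^{3}(v_{4})=\kappa\prod_{i<j}(v_{i}\wedge v_{j})$ with $\kappa\in{\CC}^{*}$ universal. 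By relation (7) and its $S_{4}$-translates (listed in Section~\ref{examples}), $X_{i}\wedge X_{j}=\pm\tfrac{c}{4}\,\zeta^{2}(x_{k}-x_{l})$ whenever $\{i,j,k,l\}=\{1,2,3,4\}$, so $\prod_{i<j}(X_{i}\wedge X_{j})=\pm(\tfrac{c}{4})^{6}\zeta^{12}\prod_{k<l}(x_{k}-x_{l})$; also $\Delta_{1}\Delta_{2}\Delta_{3}\Delta_{4}=\prod_{j<k}(x_{j}-x_{k})^{2}$, and by relation (5) $\prod_{k<l}(x_{k}-x_{l})$ is a non-zero multiple of $\zeta^{3}$. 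Substituting, the powers of $\zeta$ collapse and $E_{0}\wedge E_{1}\wedge E_{2}\wedge E_{3}=c_{2}\,\zeta^{3}$ with $c_{2}\in{\CC}^{*}$; note this also re-proves non-triviality of the wedge, so it does not rely on the cusp count. (Alternatively, with the cusp behaviour in hand $E_{0}\wedge\cdots\wedge E_{3}$ is an $S_{4}$-anti-invariant cusp form of weight $18$, and $M_{0,18}(\Gamma[\sqrt{-3}])^{S_{4},{\rm sgn}}=\CC\,\zeta^{3}$, being the space of degree-$6$ alternating polynomials in $x_{1},\dots,x_{4}$ modulo $\sum x_{i}$; this fixes the identity up to a scalar that non-triviality then makes non-zero.) I expect the holomorphy step to be the main obstacle: one must pin down the orders of vanishing of ${\rm Sym}^{3}(\Phi_{0})$ and ${\rm Sym}^{3}(X_{1})$ against those of the denominators along the modular curves $C_{ij}$, where the numerator's order $6$ matches the denominator's $2\cdot 3$ exactly, so the quotient is holomorphic only on the nose.
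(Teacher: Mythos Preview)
Your proof is correct and covers all parts of the lemma. The overall organisation via the $x_i$ and $X_i$ makes the $S_4$-structure transparent, and your holomorphy argument along $C_{34}$ via the Taylor expansions of Section~\ref{sec:restriction} is valid; the alternative you sketch (divisibility via the $\vartheta$-identity and the $S_3$-orbit of $\varphi_1$) is in fact the route the paper takes, mirroring Lemma~\ref{sym2div}.

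The genuine difference is in the wedge identity. The paper establishes it indirectly: once the explicit Fourier--Jacobi expansions of the $E_i$ are computed and the cusp behaviour is known, one sees that $E_0\wedge\cdots\wedge E_3$ is a non-zero $S_4$-anti-invariant element of $M_{0,18}(\Gamma[\sqrt{-3}])$, and that space of anti-invariants is $\CC\,\zeta^3$. You give this as an alternative, but your primary argument is more constructive: the Vandermonde-type identity ${\rm Sym}^3(v_1)\wedge\cdots\wedge{\rm Sym}^3(v_4)=\kappa\prod_{i<j}(v_i\wedge v_j)$ for binary vectors, combined with relation~(7) and its $S_4$-translates to evaluate $\prod_{i<j}(X_i\wedge X_j)$ and relation~(5) to convert $\prod_{k<l}(x_k-x_l)$ into a multiple of $\zeta^3$, yields the result by direct algebraic manipulation. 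This has the advantage that non-vanishing of the wedge (hence linear independence of the $E_i$) does not depend on the cusp computation, whereas the paper's route needs the cusp behaviour first to know the wedge is non-zero. The paper's approach, on the other hand, gives the explicit Fourier--Jacobi expansions as a by-product, which are used later in the Hecke eigenvalue computations.

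One small point: the cusp step in your write-up (``comparing the orders in $w$ of numerator and denominator'') is left as a computation; the paper carries this out explicitly, recording for instance that the $w^1$-coefficient of $E_0$ at the standard cusp is non-zero while $E_3$ has a non-zero constant term there. Since you have the Vandermonde wedge argument available independently, you only need this computation for the ``exactly one $E_i$ is non-zero at each cusp'' clause, not for linear independence.
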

\begin{proof}
Just as above in the proof of Lemma \ref{sym2div} 
we see that ${\rm Sym}^3(F_0)$ is divisible by
$\varphi_1^2\varphi_2^2$ and then using the action of $S_4$ one checks
that it is also divisible by $(\varphi_1-\varphi_2)^2$. The form
$ E_0+E_1+E_2-E_3$ is anti-invariant under $S_4$ and for $i=0,1,2$ 
the forms $4E_i+E_3-(E_0+E_1-E_2)$ generate a $S_4$-representation $s[2,1,1]$.
One calculates the Fourier-Jacobi expansion of these series.
For example one finds for the $4$th component of $E_0$ the expansion
$$
-3^{-5}\left((Y-Z)w+6(Y^2-Z^2)w^2+9(3Y^3+Y^2Z-YZ^2-3Z^3)w^3+\ldots \right)
$$
and the ones of $E_i$ ($i=1,2$) are obtained by substituting 
$(\rho^iY, \rho^{2i}Z)$ for $(Y,Z)$, while the fourth 
component of $E_3$ has an expansion
$$
-3^{-3}\left( (Y^3-Z^3)w^3- 18(Y^4Z-YZ^4)w^5+\ldots \right).
$$
One also checks that the coefficient of $w$ in the Fourier-Jacobi 
expansion of $E_0$
is
$$
-3^{-5} \left(\begin{matrix} (\kappa(Y'-Z')^3/(Y-Z)^2 \\ 
\kappa(Y'-Z')^2/(Y-Z) \\ \kappa (Y'-Z') \\
(Y-Z) \\ \end{matrix} \right)
$$
but that $E_3$ does not vanish at the first cusp.
From these facts the proof can be deduced.
\end{proof}
The $E_i$ are Hecke eigenforms with eigenvalues as given in
formulas (9a) and (9b).

\begin{proposition}
The $M$-module ${\mathcal M}_3^0$ is freely generated by the Eisenstein series
$E_i$ $i=0,1,2,3$.
\end{proposition}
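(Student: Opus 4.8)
The plan is to prove two things: that $E_0,E_1,E_2,E_3$ (which lie in $M_{3,3}(\Gamma[\sqrt{-3}])$) satisfy no non-trivial $M$-linear relation, so that $N:=\bigoplus_{i=0}^3 M\cdot E_i\subseteq{\mathcal M}_3^0$ is a free $M$-module of rank $4$, and that $N$ exhausts ${\mathcal M}_3^0$. The freeness I would argue exactly as in the earlier freeness arguments (cf.\ Section~\ref{structure}): given a relation $\sum_{i=0}^3 f_i E_i=0$ with $f_i\in M$, wedging both sides with $E_j\wedge E_k\wedge E_l$ for the three remaining indices — the exterior product of four ${\rm Sym}^3({\CC}^2)$-valued forms being the $4\times4$ determinant of their components, a scalar-valued form — kills every term with a repeated factor and leaves $f_i\,(E_0\wedge E_1\wedge E_2\wedge E_3)=0$; since $E_0\wedge E_1\wedge E_2\wedge E_3=c_2\,\zeta^3\neq0$ by Lemma~\ref{Eis33} and $M$ is an integral domain, all $f_i=0$. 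Consequently the graded piece of $N$ in weight $(3,3+3k)$ equals $\bigoplus_{i=0}^3 M_{0,3k}(\Gamma[\sqrt{-3}])\,E_i$, which, since $M={\CC}[\varphi_0,\varphi_1,\varphi_2]$ with each $\varphi_i$ of weight $3$, has dimension $4\binom{k+2}{2}=2(k+1)(k+2)$.

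It then remains to verify $N_k=M_{3,3+3k}(\Gamma[\sqrt{-3}])$ for all $k\geq0$, which I would do by comparing dimensions. For $k\geq1$ the formula recalled in Section~\ref{modules} applies with $j=3\not\equiv2\,(\bmod\,3)$ and $\ell=0$, so $\varepsilon=2$, and, with the constant term equal to $4$ by the cohomological calculations of \cite{B-vdG}, one gets $\dim M_{3,3+3k}(\Gamma[\sqrt{-3}])=2k^2+6k+4=2(k+1)(k+2)=\dim N_k$; as $N_k\subseteq M_{3,3+3k}(\Gamma[\sqrt{-3}])$, this forces equality. For the boundary weight $(3,3)$ (the case $k=0$, not covered by that formula) I would instead note that $j+k=6>4$, so the Eisenstein subspace of $M_{3,3}(\Gamma[\sqrt{-3}])$ is $4$-dimensional by the proposition of Section~\ref{eisenstein}, while $S_{3,3}(\Gamma[\sqrt{-3}])=0$ by the dimension data of \cite{B-vdG}; the $E_i$ are linearly independent over ${\CC}$ — each is non-zero at exactly one of the four cusps, by Lemma~\ref{Eis33} — hence they span $M_{3,3}(\Gamma[\sqrt{-3}])$.

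Combining the two parts yields $N={\mathcal M}_3^0$ with the $E_i$ as free generators. The only ingredient external to the argument is the exact dimension of $M_{3,3+3k}(\Gamma[\sqrt{-3}])$ — in particular the value of its constant term and the vanishing of $S_{3,3}(\Gamma[\sqrt{-3}])$ — which is precisely where \cite{B-vdG} enters, and this is the only point I would flag as requiring care; the decisive formal step is the non-vanishing of the determinant $E_0\wedge E_1\wedge E_2\wedge E_3$ established in Lemma~\ref{Eis33}, so I do not anticipate a genuine obstacle beyond the dimension bookkeeping.
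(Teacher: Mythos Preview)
Your argument is correct and follows essentially the same route as the paper: both use the non-vanishing of $E_0\wedge E_1\wedge E_2\wedge E_3$ from Lemma~\ref{Eis33} to exclude relations, and then match the resulting dimension $4\dim M_{0,3k}=2k^2+6k+4$ against the dimension formula for $M_{3,3+3k}(\Gamma[\sqrt{-3}])$. Your explicit separation of the boundary case $k=0$ (via the Eisenstein dimension and $S_{3,3}=0$) is a small refinement---the paper simply asserts the formula $2k^2+6k+4$ without isolating this case---but the substance is identical.
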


We give the irreducible representations in $M_{3,3+3k}(\Gamma[\sqrt{-3}])$ 
for a few values of $k$.

\bigskip
\vbox{
\centerline{\def\quad{\hskip 0.3em\relax}
\vbox{\offinterlineskip
\hrule
\halign{&\vrule#& \quad \hfil#\hfil \strut \quad  \cr
height2pt&\omit&&\omit&&\omit&&\omit&&\omit&&\omit& \cr
& $k$ && $s[4]$ && $s[3,1]$ && $s[2,2]$ && $s[2,1,1]$ && s[1,1,1,1]& \cr
height2pt&\omit&&\omit&&\omit&&\omit&&\omit&&\omit& \cr
\noalign{\hrule}
height2pt&\omit&&\omit&&\omit&&\omit&&\omit&&\omit& \cr
& $-1$ && $0$ && $0$ && $0$ && $0$ && $0$ & \cr
& $0$ && $0$ && $0$ && $0$ && ${\color{green}1}$ && ${\color{green}1}$ & \cr
& $1$ && ${\color{green}1}$ && ${\color{green}1}+{\color{red}1}$ && $1$ && ${\color{red}1}$ && $0$ & \cr
& $2$ && $0$ && $2$ && $2$ && $4$ && $2$ & \cr
& $3$ && $3$ && $6$ && $3$ && $4$ && $1$ & \cr
height2pt&\omit&&\omit&&\omit&&\omit&&\omit&&\omit& \cr
} \hrule}
}}

\begin{proof}
By Lemma \ref{Eis33} there can be no relations between the $E_i$ 
with coefficients in the vector space $M_{k}$. 
Therefore these Eisenstein series generate
a $M$-module with as graded part of weight $3k+3$ 
a vector space $M_{3,3k+3}$ of dimension equal to $4\dim M_{3k}$.
The dimension formula says that the dimension of $M_{3,3k+3}$ equals
$2k^2+6k+4$. This shows that the $E_i$ generate the whole module.
\end{proof}
It is easy to check that the forms 
$$
\begin{aligned}
G_0=& \varphi_2E_1-\varphi_1E_2+ (\varphi_2-\varphi_1)E_3, \quad \\
G_1=& -\varphi_2E_0+\varphi_0E_2 +(\varphi_0-\varphi_2)E_3, \quad \\
G_2=& \varphi_1E_0-\varphi_0E_1+(\varphi_1-\varphi_0)E_3, \quad \\
\end{aligned}
$$
generate a $s[2,1,1]$-representation
in $S_{3,6}(\Gamma[\sqrt{-3}])$. Similarly the forms
$$
\begin{aligned}
H_1=& \varphi_1E_0+(\varphi_0-\varphi_2)E_1-\varphi_1E_2+(\varphi_0-\varphi_2)E_3\\
H_2=& \varphi_2E_0- \varphi_2E_1+(\varphi_0-\varphi_1)E_2+(\varphi_0-\varphi_1)E_3\\
\end{aligned}
$$
generate a representation of type $s[2,2]$ in 
$S_{3,6}(\Gamma[\sqrt{-3}])$.
Finally, the orbit of the form
$$
J_0=3\varphi_1E_0+(\varphi_0+\varphi_2)E_1+3\varphi_1E_2 
+(\varphi_0-2\varphi_1+\varphi_2)E_3
$$
generates a representation of type $s[3,1]$.
Writing the forms in terms of multiples of $E_i$ 
with $0\leq i \leq 3$ with coeffients from $M_6(\Gamma[\sqrt{-3}])$ 
it is obvious that we have the
invariant relation
$$
\varphi_0 G_0+\varphi_1 G_1 +\varphi_2 G_2 =0. \eqno(R4)
$$
We also have an $s[3,1]$-space of relations generated (under the $S_4$-action)
by the following relation between the elements $K_{02}=\varphi_0E_2$, 
$K_{12}=\varphi_1E_2$,
$K_{23}=(\varphi_1-\varphi_0)E_3$ and $K_{13}=(\varphi_2-\varphi_0)E_3$
of $S_{3,6}(\Gamma[\sqrt{-3}])$:
$$
\varphi_1K_{02}-\varphi_0K_{12}-(\varphi_2-\varphi_0)K_{23}+
(\varphi_1-\varphi_0)K_{13}=0 \eqno(R5)
$$
that generates a space of relations of type $s[3,1]$

\begin{theorem}
The $M$-module $\Sigma_3^0$ is generated by the $S_4$-orbits of the forms
$G_0$, $H_1$ and $J_0$. The relations are generated over $M$ by the $S_4$-orbits
of the relations $R4$ and $R5$. 
\end{theorem}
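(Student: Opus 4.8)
The plan is to identify $\Sigma_3^0$ intrinsically as a submodule of the free module ${\mathcal M}_3^0=\bigoplus_{i=0}^{3}M\,E_i$ and then to resolve it cusp by cusp by Koszul complexes. The first step turns the cusp condition into a point condition on the coefficients. Write $F=\sum_{i=0}^{3}f_iE_i$ with $f_i\in M$. By Lemma~\ref{Eis33}, at each of the four cusps $c$ exactly one $E_i$ has a non-zero constant Fourier--Jacobi term; call it $E_{i(c)}$. Since the constant term of a product of Fourier--Jacobi series is the product of the constant terms, the constant term of $F$ at $c$ equals $f_{i(c)}(c)$ times a non-zero vector, and this vanishes precisely when the scalar form $f_{i(c)}$ vanishes at the point $c\in\PP^2$. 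Running over the four cusps yields
$$
\Sigma_3^0=\bigoplus_{i=0}^{3}I(c_i)\,E_i\ \subset\ \bigoplus_{i=0}^{3}M\,E_i,
$$
where $c_i\in\PP^2$ is the cusp at which $E_i$ is non-zero and $I(c_i)\subset M=\CC[\varphi_0,\varphi_1,\varphi_2]$ its homogeneous ideal. In the $\varphi$-coordinates the cusps are $(1:1:1),(1:0:0),(0:1:0),(0:0:1)$ (matched with the $E_i$ via the forms $X_i$ of Section~\ref{examples}), so each $I(c_i)$ is generated by two of the linear forms $\varphi_0,\varphi_1,\varphi_2,\varphi_0-\varphi_1,\varphi_0-\varphi_2$, which form a regular sequence in the polynomial ring $M$.

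Next I would read off a free resolution. Each $I(c_i)=(\ell_1^{(i)},\ell_2^{(i)})$ is a complete intersection of two linear forms, so $0\to M(-6)\to M(-3)^2\to I(c_i)\to 0$ is exact; tensoring by $E_i$ (a shift by the weight of $E_i$) and summing over $i$ gives
$$
0\longrightarrow M(-9)^{\oplus 4}\longrightarrow M(-6)^{\oplus 8}\longrightarrow \Sigma_3^0\longrightarrow 0.
$$
Hence $\Sigma_3^0$ is generated by the eight weight-$(3,6)$ cusp forms $\ell_r^{(i)}E_i$ ($r=1,2$) --- among them the forms $K_{02},K_{12},K_{23},K_{13}$ --- and these span the weight-$(3,6)$ piece $S_{3,6}(\Gamma[\sqrt{-3}])$, which is therefore $8$-dimensional. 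Moreover the relation module is free of rank $4$, generated in weight $(3,9)$ by the four Koszul relations $\ell_2^{(i)}(\ell_1^{(i)}E_i)-\ell_1^{(i)}(\ell_2^{(i)}E_i)=0$, one for each cusp.

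It remains to transfer this description to the stated generators and relations. The forms $G_0,G_1,G_2$, $H_1,H_2$ and the $S_4$-orbit of $J_0$ lie in $S_{3,6}(\Gamma[\sqrt{-3}])$ --- written in terms of the $E_i$, the coefficient of each $E_i$ vanishes at $c_i$ --- and span the pairwise non-isomorphic $S_4$-irreducibles $s[2,1,1]$, $s[2,2]$, $s[3,1]$; the eight of them are therefore linearly independent, hence another basis of $S_{3,6}(\Gamma[\sqrt{-3}])$, and so generate $\Sigma_3^0$ over $M$. Since the rank-$4$ relation module is generated by its weight-$(3,9)$ part, which is $4$-dimensional, it is enough to exhibit four linearly independent genuine relations in weight $(3,9)$. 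The relation $R4$, namely $\varphi_0G_0+\varphi_1G_1+\varphi_2G_2=0$, holds (expand the $G_i$ in the $E_i$) and is $S_4$-invariant, so its orbit is a copy of $s[4]$; the relation $R5$ holds identically once the $K$'s are written in the $E_i$ --- it is the difference of the Koszul relations at the two cusps where $E_2$, resp.\ $E_3$, is non-zero --- and its orbit is a copy of $s[3,1]$. As $s[4]$ and $s[3,1]$ are non-isomorphic, the orbits of $R4$ and $R5$ span a $4$-dimensional space, which is therefore all of the weight-$(3,9)$ relations; hence they generate the whole relation module, and the theorem follows.

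The step I expect to be the main obstacle is the first one: matching each $E_i$ with its cusp --- via the Fourier--Jacobi expansions of Lemma~\ref{Eis33} and the forms $X_i$ --- so that the ideals $I(c_i)$ really are the intended pairs of linear forms, and checking from those expansions that $G_i,H_j,J_0$ and $K_{02},K_{12},K_{23},K_{13}$ are indeed the claimed bases of $S_{3,6}(\Gamma[\sqrt{-3}])$. Once that bookkeeping is in place, the rest is formal: the Koszul resolution of a point of $\PP^2$, a finite change of basis on $S_{3,6}(\Gamma[\sqrt{-3}])$, and the two identity verifications for $R4$ and $R5$.
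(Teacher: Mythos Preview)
Your argument is correct and shares its starting point with the paper: since ${\mathcal M}_3^0$ is free on the $E_i$ and, by Lemma~\ref{Eis33}, exactly one $E_i$ has a non-zero constant Fourier--Jacobi term at each cusp, a form $\sum f_iE_i$ is cuspidal precisely when each coefficient $f_i$ vanishes at the corresponding cusp, so $\Sigma_3^0=\bigoplus_i I(c_i)E_i$. The paper's proof stops essentially here: it quotes the dimension formula $\dim S_{3,3k+3}=2k^2+6k$ from \cite{B-vdG} and declares that the generation by the $G_i$, $H_i$ and the orbit of $J_0$ is ``easy to see'', without addressing the relation claim at all.

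Your Koszul approach is a genuine refinement. Resolving each point ideal $I(c_i)$ by $0\to M(-6)\to M(-3)^2\to I(c_i)\to 0$ and summing gives $0\to M(-9)^{\oplus 4}\to M(-6)^{\oplus 8}\to\Sigma_3^0\to 0$, which \emph{derives} the dimension formula rather than importing it, and simultaneously identifies the relation module as free of rank~$4$ concentrated in weight $(3,9)$. After that, your representation-theoretic bookkeeping --- the eight forms in the $G$, $H$, $J$-orbits span pairwise non-isomorphic irreducibles $s[2,1,1]\oplus s[2,2]\oplus s[3,1]$ of $S_{3,6}$, and $R4$ (type $s[4]$) together with the orbit of $R5$ (the difference of two of the four Koszul relations, hence type $s[3,1]$) fill the four-dimensional space of weight-$(3,9)$ relations --- cleanly finishes what the paper leaves implicit. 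The cusp--$E_i$ matching you flag as the main bookkeeping point is exactly as you describe; your reading of $R5$ as a difference of Koszul relations checks out directly from the definitions of the $K_{ij}$.
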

\begin{proof}
We know that the dimension of $M_{3,3k+3}=2k^2+6k+4$ for $k\geq 0$,
hence $\dim S_{3,3k+3}=2k^2+6k$. Any element of $S_{3,3k+3}$ can be written as
a linear combination $\sum_{i=0}^3 \epsilon_i E_i$ with $\epsilon_i \in M_{3k}$
a modular form that vanishes in the cusp where $E_i$ does not vanish. It is
easy to see that these are generated by the $G_i$, $H_i$ and the orbit of 
$J_0$.
\end{proof}
\end{section}
\begin{section}{Eigenvalues of Hecke operators}
In this section we shall give a number of eigenvalues for
vector-valued modular forms. A number of modular forms we encountered 
are lifts from $U(1)$ or ${\rm GL}(2)$. In general there are 
Kudla lifts (\cite{K1,K2})
$$
S_{b+2}(\Gamma_1(3^?)) \to S_{a,b+3}(\Gamma[\sqrt{-3}],\det{}^{\ell})
$$
with $\ell\equiv a (\bmod \, 3)$. Another type of lifts is given in \cite{Rog}
$$
S_{a+b+3}(\Gamma_1(3)) \to S_{a,b+3}(\Gamma[\sqrt{-3}],\det{}^{2})
$$
In the first case the eigenvalues for $T_{\nu}$ are $a(p) \nu^{a+1} +\bar{\nu}^{a+b+2}$ and in the second case they are of the form $a(p)+\nu^{a+1}\bar{\nu}^{b+1}$.
In all tables below the cusp form is not a lift and 
corresponds to a $3$-dimensional Galois 
representation.

\begin{subsection}{Eigenforms in $\Sigma_1^0$}

\begin{example}
We consider the eigenforms $\Phi_i \in S_{1,7}(\Gamma[\sqrt{-3}])$
with representation $s[2,1,1]$. We give a table of eigenvalues.

\begin{footnotesize}
\smallskip
\vbox{
\bigskip\centerline{\def\quad{\hskip 0.6em\relax}
\def\quod{\hskip 0.5em\relax }
\vbox{\offinterlineskip
\hrule
\halign{&\vrule#&\strut\quod\hfil#\quad\cr
height2pt&\omit&&\omit&&\omit&\cr
&$\alpha$ && $p$ && $\lambda_{\alpha}(\Phi_i)$ &\cr
\noalign{\hrule}
&$1+3\rho$&& $7$ && $759+261\rho$ & \cr
&$1-3\rho$&& $13$ && $-4137+1683\rho$ & \cr
& $-2+3\rho$ && $19$ && $24042+14733\rho$ & \cr
& $1+6\rho$ && $31$ && $-145401-241830\rho$ & \cr
& $4-3\rho$ && $37$ && $12900-114849\rho$  & \cr
& $1-6\rho$ && $43$ && $246567-8946\rho$  & \cr
& $4+9\rho$ && $61$ && $1048836-173205\rho$ & \cr
& $-2-9\rho$ && $67$ && $-1539510-1246887\rho$ & \cr
& $1+9\rho$ && $73$ && $-1563729+1261143\rho$  & \cr
& $7-3\rho$ && $79$ && $9921297+3294171\rho$  & \cr
& $-8+3\rho$ && $97$ && $5678616-3870891\rho$  & \cr
& $-2$ && $2$ && $72$ & \cr
& $-5$ && $5$ && $89622$ & \cr
} \hrule}
}}
\end{footnotesize}

\end{example}

\begin{example}
The space $S_{1,10}(\Gamma[\sqrt{-3}])$ has a decomposition
$s[3,1]\oplus s[2,2]\oplus s[2,1,1]$ as $S_4$-representation.
In the $s[3,1]$-space  and $s[2,2]$-space there are eigenforms
with eigenvalues

\begin{footnotesize}
\smallskip
\vbox{
\bigskip\centerline{\def\quad{\hskip 0.6em\relax}
\def\quod{\hskip 0.5em\relax }
\vbox{\offinterlineskip
\hrule
\halign{&\vrule#&\strut\quod\hfil#\quad\cr
height2pt&\omit&&\omit&&\omit&\cr
&$p$ && $s[3,1]$ && $s[2,2]$ &\cr
\noalign{\hrule}
& $7$ && $-13515+3573\rho$  && $15159+10863\rho$ & \cr
& $13$ && $-321963-290475\rho$ && $-95001+288351\rho$ & \cr
& $19$ && $2154864+1895139\rho$ && $-2977296-681147\rho$ & \cr
& $31$ && $-4371693-1547568\rho$ && $-24682119-22711896\rho$ & \cr
& $37$ && $-13227720-83952837\rho$ && $-76866504+46681047\rho$ & \cr
& $43$ && $108861123-48030912\rho$ && $32373957+31482576\rho$  & \cr
& $61$ && $1122962232+554059467\rho$ && $465758040+641801907\rho$  & \cr
& $67$ && $878127888+1196423595\rho$  && $-211962336+187424901\rho$ & \cr
& $73$ && $-1637757627-2807114427\rho$  && $3493044975+565725087\rho$  & \cr
& $79$ && $-504410811-607778811\rho$   && $-3018458193-2809124073\rho$ & \cr
& $97$ && $-23598528-7910853813\rho$   && $-6587510640-8420338791\rho$  & \cr
& $-2$  && $36$  && $1008$ & \cr
& $-5$  && $13464990$ && $-11930940$  & \cr
} \hrule}
}}
\end{footnotesize}

In the $s[2,1,1]$-part we find an eigenform
with eigenvalues

\begin{footnotesize}
\smallskip
\vbox{
\bigskip\centerline{\def\quad{\hskip 0.6em\relax}
\def\quod{\hskip 0.5em\relax }
\vbox{\offinterlineskip
\hrule
\halign{&\vrule#&\strut\quod\hfil#\quad\cr
height2pt&\omit&&\omit&&\omit&\cr
&$\alpha$ && $p$ && $s[2,1,1]$ &\cr
\noalign{\hrule}
&$1+3\rho$&& $7$ && $26985+20097\rho$ & \cr
&$1-3\rho$&& $13$ && $31521+13761\rho$ & \cr
& $-2+3\rho$ && $19$ && $1806888+842463\rho$ & \cr
& $1+6\rho$ && $31$ && $15911679+12552264\rho$ & \cr
& $4-3\rho$ && $37$ && $81911640+71598267\rho$  & \cr
& $1-6\rho$ && $43$ && $47737551-26870472\rho$  & \cr
& $4+9\rho$ && $61$ && $524111736+375028731\rho$ & \cr
& $-2-9\rho$ && $67$ && $489305208-4044033\rho$ & \cr
& $1+9\rho$ && $73$ && $-513904983+1961971497\rho$  & \cr
& $7-3\rho$ && $79$ && $-203501319-3483886959\rho$  & \cr
& $-8+3\rho$ && $97$ && $4237830912-1749247641\rho$  & \cr
& $-2$ && $2$ && $-1548$ & \cr
& $-5$ && $5$ && $-1356390$ & \cr
} \hrule}
}}
\end{footnotesize}

\end{example}

\end{subsection}

\begin{subsection}{Eigenforms in $\Sigma_1^1$}

\begin{example}
The space $S_{1,7}(\Gamma[\sqrt{-3}],\det{})$ equals $s[2,2]\oplus s[2,1,1]$
as a representation of $S_4$. An eigenform $F$ in the $s[2,1,1]$-space  is a
lift, so
for a prime $p\equiv  1 (\bmod \, 3)$ and $\nu \in O_F$ with
$\nu\equiv 1 (\bmod \, 3)$ of norm $p$, the eigenvalue
$\lambda_{\nu}$ satisfies 
$\lambda_{\nu}= a_p \, \nu^2+\bar{\nu}^7$
with $a_p$ the eigenvalue of $(\eta(3\tau)\eta(\tau))^6 \in S_6(\Gamma_0(3))$.
An eigenform in the $s[2,2]$-space 
has eigenvalues

\begin{footnotesize}
\smallskip
\vbox{
\bigskip\centerline{\def\quad{\hskip 0.6em\relax}
\def\quod{\hskip 0.5em\relax }
\vbox{\offinterlineskip
\hrule
\halign{&\vrule#&\strut\quod\hfil#\quad\cr
height2pt&\omit&&\omit&&\omit&\cr
&$\alpha$ && $p$ && $\lambda_{\alpha}(F)$ &\cr
\noalign{\hrule}
&$1+3\rho$&& $7$ && $-294+855\rho$ & \cr
&$1-3\rho$&& $13$ && $-2220-1017\rho$ & \cr
& $-2+3\rho$ && $19$ && $5817-5841\rho$ & \cr
& $1+6\rho$ && $31$ && $-23847-38466\rho$ & \cr
& $4-3\rho$ && $37$ && $152301-21375\rho$  & \cr
& $1-6\rho$ && $43$ && $-188403-18558\rho$  & \cr
& $-2$ && $2$ && $180$ & \cr
& $-5$ && $5$ && $82764$ & \cr
} \hrule}
}}
\end{footnotesize}

\end{example}

\begin{example}
The space $S_{1,10}(\Gamma[\sqrt{-3},\det{})$ decomposes as 
$s[4]\oplus 2\, s[3,1]\oplus s[2,2]\oplus s[2,1,1]$. 
The $S_4$-invariant eigenform 
is a lift of a form $g\in S_9(\Gamma_1(9))$ with $g=q+45\, q^3-284 \, q^4+1512 \, q^6 +\ldots$. The eigenform in the $s[2,2]$-space is a lift
of a form $g \in S_9(\Gamma_0(9),\psi)$ with $q$-expansion
$q+238\, q^4+1652 \, q^7 -4194 \, q^{10}+\ldots $. 
In the $s[2,1,1]$ we find an eigenform with eigenvalues as in the following table

\begin{footnotesize}
\smallskip
\vbox{
\bigskip\centerline{\def\quad{\hskip 0.6em\relax}
\def\quod{\hskip 0.5em\relax }
\vbox{\offinterlineskip
\hrule
\halign{&\vrule#&\strut\quod\hfil#\quad\cr
height2pt&\omit&&\omit&&\omit&\cr
&$\alpha$ && $p$ && $\lambda_{\alpha}(F_1)$ &\cr
\noalign{\hrule}
&$1+3\rho$&& $7$ && $-19320-7497\rho$ & \cr
&$1-3\rho$&& $13$ && $74208-298521\rho$ & \cr
& $-2+3\rho$ && $19$ && $877737+798561\rho$ & \cr
& $1+6\rho$ && $31$ && $10127631+22554360\rho$ & \cr
& $4-3\rho$ && $37$ && $-80206539-23638131\rho$  & \cr
& $1-6\rho$ && $43$ && $-113882937+15496200\rho$  & \cr
& $-2$ && $2$ && $-36$ & \cr
& $-5$ && $5$ && $1289610$ & \cr
} \hrule}
}}
\end{footnotesize}

\end{example}

\end{subsection}

\begin{subsection}{Eigenforms in $\Sigma_1^2$}
\begin{example}
The space $S_{1,7}(\Gamma[\sqrt{-3}],\det{}^2)$ is $1$-dimensional
and $S_4$-invariant. The generator  $\Psi_1$ is a lift 
of an element in $S_8(\Gamma_0(9))$.
The space $S_{1,10}(\Gamma(\sqrt{-3}),{\rm det}^2)$ decomposes as
$s[1,1,1,1]\oplus s[2,1,1]$,
where the component $s[1,1,1,1]$ is generated by the form $\Psi_2$ and the
component $s[2,1,1]$ is generated by the forms $\varphi_i\Psi_1$. 
All forms in $S_{1,10}(\Gamma(\sqrt{-3}),{\rm det}^2)$
have the same eigenvalues given in the next table.

\begin{footnotesize}
\smallskip
\vbox{
\bigskip\centerline{\def\quad{\hskip 0.6em\relax}
\def\quod{\hskip 0.5em\relax }
\vbox{\offinterlineskip
\hrule
\halign{&\vrule#&\strut\quod\hfil#\quad\cr
height2pt&\omit&&\omit&&\omit&\cr
&$\nu$ && $p$ && $\lambda_{\nu}(\Psi_2)$  & \cr
\noalign{\hrule}
&$1+3\rho$&& $7$ && $-6549-17352\rho$   & \cr
&$1-3\rho$&& $13$ && $223599+133992\rho$  &\cr
& $-2+3\rho$ && $19$ && $-492621-1294560\rho$  & \cr
& $1+6\rho$ && $31$ && $3832419+8618040\rho$ & \cr
& $4-3\rho$ && $37$ && $56905563+49705992\rho$  & \cr
& $1-6\rho$ && $43$ && $16590459+186818112\rho$  & \cr
& $-2$ && $2$ && $-684$ & \cr
& $-5$ && $5$ && $6541650$ & \cr
} \hrule}
}}
\end{footnotesize}

\end{example}
\end{subsection}

\begin{subsection}{Eigenforms in $\Sigma_2^0$}

\begin{example}
We give the Hecke eigenvalues for the $D_i$ of weight $(2,5)$ and of
the forms in the $s[2,1,1]$-part of $S_{2,8}(\Gamma[\sqrt{-3}])$.

\begin{footnotesize}
\smallskip
\vbox{
\bigskip\centerline{\def\quad{\hskip 0.6em\relax}
\def\quod{\hskip 0.5em\relax }
\vbox{\offinterlineskip
\hrule
\halign{&\vrule#&\strut\quod\hfil#\quad\cr
height2pt&\omit&&\omit&&\omit&&\omit&\cr
&$\nu$ && $p$ && weight (2,5), irrep s[3,1] && weight (2,8), irrep s[2,1,1] & \cr
\noalign{\hrule}
&$1+3\rho$&& $7$ && $-105-297\rho$ && $-3039-765\rho$  & \cr
&$1-3\rho$&& $13$ && $1137+945\rho$ && $97707+110007\rho$ &\cr
& $-2+3\rho$ && $19$ && $-1536-891\rho$ && $-268962-412137\rho$ & \cr
& $1+6\rho$ && $31$  && $20577-1728\rho$ && $4182969+2591334\rho$  & \cr
& $4-3\rho$ && $37$ && $19200-37017\rho$  && $-61836+6730299\rho$  & \cr
& $1-6\rho$ && $43$  && $-113667-127872\rho$ && $13604205-6584742\rho$ & \cr
& $4+9\rho$ && $61$  && $-354048-242433\rho$ && $-67731468-1452033\rho$  & \cr
& $-2-9\rho$ && $67$  && $271488+194805\rho$ && $-45800610+117273771\rho$ & \cr
& $1+9\rho$ && $73$ && $-268107-235467\rho$ && $-373673625-459690417\rho$ & \cr
& $7-3\rho$ && $79$ && $114159+449199\rho$ && $235630047+382294197\rho$ & \cr
& $-8+3\rho$ && $97$  && $-60288-554013\rho$ && $-95419824162-26086979421\rho$ & \cr
& $-2$ && $2$ && $-72$ && $-288$ & \cr
& $-5$ && $5$  && $-810$ && $1629990$ & \cr
} \hrule}
}}
\end{footnotesize}

\end{example}
\end{subsection}

\begin{subsection}{Eigenforms in $\Sigma_2^2$}
\begin{example}
The space $S_{2,8}(\Gamma[\sqrt{-3}],{\rm det}^2)$ splits as 
$s[4]\oplus s[3,1]\oplus s[2,2]$.
The $S_4$-invariant form is a lift of $U(1)$ with eigenvalues 
$\nu^9+\nu^3\bar{\nu}^6+\bar{\nu}^9$. The $s[2,2]$-forms are lifts 
from $S_7(\Gamma_1(3))$.
We give the eigenvalues for the $s[3,1]$-part of 
$S_{2,8}(\Gamma[\sqrt{-3}],{\rm det}^2)$.

\begin{footnotesize}
\smallskip
\vbox{
\bigskip\centerline{\def\quad{\hskip 0.6em\relax}
\def\quod{\hskip 0.5em\relax }
\vbox{\offinterlineskip
\hrule
\halign{&\vrule#&\strut\quod\hfil#\quad\cr
height2pt&\omit&&\omit&&\omit&\cr
&$\nu$ && $p$ &&  weight (2,8), irrep s[3,1] & \cr
\noalign{\hrule}
&$1+3\rho$&& $7$ && $-2175-1602\rho$   & \cr
&$1-3\rho$&& $13$ && $-58947-169740\rho$  &\cr
& $-2+3\rho$ && $19$ && $737949-220734\rho$  & \cr
& $1+6\rho$ && $31$  && $-90267+2362374\rho$   & \cr
& $4-3\rho$ && $37$ && $-8035881-17655156\rho$    & \cr
& $1-6\rho$ && $43$  && $6838329-67590\rho$  & \cr
& $-2$ && $2$ && $792$ & \cr
& $-5$ && $5$  && $-408510$ & \cr
} \hrule}
}}
\end{footnotesize}

\end{example}
\end{subsection}

\begin{subsection}{Eigenforms in $\Sigma_3^0$}

\begin{example}
We give the eigenvalues for the $s[2,1,1]$-part and the $s[3,1]$-part of
$S_{3,6}(\Gamma[\sqrt{-3}])$.

\begin{footnotesize}
\smallskip
\vbox{
\bigskip\centerline{\def\quad{\hskip 0.6em\relax}
\def\quod{\hskip 0.5em\relax }
\vbox{\offinterlineskip
\hrule
\halign{&\vrule#&\strut\quod\hfil#\quad\cr
height2pt&\omit&&\omit&&\omit&&\omit&  \cr
& $\alpha$ && $p$ && $s[2,1,1]$ && $s[3,1]$  &  \cr
\noalign{\hrule}
& $1+3\rho$&& $7$ && $3189-459\rho$  && $273+2457 \rho$ & \cr
& $1-3\rho$&& $13$ && $ -3543+8721\rho$ && $-35619-46683\rho$ & \cr
& $-2+3\rho$ && $19$ && $29784+118179\rho$ && $152256-30537\rho$  & \cr
& $1+6\rho$ && $31$ && $-29949-203904\rho$ && $-167001-547992\rho$ & \cr
& $4-3\rho$ && $37$ && $355296+8667\rho$  && $1545024+1338363\rho$ & \cr
& $1-6\rho$ && $43$ && $-66741-241272\rho$  && $2292303+207792\rho$ & \cr
& $4+9\rho$ && $61$ && $-99168+3835107\rho$ && $3969904-118989\rho$ & \cr
& $-2-9\rho$ && $67$ && $-5321544+13554459\rho$ && $11562096+21366423\rho$ & \cr
& $1+9\rho$ && $73$ && $-58317351-62040087\rho$ && $9680853+35351397\rho$ & \cr
& $7-3\rho$ && $79$ && $-44663451-34446411\rho$ && $1196481+1434969\rho$ & \cr
& $-8+3\rho$ && $97$ && $-52988496+9813663\rho$ && $2112240+65593827\rho$ & \cr
& $-2$ && $2$ && $-36$ && $-36$ & \cr
& $-5$ && $5$ && $-563670$ && $-117522$ & \cr
} \hrule}
}}
\end{footnotesize}

\end{example}
\end{subsection}

\end{section}

\end{document}